\newcommand{\nabh}{\nabla_{\! h}}
\newcommand{\bfphi}{\mbox{\boldmath$\phi$}}
\newcommand{\hf}{\nicefrac{1}{2}}
\newcommand{\nrm}[1]{\left\| #1 \right\|}
\newcommand{\ciptwo}[2]{\left\langle #1 , #2 \right\rangle_\Omega}
\newcommand{\cipgen}[3]{\left\langle #1 , #2 \right\rangle_{#3}}
\newcommand\dt {{\Delta t}}
\newcommand{\eipx}[2]{\left[ #1 , #2 \right]_{\rm x}}
\newcommand{\eipy}[2]{\left[ #1 , #2 \right]_{\rm y}}
\newcommand{\eipz}[2]{\left[ #1 , #2 \right]_{\rm z}}
\newcommand{\eipvec}[2]{\left[ #1 , #2 \right]_{\Omega}}
\newtheorem{rem}{Remark}[section]
\newtheorem{thm}{Theorem}[section]
\newtheorem{lm}{Lemma}[section]
\begin{document}
\title{Positivity-preserving, energy stable numerical schemes for the
Cahn-Hilliard equation with logarithmic potential}

\author{
Wenbin Chen\thanks{
Shanghai Key Laboratory for Contemporary Applied Mathematics,
School of Mathematical Sciences; Fudan University, Shanghai, China 200433 ({\tt wbchen@fudan.edu.cn})}
\and
Cheng Wang\thanks{Mathematics Department; University of Massachusetts; North Dartmouth, MA 02747, USA ({\tt corresponding author: cwang1@umassd.edu})}
\and
Xiaoming Wang\thanks{Fudan University, Shanghai, China 200433, and Florida State University, Tallahassee, FL 32306, USA ({\tt wxm@math.fsu.edu})}
\and
Steven M. Wise\thanks{Mathematics Department; University of Tennessee; Knoxville, TN 37996, USA ({\tt swise1@utk.edu})}
			}


\maketitle

	\begin{abstract}
In this paper we present and analyze finite difference numerical schemes for the Allen Cahn/Cahn-Hilliard equation with a logarithmic Flory Huggins energy potential. Both the first order and second order accurate temporal algorithms are considered. In the first order scheme, we treat the nonlinear logarithmic terms and the surface diffusion term implicitly, and update the linear expansive term and the mobility explicitly. We provide a theoretical justification that, this numerical algorithm has a unique solution such that the positivity is always preserved for the logarithmic arguments, i.e., the phase variable is always between $-1$ and 1, at a point-wise level. In particular, our analysis reveals a subtle fact: the singular nature of the logarithmic term around the values of $-1$ and 1 prevents the numerical solution reaching these singular values, so that the numerical scheme is always well-defined as long as the numerical solution stays similarly bounded at the previous time step. Furthermore, an unconditional energy stability of the numerical scheme is derived, without any restriction for the time step size. Such an analysis technique could also be applied to a second order numerical scheme, in which the BDF temporal stencil is applied, the expansive term is updated by a second order Adams-Bashforth explicit extrapolation formula, and an artificial Douglas-Dupont regularization term is added to improve the stability property. The unique solvability and the positivity-preserving property for the second order scheme are proved using similar ideas, in which the singular nature of the logarithmic term plays an essential role. For both the first and second order accurate schemes, we are able to derive an optimal rate convergence analysis, which gives the full order error estimate. The case with a non-constant mobility is analyzed as well. We also describe a practical and efficient multigrid solver for the proposed numerical schemes, and present some numerical results, which demonstrate the robustness of the numerical schemes.
	\end{abstract}

	\begin{keywords}
Cahn-Hilliard equation, logarithmic Flory Huggins energy potential, positivity preserving, energy stability, second order BDF scheme, optimal rate convergence analysis
	\end{keywords}

	\begin{AMS}
35K35, 35K55, 49J40, 65K10, 65M06, 65M12
	\end{AMS}

\pagestyle{myheadings}
\thispagestyle{plain}
\markboth{W.~CHEN, C.~WANG, X.~WANG AND S.M.~WISE}{POSITIVE, ENERGY STABLE SCHEMES FOR THE CH EQUATION}

	\section{Introduction}
The well-known Allen-Cahn (AC)~\cite{allen79} and Cahn-Hilliard (CH)~\cite{cahn58} equations are prototypical gradient flows with respect to a given free energy. We consider a bounded domain $\Omega \subset \mathbb{R}^d$ (with $d=2$ or $d=3$).  For any $\phi \in H^1 (\Omega)$, with a point-wise bound, $-1 < \phi < 1$, the energy functional is given by
	\begin{equation}
	\label{CH energy}
E(\phi)=\int_{\Omega}\left( ( 1+ \phi) \ln (1+\phi) + (1-\phi) \ln (1-\phi) - \frac{\theta_0}{2} \phi^2 +\frac{\varepsilon^2}{2}|\nabla \phi|^2\right) d {\bf x} ,
	\end{equation}
where $\varepsilon$, $\theta_0$ are positive constants associated with the diffuse interface width. See \cite{cahn1996, elliott92a, doi13, elliott96b}. The AC and CH equations are precisely the $L^2$ (non-conserved) and $H^{-1}$ (conserved) gradient flows of the energy functional (\ref{CH energy}), respectively,
	\begin{equation}
\partial_t \phi = - {\cal M} (\phi) \mu , \qquad \mbox{(Allen-Cahn)}    	
	\label{AC equation-0}
	\end{equation}
and
	\begin{equation}
\partial_t\phi = \nabla \cdot ( {\cal M} (\phi) \nabla \mu ) ,  \qquad \mbox{(Cahn-Hilliard)}
	\label{CH equation-0}
	\end{equation}
where $\mu$ is the chemical potential
	\begin{equation}
\mu := \delta_\phi E = \ln (1+\phi) - \ln (1-\phi) - \theta_0 \phi - \varepsilon^2 \Delta \phi ,
	\label{CH-mu-0}
	\end{equation}
and ${\cal M} (\phi) >0$ is the mobility function. In a related example, Cahn, et al.~\cite{cahn1996} have studied the Cahn-Hilliard equation with the fully degenerate mobility, $\mathcal{M} (\phi) = (1-\phi)(1+\phi)$, and have shown asymptotic convergence to a geometric model for motion by the surface Laplacian of mean curvature.

For simplicity of presentation, we suppose $\Omega$ is a cuboid and consider periodic boundary conditions. The case with homogeneous Neumann boundary condition could be analyzed in a similar manner. Due to the gradient structure of \eqref{AC equation-0} and \eqref{CH equation-0}, the following energy dissipation laws formally hold:
	\begin{align}
\frac{d}{dt} E(\phi(t))= & \ -\int_{\Omega} {\cal M} (\phi) |\mu|^2 d {\bf x} \le 0  \quad \mbox{(AC equation)} ,
	\label{energy-decay-rate-AC}	
	\\
\frac{d}{dt} E(\phi(t))= & \ -\int_{\Omega} {\cal M} (\phi) |\nabla \mu |^2 d {\bf x} \le 0  \quad \mbox{(CH equation)} .
	\label{energy-decay-rate-CH}
	\end{align}

The free energy with the logarithmic potential is often considered to be more physically realistic than that with a polynomial free energy, because the former can be derived from regular or ideal solution theories~\cite{doi13}. However, one well-known difficulty for the analysis of these models with logarithmic Flory Huggins energy potential -- as it is called in the polymer science community~\cite{doi13} -- is associated with the singularity as the phase variable approaches $-1$ or $1$. PDE solutions are expected to satisfy a \emph{positivity property}, specifically,
	\begin{equation}
0 < 1-\phi \quad \mbox{and} \quad  0 < 1+\phi .	
	\end{equation}
In other words, the phase variable remains in the interval $(-1, 1)$, in a point-wise sense~\cite{elliott96b}. However, it is a major challenge to create numerical schemes that mimic this property. To avoid such a subtle challenge, many efforts have been devoted to a polynomial approximation:
	\begin{equation}
E(\phi)=\int_{\Omega}\left( \frac14 (\phi^2 -1)^2 + \frac{\varepsilon^2}{2}|\nabla \phi|^2\right) d {\bf x} ,
	\label{CH energy-polynomial-0}
	\end{equation}
which leads to the nonlinear, but non-singular, chemical potential
	\begin{equation}
  \mu := \delta_\phi E = \phi^3 - \phi- \varepsilon^2 \Delta \phi .
	\label{CH-polynomial-mu-0}
  	\end{equation}
This model has a similar double-well structure as in the  case~\eqref{CH energy} and \eqref{CH-mu-0}, but avoids the singularities as the phase variable approaches $-1$ or $1$. Meanwhile, the PDE solution may go beyond the given interval of $(-1, 1)$. There have been extensive numerical works for the Cahn-Hilliard equation with the polynomial approximation~\eqref{CH energy-polynomial-0}, \eqref{CH-polynomial-mu-0}; see the related references~\cite{christlieb14, du91, elliott93, furihata01, guillen14, LiD2016b, LiD2016a, wu14}, et cetera.

In this article we focus on the Cahn-Hilliard model with logarithmic Flory Huggins energy potential~\eqref{CH energy}. At the PDE level, the positivity property (for both logarithmic arguments, $1 + \phi$ and $1 - \phi$) has been established in~\cite{abels07, debussche95, elliott96b, miranville04}. Moreover, in the 1-D and 2-D cases, the phase separation has also been justified at a theoretical level, i.e., a uniform distance between the phase variable and the singular limit values ($-1$ and $1$) have been derived, dependent on $\varepsilon$, $\theta_0$ and the initial data. The analysis for the degenerate mobility case could be found in~\cite{barrett99, elliott96b}. In addition, an improved analysis for the 2-D equation has been reported in a more recent work~\cite{Giorgini17a}; also see the related references~\cite{miranville11, miranville12}. An extension to the Cahn-Hilliard model coupled with fluid flow is also discussed in~\cite{abels09b, Giorgini17}.

At the level of numerical scheme design, the positivity preserving property is very challenging, due to the particularities of the spatial and temporal discretizations  involved. There have been extensive numerical works for the CH model with Flory Huggins energy potential~\cite{jeong16, jeong15, LiH2017, LiX16, peng17a, peng17b, yang17c}, while a theoretical justification to assure the positivity of $1+\phi$ and $1-\phi$ has not been available (so that the numerical scheme is unconditionally well-defined). Among the existing literature, it is worth mentioning the numerical analysis to theoretically justify this issue in~\cite{elliott92a}. The authors analyzed the implicit Euler scheme applied to the CH equation~\eqref{CH equation-0}, \eqref{CH-mu-0}, combined with the finite element approximation in space. In more details, the following result was proved: {\em Under the condition that $\dt \le \frac{4 \varepsilon^2}{\theta_0^2}$, and the initial data satisfy $\frac{1}{| \Omega |} \left| \int_\Omega \, \phi_0 ({\bf x}) \, d {\bf x} \right| < 1 - \delta$, $\| \phi_0 \|_\infty \le 1$, then there is a unique numerical solution for the fully implicit Euler scheme, satisfying $\| \phi^n \|_\infty < 1$.}  An extension to the multi-component Cahn-Hilliard flow has also been reported in~\cite{elliott96c}.

Meanwhile, it is observed that, an energy stability property is not unconditionally available for the scheme studied in~\cite{elliott92a}, due to the implicit treatment of the concave diffusion term. Further, the time step constraint, $\dt \le \frac{4 \varepsilon^2}{\theta_0^2}$, could make the numerical implementation challenging for small $\varepsilon$ and large $\theta_0$. In this article, we propose and analyze an alternate numerical scheme, in which the  implicit treatment for the concave diffusion term is replaced by an explicit one, combined with centered difference discretization in space. Again, the implicit treatment for the nonlinear logarithmic term requires a theoretical justification for the positivity of both $1+\phi$ and $1-\phi$, so that the numerical scheme is well-defined at a point-wise level. Instead of reconstructing an alternate energy functional to avoid the singularity for $\phi$ at $-1$ and $1$, as reported in~\cite{elliott96c, elliott92a}, we use a new technique to theoretically justify the positivity of the numerical solution. First, the fully discrete numerical scheme corresponds to a minimization of a discrete energy function. And also, such an energy functional is strictly convex, as long as the phase variable stays within $(-1,1)$ at a point-wise level. Subsequently, to avoid a circular ``chicken-and-the-egg" argument, we take a closed domain for the numerical solution variables, in which the limit bound values of $-1$ and $1$ are not reachable. In turn, the continuous energy function has to have a global minimum over this closed domain. Moreover, we make use of the following subtle fact: the singular nature of the logarithmic function prevents such a global minimum from being obtained at a boundary point (in terms of numerical solution variable domain), as long as the numerical solution stays bounded at the previous time step. As a result, since the global minimum could only possibly occur at an interior point in the numerical solution variable domain, we conclude that the numerical scheme has to be satisfied, so that the existence of the numerical solution is proved. In addition, due to the convex nature of the energy function, the uniqueness of the numerical solution becomes a direct consequence. As a further consequence, we observe that: as long as the numerical solution stays bounded at the previous time step, i.e., within $[-M, M]$ ($M >0$), not necessarily $(-1, 1)$, and its average stays between $-1$ and $1$, there must exist a unique numerical solution which stays within $(-1, 1)$ at the next time step. 
This leads to an interesting difference between the present results and those in~\cite{elliott92a}, where the requirement for the initial data, namely, $\| \phi_0 \|_\infty \le 1$, has to be imposed for the analysis to go through. On the other hand, the latter constraint is completely natural.  Another new feature of the numerical analysis in this article is the theoretical justification of the energy stability. As a result of the unconditional energy stability, a uniform in time $H_h^1$ bound for the numerical solution could be derived. In addition, a detailed convergence analysis of the proposed numerical scheme could be derived, which gives an optimal rate error estimate in the $\ell^\infty (0,T; H_h^{-1}) \cap \ell^2 (0, T; H_h^1)$ norm. A key point in the analysis lies in the following subtle fact: since the nonlinear logarithmic term corresponds to a convex energy, the corresponding nonlinear error inner product is always non-negative. And also, the error estimate associated with the surface diffusion term indicates an $\ell^2 (0, T; H_h^1)$ convergence. Because of the explicit treatment for the expansive term, this convergence estimate does not require the time step constraint, in comparison with the existing results~\cite{barrett95, barrett96, barrett01, elliott96c, elliott92a}.

On the other hand, all these positivity-preserving schemes are only first order accurate in time, which is not satisfactory in the practical computations. In turn, a higher order accurate in time, positivity-preserving numerical scheme is highly desired. In this article, we propose and analyze a second order accurate scheme for the CH model with Flory Huggins energy potential~\eqref{CH energy}, with unique solvability, positivity-preserving property and energy stability established. In more details, we apply the implicit backward differentiation formula (BDF) concept to derive second order temporal accuracy, while the expansive term is treated by a second order explicit extrapolation formula. An additional term $A \dt \Delta_h (\phi^{k+1} - \phi^k)$ is added, which represents a second order Douglas-Dupont-type regularization, and a careful calculation shows that energy stability is guaranteed, provided the mild condition $A \ge \frac{1}{16}$ is enforced. Moreover, the singular nature of the logarithmic term enables us to theoretically derive the positivity-preserving property of this second order numerical scheme, which is the first such result in this area. And also, an $H_h^{-1}$ inner product with the numerical error function leads to an optimal rate error estimate in the $\ell^\infty (0,T; H_h^{-1}) \cap \ell^2 (0, T; H_h^1)$ norm, with second order accuracy in both time and space.

The rest of the article is organized as follows. In Section~\ref{sec:numerical scheme} we propose the first order numerical scheme and state the corresponding theoretical results. The detailed proof for the positivity-preserving property of the numerical solution is provided in Section~\ref{sec: proof}. Subsequently, the energy stability analysis is established in Section~\ref{sec:energy stability}, and 
the optimal rate convergence analysis is presented in Section~\ref{sec:convergence}. The second order BDF scheme is outlined and analyzed in Section~\ref{sec:BDF2}. Some numerical results are presented in Section~\ref{sec:numerical results}, including a brief description of the 3-D multigrid solver. Finally, the concluding remarks are given in Section~\ref{sec:conclusion}.

\section{The first order numerical scheme} \label{sec:numerical scheme}

In the spatial discretization, the centered finite difference approximation is applied. We recall some of the basics of this methodology.

	\subsection{Discretization of space}
	\label{subsec:finite difference}

We use the notation and results for some discrete functions and
operators from~\cite{guo16, wise10, wise09}.
Let $\Omega = (0,L_x)\times(0,L_y)\times(0,L_z)$, where for simplicity, we assume $L_x =L_y=L_z =: L > 0$. Let $N\in\mathbb{N}$ be given, and define the grid spacing $h := \frac{L}{N}$.  We will assume -- but only for simplicity of notation, ultimately -- that the mesh spacing in the $x$, $y$, and $z$-directions are the same. We define the following two uniform, infinite grids with grid spacing $h>0$:
	\[
E := \{ p_{i+\hf} \ |\ i\in {\mathbb{Z}}\}, \quad C := \{ p_i \ |\ i\in {\mathbb{Z}}\},
	\]
where $p_i = p(i) := (i-\hf)\cdot h$. Consider the following 3-D discrete $N^3$-periodic function spaces:
	\begin{eqnarray*}
	\begin{aligned}
{\mathcal C}_{\rm per} &:= \left\{\nu: C\times C
\times C\rightarrow {\mathbb{R}}\ \middle| \ \nu_{i,j,k} = \nu_{i+\alpha N,j+\beta N, k+\gamma N}, \ \forall \, i,j,k,\alpha,\beta,\gamma\in \mathbb{Z} \right\},
	\\
{\mathcal E}^{\rm x}_{\rm per} &:=\left\{\nu: E\times C\times C\rightarrow {\mathbb{R}}\ \middle| \ \nu_{i+\frac12,j,k}= \nu_{i+\frac12+\alpha N,j+\beta N, k+\gamma N}, \ \forall \, i,j,k,\alpha,\beta,\gamma\in \mathbb{Z}\right\} .
	\end{aligned}
	\end{eqnarray*}
Here we are using the identification $\nu_{i,j,k} = \nu(p_i,p_j,p_k)$, \emph{et cetera}. The spaces  ${\mathcal E}^{\rm y}_{\rm per}$ and ${\mathcal E}^{\rm z}_{\rm per}$ are analogously defined. The functions of ${\mathcal C}_{\rm per}$ are called {\emph{cell centered functions}}. The functions of ${\mathcal E}^{\rm x}_{\rm per}$, ${\mathcal E}^{\rm y}_{\rm per}$, and ${\mathcal E}^{\rm z}_{\rm per}$,  are called {\emph{east-west}},  {\emph{north-south}}, and {\emph{up-down face-centered functions}}, respectively.  We also define the mean zero space
	\[
\mathring{\mathcal C}_{\rm per}:=\left\{\nu\in {\mathcal C}_{\rm per} \ \middle| 0 = \overline{\nu} :=  \frac{h^3}{| \Omega|} \sum_{i,j,k=1}^m \nu_{i,j,k} \right\} .
	\]
We define $\vec{\mathcal{E}}_{\rm per} := {\mathcal E}^{\rm x}_{\rm per}\times {\mathcal E}^{\rm y}_{\rm per}\times {\mathcal E}^{\rm z}_{\rm per}$.

We now introduce the important difference and average operators on the spaces:
	\begin{eqnarray*}
&& A_x \nu_{i+\hf,j,k} := \frac{1}{2}\left(\nu_{i+1,j,k} + \nu_{i,j,k} \right), \quad D_x \nu_{i+\hf,j,k} := \frac{1}{h}\left(\nu_{i+1,j,k} - \nu_{i,j,k} \right),
	\\
&& A_y \nu_{i,j+\hf,k} := \frac{1}{2}\left(\nu_{i,j+1,k} + \nu_{i,j,k} \right), \quad D_y \nu_{i,j+\hf,k} := \frac{1}{h}\left(\nu_{i,j+1,k} - \nu_{i,j,k} \right) ,
	\\
&& A_z \nu_{i,j,k+\hf} := \frac{1}{2}\left(\nu_{i,j,k+1} + \nu_{i,j,k} \right), \quad D_z \nu_{i,j,k+\hf} := \frac{1}{h}\left(\nu_{i,j,k+1} - \nu_{i,j,k} \right) ,
	\end{eqnarray*}
with $A_x,\, D_x: {\mathcal C}_{\rm per}\rightarrow{\mathcal E}_{\rm per}^{\rm x}$, $A_y,\, D_y: {\mathcal C}_{\rm per}\rightarrow{\mathcal E}_{\rm per}^{\rm y}$, $A_z,\, D_z: {\mathcal C}_{\rm per}\rightarrow{\mathcal E}_{\rm per}^{\rm z}$.
Likewise,
	\begin{eqnarray*}
&& a_x \nu_{i, j, k} := \frac{1}{2}\left(\nu_{i+\hf, j, k} + \nu_{i-\hf, j, k} \right),	 \quad d_x \nu_{i, j, k} := \frac{1}{h}\left(\nu_{i+\hf, j, k} - \nu_{i-\hf, j, k} \right),
	\\
&& a_y \nu_{i,j, k} := \frac{1}{2}\left(\nu_{i,j+\hf, k} + \nu_{i,j-\hf, k} \right),	 \quad d_y \nu_{i,j, k} := \frac{1}{h}\left(\nu_{i,j+\hf, k} - \nu_{i,j-\hf, k} \right),
	\\
&& a_z \nu_{i,j,k} := \frac{1}{2}\left(\nu_{i, j,k+\hf} + \nu_{i, j, k-\hf} \right),	 \quad d_z \nu_{i,j, k} := \frac{1}{h}\left(\nu_{i, j,k+\hf} - \nu_{i, j,k-\hf} \right),
	\end{eqnarray*}
with $a_x,\, d_x : {\mathcal E}_{\rm per}^{\rm x}\rightarrow{\mathcal C}_{\rm per}$, $a_y,\, d_y : {\mathcal E}_{\rm per}^{\rm y}\rightarrow{\mathcal C}_{\rm per}$, and $a_z,\, d_z : {\mathcal E}_{\rm per}^{\rm z}\rightarrow{\mathcal C}_{\rm per}$.
The discrete gradient $\nabh:{\mathcal C}_{\rm per}\rightarrow \vec{\mathcal{E}}_{\rm per}$ is defined via
	\[
\nabh\nu_{i,j,k} =\left( D_x\nu_{i+\hf, j, k},  D_y\nu_{i, j+\hf, k},D_z\nu_{i, j, k+\hf}\right) ,
	\]
and the discrete divergence $\nabh\cdot :\vec{\mathcal{E}}_{\rm per} \rightarrow {\mathcal C}_{\rm per}$ is defined via
	\[
\nabh\cdot\vec{f}_{i,j,k} = d_x f^x_{i,j,k}	+ d_y f^y_{i,j,k} + d_z f^z_{i,j,k},
	\]
where $\vec{f} = (f^x,f^y,f^z)\in \vec{\mathcal{E}}_{\rm per}$. The standard 3-D discrete Laplacian, $\Delta_h : {\mathcal C}_{\rm per}\rightarrow{\mathcal C}_{\rm per}$, is given by
	\begin{align*}
\Delta_h \nu_{i,j,k} := & \nabla_{h}\cdot\left(\nabla_{h}\phi\right)_{i,j,k} =  d_x(D_x \nu)_{i,j,k} + d_y(D_y \nu)_{i,j,k}+d_z(D_z \nu)_{i,j,k}
	\\
= & \ \frac{1}{h^2}\left( \nu_{i+1,j,k}+\nu_{i-1,j,k}+\nu_{i,j+1,k}+\nu_{i,j-1,k}+\nu_{i,j,k+1}+\nu_{i,j,k-1} - 6\nu_{i,j,k}\right).
	\end{align*}
More generally, if $\mathcal{D}$ is a periodic \emph{scalar} function that is defined at all of the face center points and $\vec{f}\in\vec{\mathcal{E}}_{\rm per}$, then $\mathcal{D}\vec{f}\in\vec{\mathcal{E}}_{\rm per}$, assuming point-wise multiplication, and we may define
	\[
\nabla_h\cdot \big(\mathcal{D} \vec{f} \big)_{i,j,k} = d_x\left(\mathcal{D}f^x\right)_{i,j,k}  + d_y\left(\mathcal{D}f^y\right)_{i,j,k} + d_z\left(\mathcal{D}f^z\right)_{i,j,k} .
	\]
Specifically, if $\nu\in \mathcal{C}_{\rm per}$, then $\nabla_h \cdot\left(\mathcal{D} \nabla_h  \ \ \right):\mathcal{C}_{\rm per} \rightarrow \mathcal{C}_{\rm per}$ is defined point-wise via
	\[
\nabla_h\cdot \big(\mathcal{D} \nabla_h \nu \big)_{i,j,k} = d_x\left(\mathcal{D}D_x\nu\right)_{i,j,k}  + d_y\left(\mathcal{D} D_y\nu\right)_{i,j,k} + d_z\left(\mathcal{D}D_z\nu\right)_{i,j,k} .
	\]

Now we are ready to define the following grid inner products:
	\begin{equation*}
	\begin{aligned}
\ciptwo{\nu}{\xi} &:= h^3\sum_{i,j,k=1}^N  \nu_{i,j,k}\, \xi_{i,j,k},\quad \nu,\, \xi\in {\mathcal C}_{\rm per},\quad
& \eipx{\nu}{\xi} := \ciptwo{a_x(\nu\xi)}{1} ,\quad \nu,\, \xi\in{\mathcal E}^{\rm x}_{\rm per},
\\
\eipy{\nu}{\xi} &:= \ciptwo{a_y(\nu\xi)}{1} ,\quad \nu,\, \xi\in{\mathcal E}^{\rm y}_{\rm per},\quad
&\eipz{\nu}{\xi} := \ciptwo{a_z(\nu\xi)}{1} ,\quad \nu,\, \xi\in{\mathcal E}^{\rm z}_{\rm per}.
	\end{aligned}
	\end{equation*}	
	\[
\eipvec{\vec{f}_1}{\vec{f}_2} : = \eipx{f_1^x}{f_2^x}	+ \eipy{f_1^y}{f_2^y} + \eipz{f_1^z}{f_2^z}, \quad \vec{f}_i = (f_i^x,f_i^y,f_i^z) \in \vec{\mathcal{E}}_{\rm per}, \ i = 1,2.
	\]
	
We define the following norms for cell-centered functions. If $\nu\in {\mathcal C}_{\rm per}$, then $\nrm{\nu}_2^2 := \ciptwo{\nu}{\nu}$; $\nrm{\nu}_p^p := \ciptwo{|\nu|^p}{1}$, for $1\le p< \infty$, and $\nrm{\nu}_\infty := \max_{1\le i,j,k\le N}\left|\nu_{i,j,k}\right|$.
We define  norms of the gradient as follows: for $\nu\in{\mathcal C}_{\rm per}$,
	\[
\nrm{ \nabla_h \nu}_2^2 : = \eipvec{\nabh \nu }{ \nabh \nu } = \eipx{D_x\nu}{D_x\nu} + \eipy{D_y\nu}{D_y\nu} +\eipz{D_z\nu}{D_z\nu},
	\]
and, more generally, for $1\le p<\infty$,
	\begin{equation}
\nrm{\nabla_h \nu}_p := \left( \eipx{|D_x\nu|^p}{1} + \eipy{|D_y\nu|^p}{1} +\eipz{|D_z\nu|^p}{1}\right)^{\frac1p} .
	\end{equation}
Higher order norms can be defined. For example,
	\[
\nrm{\nu}_{H_h^1}^2 : =  \nrm{\nu}_2^2+ \nrm{ \nabla_h \nu}_2^2, \quad \nrm{\nu}_{H_h^2}^2 : =  \nrm{\nu}_{H_h^1}^2  + \nrm{ \Delta_h \nu}_2^2.
	\]

	\begin{lm}
	\label{lemma1}
Let $\mathcal{D}$ be an arbitrary periodic, scalar function defined on all of the face center points. For any $\psi, \nu \in {\mathcal C}_{\rm per}$ and any $\vec{f}\in\vec{\mathcal{E}}_{\rm per}$, the following summation by parts formulas are valid:
	\begin{equation}
\ciptwo{\psi}{\nabla_h\cdot\vec{f}} = - \eipvec{\nabla_h \psi}{ \vec{f}}, \quad \ciptwo{\psi}{\nabla_h\cdot \left(\mathcal{D}\nabla_h\nu\right)} = - \eipvec{\nabla_h \psi }{ \mathcal{D}\nabla_h\nu} .
	\label{lemma 1-0}
	\end{equation}
	\end{lm}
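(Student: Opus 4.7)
The plan is to prove the second identity as a direct consequence of the first (taking $\vec{f} = \mathcal{D}\nabla_h\nu \in \vec{\mathcal{E}}_{\rm per}$, which is legitimate since $\mathcal{D}$ is defined at the face centers and multiplication is point-wise), so the entire burden reduces to establishing the first formula in \eqref{lemma 1-0}. For that, I would split the right-hand side into its three directional contributions using the definitions of $\nabla_h$, $\nabla_h\cdot$, and $\eipvec{\cdot}{\cdot}$, and reduce the claim to three coordinate-wise identities of the form
\begin{equation*}
\ciptwo{\psi}{d_x f^x} = -\eipx{D_x\psi}{f^x},
\end{equation*}
with analogous statements for $y$ and $z$. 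By symmetry it suffices to establish the $x$-version; the other two are identical.

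To prove the $x$-identity, I would expand both sides to explicit grid sums. On the left, the definition $d_x f^x_{i,j,k} = h^{-1}(f^x_{i+\hf,j,k} - f^x_{i-\hf,j,k})$ gives
\begin{equation*}
\ciptwo{\psi}{d_x f^x} = h^{2}\sum_{i,j,k=1}^N \psi_{i,j,k}\bigl(f^x_{i+\hf,j,k} - f^x_{i-\hf,j,k}\bigr).
\end{equation*}
On the right, unfolding $\eipx{\cdot}{\cdot} = \ciptwo{a_x(\,\cdot\,)}{1}$ and then $a_x$, I get
\begin{equation*}
\eipx{D_x\psi}{f^x} = \tfrac{h^3}{2}\sum_{i,j,k=1}^N\Bigl((D_x\psi\,f^x)_{i+\hf,j,k} + (D_x\psi\,f^x)_{i-\hf,j,k}\Bigr).
\end{equation*}
Here the $N^3$-periodicity of all factors in the $i$-index (i.e., the fact that summing a periodic half-integer-indexed quantity over one period is independent of whether we use the index set $\{i+\hf\}$ or $\{i-\hf\}$) collapses the two sums to a single sum, and then the definition of $D_x\psi$ yields $\eipx{D_x\psi}{f^x}=h^2\sum_{i,j,k}(\psi_{i+1,j,k}-\psi_{i,j,k})f^x_{i+\hf,j,k}$.

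The main (and essentially only) step is then to match this with the left-hand side by a single shift-of-index argument: in the expanded left-hand side, the sum $\sum_{i,j,k}\psi_{i,j,k}f^x_{i-\hf,j,k}$ is rewritten, via periodicity, as $\sum_{i,j,k}\psi_{i+1,j,k}f^x_{i+\hf,j,k}$, after which both sides telescope against one another and the claimed sign appears. I do not anticipate any genuine obstacle, since the only non-cosmetic ingredient is the legality of this periodic re-indexing; this is where the periodic boundary hypothesis (built into the spaces $\mathcal{C}_{\rm per}$ and $\mathcal{E}^{\rm x}_{\rm per}$) is actually used, and it eliminates the boundary terms that would otherwise survive. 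The $y$- and $z$-identities are proved verbatim, summing the three gives the first formula of \eqref{lemma 1-0}, and substituting $\vec f = \mathcal{D}\nabla_h\nu$ delivers the second.
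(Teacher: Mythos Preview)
Your proposal is correct and is the standard direct computation for this summation-by-parts identity; the paper itself does not supply a proof of this lemma but rather states it as a known result imported from the references \cite{guo16, wise10, wise09}. Your argument is exactly the kind of index-shift-plus-periodicity verification one would give for it, and there is nothing to compare against.
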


To facilitate the convergence analysis, we need to introduce a discrete analogue of the space $H_{per}^{-1}\left(\Omega\right)$, as outlined in~\cite{wang11a}. Suppose that $\mathcal{D}$ is a positive, periodic scalar function defined at all of the face center points. For any $\phi\in{\mathcal C}_{\rm per}$, there exists a unique $\psi\in\mathring{\mathcal C}_{\rm per}$ that solves
	\begin{eqnarray}
\mathcal{L}_{\mathcal{D}}(\psi):= - \nabla_h \cdot\left(\mathcal{D}\nabla_h \psi\right) = \phi - \overline{\phi} ,
	\end{eqnarray}
where, recall, $\overline{\phi} := |\Omega|^{-1}\ciptwo{\phi}{1}$. We equip this space with a bilinear form: for any $\phi_1,\, \phi_2\in \mathring{\mathcal C}_{\rm per}$, define
	\begin{equation}
\cipgen{ \phi_1 }{ \phi_2 }{\mathcal{L}_{\mathcal{D}}^{-1}} := \eipvec{\mathcal{D}\nabla_h \psi_1 }{ \nabla_h \psi_2 },
	\end{equation}
where $\psi_i\in\mathring{\mathcal C}_{\rm per}$ is the unique solution to
	\begin{equation}
\mathcal{L}_{\mathcal{D}}(\psi_i):= - \nabla_h \cdot\left(\mathcal{D}\nabla_h \psi_i\right)  = \phi_i, \quad i = 1, 2.
	\end{equation}
The following identity~\cite{wang11a} is easy to prove via summation-by-parts:
	\begin{equation}
\cipgen{\phi_1 }{ \phi_2 }{\mathcal{L}_{\mathcal{D}}^{-1}} = \ciptwo{\phi_1}{ \mathcal{L}_{\mathcal{D}}^{-1} (\phi_2) } = \ciptwo{ \mathcal{L}_{\mathcal{D}}^{-1} (\phi_1) }{\phi_2 },
	\end{equation}
and since $\mathcal{L}_{\mathcal{D}}$ is symmetric positive definite, $\cipgen{ \ \cdot \ }{\ \cdot \ }{\mathcal{L}_{\mathcal{D}}^{-1}}$ is an inner product on $\mathring{\mathcal C}_{\rm per}$~\cite{wang11a}. When $\mathcal{D}\equiv 1$, we drop the subscript and write $\mathcal{L}_{1} = \mathcal{L}$, and in this case we usually write $\cipgen{ \ \cdot \ }{\ \cdot \ }{\mathcal{L}_{\mathcal{D}}^{-1}} =: \cipgen{ \ \cdot \ }{\ \cdot \ }{-1,h}$. In the gerneral setting, the norm associated to this inner product is denoted $\nrm{\phi}_{\mathcal{L}_{\mathcal{D}}^{-1}} := \sqrt{\cipgen{\phi }{ \phi }{\mathcal{L}_{\mathcal{D}}^{-1}}}$, for all $\phi \in \mathring{\mathcal C}_{\rm per}$, but, if $\mathcal{D}\equiv 1$, we write $\nrm{\, \cdot \, }_{\mathcal{L}_{\mathcal{D}}^{-1}} =: \nrm{\, \cdot \, }_{-1,h}$.

\subsection{The first order numerical scheme and the main theoretical results}

We follow the idea of convexity splitting and consider the following semi-implicit, fully discrete schemes: given $\phi^n\in \mathcal{C}_{\rm per}$, find $\phi^{n+1},\mu^{n+1}\in  \mathcal{C}_{\rm per}$, such that
	\begin{align}
\frac{\phi^{n+1} - \phi^n}{\dt} = & \ - \hat{\cal M}^n  \mu^{n+1}   \quad  \mbox{(AC equation)} ,
	\label{scheme-AC_LOG-1}		
	\\
\frac{\phi^{n+1} - \phi^n}{\dt}  = & \ \nabla_h \cdot ( \check{\cal M}^n \nabla_h \mu^{n+1} )   \quad \mbox{(CH equation)} ,
	\label{scheme-CH_LOG-1}
	\end{align}
where
	\begin{equation}
\mu^{n+1} = \ln (1+\phi^{n+1}) - \ln (1-\phi^{n+1}) - \theta_0 \phi^n - \varepsilon^2 \Delta_h \phi^{n+1} .
      \label{scheme-mu-0}
	\end{equation}
The mobility approximations are defined as follows: for the Allen-Cahn approximation, $\hat{\cal M}^n = {\cal M}(\phi^n) \in \mathcal{C}_{\rm per}$, quite simply. For the Cahn-Hilliard approximation, we require that $\check{\cal M}^n$ is defined at all of the face center points. This is accomplished via
	\begin{align}  \label{scheme-CH_LOG-mobility-1}
\check{\cal M}_{i+\hf,j,k}^n = & \ {\cal M}(A_x \phi^n_{i+\hf,j,k}),	 \ \check{\cal M}_{i,j+\hf,k}^n = {\cal M}(A_y \phi^n_{i,j+\hf,k}),
	\\
\check{\cal M}_{i,j,k+\hf}^n = & \ {\cal M}(A_z \phi^n_{i,j,k+\hf}).
	\end{align}

Of course, a point-wise bound for the grid function $\phi^{n+1}$, namely, $-1 < \phi^{n+1}_{i,j,k} < 1$, is needed so that the numerical scheme is well-defined. The main theoretical results are stated below, which guarantee that there exist unique numerical solutions for \eqref{scheme-AC_LOG-1} and \eqref{scheme-CH_LOG-1}, so that the given bound is satisfied. In the first part, we assume that ${\cal M} (\phi) \equiv 1$; the non-constant mobility case will be analyzed in a later section.

For the Allen-Cahn equation, we have
	\begin{thm}
	\label{AC-positivity}
Assume that ${\cal M} (\phi) \equiv 1$. Given $\phi^n\in\mathcal{C}_{\rm per}$, with $\nrm{\phi^n}_\infty \le M$, for some $M >0$, there exists a unique solution $\phi^{n+1} \in \mathcal{C}_{\rm per}$ to \eqref{scheme-AC_LOG-1}, with $\nrm{\phi^{n+1}}_\infty < 1$. Moreover, if the initial data satisfy $\nrm{\phi^0}_\infty \le 1 - \delta_0$, there exists $\delta^\star\in (0,1)$,  which depends upon $\delta_0$ but is independent of $\varepsilon$ and $n$, so that $\nrm{\phi^n}_\infty \le 1 - \delta^\star$, $\forall n \in \mathbb{N}$.
	\end{thm}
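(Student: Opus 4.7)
The scheme \eqref{scheme-AC_LOG-1} with $\mathcal{M}\equiv 1$ can be rewritten as
\begin{equation*}
\phi^{n+1} + \Delta t\bigl[\ln(1+\phi^{n+1}) - \ln(1-\phi^{n+1})\bigr] - \varepsilon^2 \Delta t\, \Delta_h \phi^{n+1} = (1+\Delta t\,\theta_0)\,\phi^n ,
\end{equation*}
which is precisely the Euler--Lagrange equation $\nabla J(\phi^{n+1}) = 0$ for the discrete functional
\begin{equation*}
J(\phi) := \tfrac{1}{2}\|\phi\|_2^2 + \Delta t\,\langle F(\phi),1\rangle_\Omega + \tfrac{\varepsilon^2 \Delta t}{2}\|\nabla_h \phi\|_2^2 - \langle (1+\Delta t\,\theta_0)\phi^n,\phi\rangle_\Omega ,
\end{equation*}
with $F(s) := (1+s)\ln(1+s) + (1-s)\ln(1-s)$. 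Since $F''(s)=\tfrac{1}{1+s}+\tfrac{1}{1-s}>0$ on $(-1,1)$, the functional $J$ is \emph{strictly convex} on the open set $\mathcal{A} := \{\phi\in\mathcal{C}_{\rm per}: -1<\phi_{i,j,k}<1\}$; any critical point in $\mathcal{A}$ will be the unique minimizer and thus the unique solution of the scheme with $\|\phi^{n+1}\|_\infty<1$.

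\noindent\textbf{Existence via boundary exclusion.} For $\delta\in(0,1)$ the closed box $K_\delta := \{\phi : -1+\delta \le \phi_{i,j,k}\le 1-\delta\}$ is compact, and $J|_{K_\delta}$ attains a minimizer $\phi^\star$. The plan is to show that, once $\delta$ is sufficiently small, $\phi^\star$ cannot lie on $\partial K_\delta$. Suppose to the contrary that, say, $\phi^\star_{i_0,j_0,k_0}=1-\delta$; optimality forces $\partial J/\partial\phi_{i_0,j_0,k_0}(\phi^\star)\le 0$, which a direct computation reduces to
\begin{equation*}
(1-\delta) + \Delta t\,\ln\!\tfrac{2-\delta}{\delta} - \varepsilon^2\Delta t\,\Delta_h \phi^\star_{i_0,j_0,k_0} - (1+\Delta t\,\theta_0)\,\phi^n_{i_0,j_0,k_0} \le 0.
\end{equation*}
Because $|\phi^\star|\le 1$ everywhere the discrete Laplacian term is bounded by $12\varepsilon^2\Delta t/h^2$, and $\|\phi^n\|_\infty\le M$ bounds the remaining contribution, so every term except $\Delta t\ln\!\frac{2-\delta}{\delta}\to+\infty$ stays controlled as $\delta\to 0^+$. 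This contradiction proves $\phi^\star\in\mathcal{A}$ for small $\delta$, and strict convexity of $J$ on $\mathcal{A}$ delivers uniqueness.

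\noindent\textbf{Uniform-in-time bound via the discrete maximum principle.} For the ``moreover'' part, induction on $n$ is the natural vehicle: assuming $\|\phi^n\|_\infty\le 1-\delta^\star$, I will derive $\|\phi^{n+1}\|_\infty\le 1-\delta^\star$ for a suitable $\delta^\star\in(0,\delta_0]$. Let $\phi^{n+1}_{i_0,j_0,k_0}=\max_{i,j,k}\phi^{n+1}_{i,j,k}$; the discrete maximum principle gives $\Delta_h\phi^{n+1}_{i_0,j_0,k_0}\le 0$, and evaluating the scheme at that node yields
\begin{equation*}
p + \Delta t \ln\!\tfrac{1+p}{1-p} \le (1+\Delta t\,\theta_0)(1-\delta^\star),\qquad p := \phi^{n+1}_{i_0,j_0,k_0}.
\end{equation*}
The left-hand side being strictly increasing in $p\in(-1,1)$, this implies $p\le 1-\delta^\star$ as soon as $\delta^\star$ satisfies
\begin{equation*}
\ln\!\tfrac{2-\delta^\star}{\delta^\star} \ge \theta_0(1-\delta^\star),
\end{equation*}
and the left side blows up as $\delta^\star\to 0^+$, so such a choice exists. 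A symmetric argument at $\min\phi^{n+1}$ closes the induction; crucially, the constraint on $\delta^\star$ involves neither $\varepsilon$ nor $\Delta t$ nor $h$ nor $n$, so $\delta^\star$ depends only on $\delta_0$ and $\theta_0$.

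\noindent\textbf{Main obstacle.} The delicate point is avoiding a ``chicken-and-egg'' circularity in the existence step: the scheme is undefined outside $\mathcal{A}$, so one cannot assume a solution and then verify its range. Passing to the compact $K_\delta$ restores a minimizer unconditionally, and the $-\ln\delta$ singularity then does the structural work of repelling that minimizer from $\partial K_\delta$, which is exactly how the logarithmic potential enforces positivity at the discrete level. The explicit treatment of the concave $-\theta_0\phi$ term is the second key choice: it produces the clean bound $(1+\Delta t\theta_0)(1-\delta^\star)$ on the right-hand side of the maximum-principle estimate and is what decouples $\delta^\star$ from $\varepsilon$.
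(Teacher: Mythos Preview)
Your proposal is correct and follows essentially the same route as the paper: recast the scheme as minimization of a strictly convex functional over the box $K_\delta$, rule out boundary minimizers via the logarithmic blow-up, and then run a discrete maximum-principle induction for the uniform separation bound. One small refinement the paper exploits in the existence step: when $\phi^\star_{i_0,j_0,k_0}=1-\delta$, this value is already the maximum of $\phi^\star$ over the grid (since $\phi^\star\in K_\delta$), so $\Delta_h\phi^\star_{i_0,j_0,k_0}\le 0$ and the surface-diffusion term can simply be dropped with the correct sign---your crude $12\varepsilon^2\Delta t/h^2$ bound is unnecessary and introduces an $h$-dependence in $\delta$ that the paper avoids.
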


For the Cahn-Hilliard equation, we have
	\begin{thm}
	\label{CH-positivity}
Assume that ${\cal M} (\phi) \equiv 1$. Given $\phi^n\in\mathcal{C}_{\rm per}$, with $\nrm{\phi^n}_\infty \le M$, for some $M >0$,  and $\left|\overline{\phi^n}\right| < 1$, there exists a unique solution $\phi^{n+1}\in\mathcal{C}_{\rm per}$ to \eqref{scheme-CH_LOG-1}, with $\phi^{n+1}-\overline{\phi^n}\in\mathring{\mathcal{C}}_{\rm per}$ and  $\nrm{\phi^{n+1}}_\infty < 1$.
	\end{thm}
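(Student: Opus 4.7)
\textbf{Proof plan for Theorem~\ref{CH-positivity}.}
The scheme \eqref{scheme-CH_LOG-1}--\eqref{scheme-mu-0} with $\mathcal{M}\equiv 1$ conserves the discrete mean, so the search for $\phi^{n+1}$ is confined to the affine slice $\{\phi \in \mathcal{C}_{\rm per} : \overline{\phi} = \overline{\phi^n}\}$. I recast the scheme as the Euler--Lagrange equation, on this slice, of the discrete energy
\[
\mathcal{J}(\phi) := \frac{1}{2\dt}\nrm{\phi - \phi^n}_{-1,h}^2 + \ciptwo{(1+\phi)\ln(1+\phi)+(1-\phi)\ln(1-\phi)}{1} - \theta_0 \ciptwo{\phi^n}{\phi} + \frac{\varepsilon^2}{2}\nrm{\nabh \phi}_2^2 ;
\]
testing the first variation against an arbitrary $\xi \in \mathring{\mathcal{C}}_{\rm per}$ and applying $\mathcal{L}$ shows that any interior critical point of this constrained variational problem produces $(\phi^{n+1},\mu^{n+1})$ satisfying \eqref{scheme-CH_LOG-1}--\eqref{scheme-mu-0}, with $\overline{\mu^{n+1}}$ playing the role of the Lagrange multiplier for the mass constraint. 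Since the physical box $(-1,1)^{N^3}$ is not compact, I restrict $\mathcal{J}$ to the set
\[
A_{h,\delta} := \{ \phi \in \mathcal{C}_{\rm per} : \overline{\phi} = \overline{\phi^n},\ -1+\delta \le \phi_{i,j,k} \le 1-\delta\ \forall\, i,j,k \},
\]
for each $\delta \in (0,\ 1 - |\overline{\phi^n}|)$, which is nonempty, compact, and convex. On $A_{h,\delta}$, $\mathcal{J}$ is continuous and strictly convex (the entropy is coordinatewise strictly convex; the other pieces are convex quadratic forms), so $\mathcal{J}$ attains a unique minimum $\phi^\star \in A_{h,\delta}$.

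The decisive step, and the main obstacle, is to exclude $\phi^\star$ from the boundary of the box for all sufficiently small $\delta$. Suppose for contradiction that $\phi^\star_{\vec{\imath}_0} = 1-\delta$ at some index $\vec{\imath}_0$. Because $\overline{\phi^\star} = \overline{\phi^n} < 1-\delta$, there must exist an index $\vec{\imath}_1 \ne \vec{\imath}_0$ with $\phi^\star_{\vec{\imath}_1} \le \overline{\phi^n}$. Using the mass-preserving test direction $\xi := -e_{\vec{\imath}_0} + e_{\vec{\imath}_1}$, which keeps $\phi^\star + s\xi \in A_{h,\delta}$ for all small $s>0$, a direct computation yields
\[
\left.\frac{d}{ds}\mathcal{J}(\phi^\star + s\xi)\right|_{s=0^+} = h^3\!\left[\ln\!\frac{1+\phi^\star_{\vec{\imath}_1}}{1-\phi^\star_{\vec{\imath}_1}} - \ln\!\frac{2-\delta}{\delta}\right] + R(\delta),
\]
where $R(\delta)$ collects the contributions from $\nrm{\phi-\phi^n}_{-1,h}^2$, $\theta_0\ciptwo{\phi^n}{\phi}$, and $\nrm{\nabh\phi}_2^2$. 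Each of the latter is bounded by a constant $C^\star = C^\star(h,\dt,\theta_0,\varepsilon,M)$ independent of $\delta$, using $\nrm{\phi^\star}_\infty \le 1$, $\nrm{\phi^n}_\infty \le M$, and the finite-dimensional operator bound for $\mathcal{L}^{-1}$ at fixed $h$. The monotonicity of $x \mapsto \ln\frac{1+x}{1-x}$ together with $\phi^\star_{\vec{\imath}_1} \le \overline{\phi^n}$ bounds the first bracket above by a $\delta$-independent constant, while $-\ln\frac{2-\delta}{\delta} \to -\infty$ as $\delta \to 0^+$. Hence the directional derivative is strictly negative once $\delta$ is small enough, contradicting minimality; the symmetric case $\phi^\star_{\vec{\imath}_0} = -1+\delta$ is handled by taking $\xi = e_{\vec{\imath}_0} - e_{\vec{\imath}_1}$ with $\phi^\star_{\vec{\imath}_1} \ge \overline{\phi^n}$.

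With the boundary ruled out, $\phi^\star$ is an interior critical point. Setting $\phi^{n+1} := \phi^\star$ and recovering $\mu^{n+1}$ via \eqref{scheme-mu-0} produces the desired solution with $\nrm{\phi^{n+1}}_\infty < 1$, and uniqueness on the whole physical slice is inherited from strict convexity of $\mathcal{J}$. The delicate point is precisely the boundary-exclusion step: the perturbation $\xi$ must be chosen so that the hypothesis $|\overline{\phi^n}|<1$ translates into a $\delta$-independent upper bound on the auxiliary log derivative at $\vec{\imath}_1$, which is what prevents cancellation of the singular divergence at $\vec{\imath}_0$ as $\delta \to 0^+$.
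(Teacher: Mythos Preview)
Your argument is correct and follows the same overall strategy as the paper: recast the scheme as minimization of a strictly convex functional on the mass-constrained box $A_{h,\delta}$, then rule out boundary minimizers by showing that a two-point, mass-preserving perturbation strictly decreases $\mathcal{J}$ thanks to the logarithmic singularity.

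The differences are in how the non-singular remainder $R(\delta)$ is controlled. The paper selects the \emph{second} test point $\vec{\alpha}_1$ to be the global extremum of $\phi^\star$ (the maximum when $\vec{\alpha}_0$ is the minimum, and vice versa); this choice forces $\Delta_h\phi^\star_{\vec{\alpha}_0}$ and $\Delta_h\phi^\star_{\vec{\alpha}_1}$ to have the favourable signs, so the surface-diffusion contribution is $\le 0$ outright, with no $h$-dependence. For the $(-\Delta_h)^{-1}$ term the paper proves a dedicated lemma (Lemma~\ref{CH-positivity-Lem-0}) giving the $h$-independent bound $\|\mathcal{L}^{-1}(\phi^\star-\phi^n)\|_\infty\le C_1(M,\Omega)$ via Fourier analysis and elliptic regularity. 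Your route is more elementary: you pick any $\vec{\imath}_1$ with $\phi^\star_{\vec{\imath}_1}\le\overline{\phi^n}$ (which suffices to cap $\ln\frac{1+\phi^\star_{\vec{\imath}_1}}{1-\phi^\star_{\vec{\imath}_1}}$ by a $\delta$-independent constant), and you bound both the Laplacian and the $\mathcal{L}^{-1}$ contributions crudely using $\|\phi^\star\|_\infty\le 1$ and finite-dimensionality at fixed $h$. This avoids Lemma~\ref{CH-positivity-Lem-0} entirely, at the price of letting the threshold for $\delta$ depend on $h$ as well as on $\dt$. For the theorem as stated that is harmless, but the paper's sharper, $h$-uniform control is what later gets recycled in the second-order and non-constant-mobility analyses.
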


\section{Theoretical justification of the positivity-preserving properties}
	\label{sec: proof}

	\subsection{Proof of Theorem~\ref{AC-positivity}}
		
The analysis for the approximation to the Allen-Cahn equation is given first.

	\begin{proof}
We observe that, the numerical solution of \eqref{scheme-AC_LOG-1} is equivalent to the minimization of the discrete energy functional
	\begin{eqnarray}
\mathcal{J}^n(\phi) &:=& \frac{1}{2 \dt} \| \phi - \phi^n \|_2^2 + \ciptwo{ 1+ \phi }{ \ln (1+\phi)}  + \ciptwo{ 1-\phi }{  \ln (1-\phi) }
	\nonumber
	\\
  &&
+ \frac{\varepsilon^2}{2} \| \nabla_h \phi \|_2^2  - \theta_0  \ciptwo{\phi }{ \phi^n },
	\label{AC_LOG-positive-1}
	\end{eqnarray}
over the compact, convex admissible set $A_h = \left\{ \phi \in \mathcal{C}_{\rm per} \ \middle| \ \nrm{\phi}_\infty \le 1 \right\} \subset \mathbb{R}^{N^3}$. We observe that $\mathcal{J}^n$ is a strictly convex function over this domain.  We wish to prove that there exists a minimizer of $\mathcal{J}^n$ at an interior point of $A_h$. To this end, consider the following closed domain: for a given $\delta\in (0,\nicefrac{1}{2})$,
	\begin{equation}
A_{h,\delta} := \left\{ \phi \in \mathcal{C}_{\rm per} \ \middle| \  \nrm{\phi}_\infty \le 1 - \delta \right\} \subset A_h .
	\label{AC_LOG-positive-2}
	\end{equation}
Since $A_{h,\delta}$ is a compact and convex set in $\mathbb{R}^{N^3}$, there exists a (not necessarily unique) minimizer of $\mathcal{J}^n$ over $A_{h,\delta}$. The key point of our positivity analysis is that such a minimizer could not occur on the boundary of $A_{h,\delta}$, if $\delta$ is small enough.

Assume a minimizer of $\mathcal{J}^n$ over $A_{h,\delta}$, denote it by $\phi^\star$, occurs at a boundary point. There is at least one grid point $\vec{\alpha}_0 = (i_0,j_0,k_0)$ such that $|\phi^\star_{\vec{\alpha}_0}| = 1 -\delta$. First, let us assume, that $\phi^\star_{\vec{\alpha}_0} = \delta - 1$, so that the grid function $\phi^\star$ has a global minimum at $\vec{\alpha}_0$.  Since $\mathcal{J}^n$ is smooth over $A_{h,\delta}$, for all $\psi\in \mathcal{C}_{\rm per}$, the directional derivative is
	\[
d_s \mathcal{J}^n(\phi^\star+s\psi)|_{s=0} = \ciptwo{ \frac{\phi^\star-\phi^n}{\Delta t}+\ln (1+\phi^\star) - \ln (1-\phi^\star) - \theta_0 \phi^n - \varepsilon^2 \Delta_h \phi^\star}{\psi}.
	\]
If the direction grid function is of the form $\psi_{i,j,k} = \delta_{i,i_0}\delta_{j,j_0}\delta_{k,k_0}$, where $\delta_{k,\ell}$ denotes the usual Kronecker delta function,
	\begin{equation}
\frac{1}{h^3} d_s \mathcal{J}^n(\phi^\star+s\psi)|_{s=0} =  \ln \delta - \ln (2-\delta) - \theta_0 \phi^n_{\vec{\alpha}_0} - \varepsilon^2 \Delta_h \phi^\star_{\vec{\alpha}_0}  + \frac{\delta -1 - \phi^n_{\vec{\alpha}_0}}{\dt} . \
	\label{AC_LOG-positive-3}
	\end{equation}
Since $\phi^\star$ has a minimum at the grid point $\vec{\alpha}_0 = (i_0, j_0, k_0)$,  it follows that
	\begin{equation}
\phi^\star_{\vec{\alpha}_0} = -1 + \delta \le \phi^\star_{i,j,k}, \quad \forall \ (i,j,k) \ne \vec{\alpha}_0, \quad \mbox{and} \quad \Delta_h \phi^\star_{\vec{\alpha}_0} \ge 0 .
	\label{AC_LOG-positive-4}
	\end{equation}
The bound $\nrm{\phi^n}_\infty \le M$ and the fact that $\delta\in (0,\nicefrac{1}{2})$ imply that
	\begin{equation}
\delta -1 - \phi^n_{\vec{\alpha}_0} \le \delta -1  + M < M-\nicefrac{1}{2}.
	\label{AC_LOG-positive-5}
	\end{equation}
Define the parameters
	\[
\beta_0  := 2 \left( 1 + \exp \left\{ \theta_0 M + \frac{M - \hf}{\dt}  \right\} \right)^{-1}, \quad \beta := \min(\nicefrac{1}{2},\beta_0).
	\]
If $\delta \in (0,\beta)$, then
	\begin{equation}
\ln \delta - \ln (2-\delta) - \theta_0 \phi^n_{\vec{\alpha}_0} + \frac{ \delta - 1 - \phi^n_{\vec{\alpha}_0}}{\dt} < 0 .
	\label{AC_LOG-positive-6}
	\end{equation}
Using the estimates \eqref{AC_LOG-positive-4} -- \eqref{AC_LOG-positive-6} in \eqref{AC_LOG-positive-3} reveals that, provided $0 < \delta < \beta$,
	\begin{equation}
\frac{1}{h^3} d_s \mathcal{J}^n(\phi^\star+s\psi)|_{s=0} < 0.
	\label{AC_LOG-positive-7}
	\end{equation}
This yields a contradiction that $\mathcal{J}^n$ takes a global minimum at $\phi^\star$ over $A_{h,\delta}$, because the directional derivative at this boundary point is \emph{negative} in a direction pointing into the interior of $A_{h,\delta}$. In other words, going in the direction of $\psi$, we are certain to find an interior point $\phi^\star+s\psi$, provided $s>0$ is sufficiently small, such that $\mathcal{J}^n(\phi^\star+s\psi) < \mathcal{J}^n(\phi^\star)$.

  Using quite similar arguments, if $\phi^\star_{\vec{\alpha}_0}=1-\delta$, and $\delta\in (0,\beta)$, we would find that 	
	\begin{equation}
\frac{1}{h^3} d_s \mathcal{J}^n(\phi^\star+s\psi)|_{s=0} > 0.
	\label{AC_LOG-positive-7-b}
	\end{equation}

A combination of these two facts shows that the global minimum of $\mathcal{J}^n$ over $A_{h,\delta}$ could only possibly occur at an interior point, when $\delta\in (0,\beta)$. We conclude that there must be a solution $\phi \in \left(A_{h,\delta}\right)^{\mathrm{o}}$, the interior region of $A_{h,\delta}$, so that for all $\psi\in\mathcal{C}_{\rm per}$,
	\begin{equation}
0 =  d_s \mathcal{J}^n(\phi+s\psi)|_{s=0} .
	\label{AC_LOG-positive-8}
	\end{equation}
which is equivalent to the numerical solution of \eqref{scheme-AC_LOG-1}, provided $\delta\in (0,\beta)$. The existence of a ``positive" numerical solution is, therefore, established.  In addition, since $\mathcal{J}^n$ is a strictly convex function over $A_h$, the uniqueness analysis for this numerical solution is straightforward.

For the second part of this theorem, let us make the \emph{a priori} assumption that, for some $\delta_0\in (0,1)$,  $\nrm{\phi^0}_\infty = 1-\delta_0$. Furthermore, choose $\delta_1 \in (0, 1)$ so that
	\[
\delta_1 < \frac{2}{ \exp(\theta_0 +1)}.
	\]
Define $\delta^\star = \min(\delta_0,\delta_1)$, and consider the space $A_{h,\delta^\star}$. Suppose that $\phi^{1,\star}$ is the minimizer of $J^0$ over $A_{h,\delta^\star}$. If we use an analysis similar to that of the first part, we can show that, if $\phi^{1,\star}$ is a boundary point of $A_{h,\delta^\star}$, we obtain a contradiction. Specifically, if at $\vec{\alpha}_0 = (i_0,j_0,k_0)$, $\phi^{\star,1}_{\vec{\alpha}_0} = \delta^\star - 1$ (a minimum point), then we find
	\begin{align*}
0 = & \  \ln \delta^\star - \ln (2-\delta^\star) - \theta_0 \phi^0_{\vec{\alpha}_0} - \varepsilon^2 \Delta_h \phi^{1,\star}_{\vec{\alpha}_0}  + \frac{\delta^\star -1 - \phi^0_{\vec{\alpha}_0}}{\dt}	
	\\
\le & \ \ln\delta^\star - \ln (2-\delta^\star) + \theta_0 < 0 .
	\end{align*}
Similarly, if  at $\vec{\alpha}_0 = (i_0,j_0,k_0)$, $\phi^{\star,1}_{\vec{\alpha}_0} =  1-\delta^\star$ (a maximum point), then we likewise discover that $0>0$. This implies, ultimately, that the minimizer $\phi^1\in A_h$ of $J^0$ satisfies the bound
	\[
\nrm{\phi^1}_\infty < 1- \delta^\star .
	\]
Clearly, $\delta^\star$ only depends on $\delta_0$ and $\theta_0$; it is independent of $\varepsilon$. This argument can be continued inductively, and we can conclude that, for any $n\in\mathbb{N}$,
	\[
\nrm{\phi^n}_\infty < 1- \delta^\star .
	\]
The proof of Theorem~\ref{AC-positivity} is complete.
	\end{proof}

	\subsection{Proof of Theorem~\ref{CH-positivity}}

If solutions to the Cahn-Hilliard scheme \eqref{scheme-CH_LOG-1} exist, it is clear that, for any $n\in\mathbb{N}$,
	\[
\overline{\phi}_0 :=  |\Omega|^{-1}\ciptwo{\phi^0}{1} = |\Omega|^{-1}\ciptwo{\phi^1}{1} = \cdots =  |\Omega|^{-1}\ciptwo{\phi^n}{1} = \overline{\phi}_n,
	\]
with $|\overline{\phi}_n |< 1$.  Thus we expect $\langle \phi^n - \overline{\phi}_0 , 1 \rangle_\Omega =0$. For the proof of Theorem~\ref{CH-positivity}, we need the following technical lemma:

	\begin{lm}
	\label{CH-positivity-Lem-0}
Suppose that $\phi_1$, $\phi_2 \in \mathcal{C}_{\rm per}$, with $\ciptwo{\phi_1 - \phi_2}{1} = 0$, that is, $\phi_1 - \phi_2\in \mathring{\mathcal{C}}_{\rm per}$, and assume that $\nrm{\phi_1}_\infty < 1$, $\nrm{\phi_2}_\infty \le M$. Then,   we have the following estimate:
	\begin{equation}
\nrm{\mathcal{L}^{-1} (\phi_1 - \phi_2)}_\infty \le C_1 ,
	\label{CH_LOG-Lem-0}
	\end{equation}
where $C_1>0$ depends only upon $M$ and $\Omega$. In particular, $C_1$ is independent of the mesh spacing $h$.
	\end{lm}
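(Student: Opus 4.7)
My plan is to control $\psi := \mathcal{L}^{-1}(\phi_1-\phi_2) \in \mathring{\mathcal{C}}_{\rm per}$, which by definition satisfies the discrete Poisson equation $-\Delta_h \psi = \phi_1 - \phi_2$ on the periodic mean-zero subspace, by a discrete elliptic regularity argument followed by a discrete Sobolev embedding. The first step is essentially free: the hypotheses $\nrm{\phi_1}_\infty < 1$ and $\nrm{\phi_2}_\infty \le M$ give the pointwise bound $\nrm{\phi_1-\phi_2}_\infty < 1+M$, and hence $\nrm{\phi_1-\phi_2}_p \le (1+M)|\Omega|^{1/p}$ for every $p \in [1,\infty]$. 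So we have a right-hand side that is bounded in every $L^p$ uniformly in $h$.

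Next I would appeal to a uniform (in $h$) discrete elliptic regularity estimate of the form $\nrm{\psi}_{W_h^{2,p}} \le C_p \nrm{\phi_1-\phi_2}_p$ on $\mathring{\mathcal{C}}_{\rm per}$, with $C_p$ depending only on $p$ and $\Omega$. For the periodic centered difference Laplacian this is standard and can be proved cleanly via the discrete Fourier transform: the symbol of $-\Delta_h$ is bounded below by $c|\xi|^2$ uniformly in $h$ on the nonzero frequencies in the first Brillouin zone, so Calder\'on--Zygmund-type multiplier estimates yield $L^p$ boundedness of the discrete second differences of $\psi$ in terms of $\phi_1-\phi_2$, uniformly in $h$. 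Combined with the discrete Poincar\'e inequality on $\mathring{\mathcal{C}}_{\rm per}$, this controls the full $W_h^{2,p}$ norm.

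The final step is a discrete Sobolev embedding $W_h^{2,p} \hookrightarrow L^\infty$ for any $p > d/2 = 3/2$, also valid uniformly in $h$ (again established via Fourier representation or, equivalently, via the boundedness of the piecewise-constant or piecewise-multilinear interpolation operator between the grid function space and $W^{2,p}(\Omega)$). Choosing for instance $p = 2$, this gives
\[
\nrm{\psi}_\infty \le C \nrm{\psi}_{W_h^{2,2}} \le C' \nrm{\phi_1-\phi_2}_2 \le C'(1+M)|\Omega|^{1/2} =: C_1,
\]
and $C_1$ depends only on $M$ and $\Omega$.

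The main technical obstacle is stating and invoking the uniform-in-$h$ discrete regularity and embedding results without proving them from scratch; the paper by Wang et al.\ cited as [wang11a] already sets up this discrete $H^{-1}$ framework with periodic boundary conditions, and the Fourier-multiplier argument on the $N^3$ lattice yields both the elliptic estimate and the Sobolev embedding in a few lines. The only subtlety is to make sure the constants genuinely do not blow up as $h \to 0$, which comes down to the uniform lower bound $4 h^{-2}\sin^2(\pi k h/L) \ge c\, k^2$ on the symbol of $-\Delta_h$ away from the zero mode. Once these ingredients are in place, the lemma is immediate.
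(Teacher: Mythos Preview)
Your proposal is correct and follows essentially the same strategy as the paper: bound $\nrm{\phi_1-\phi_2}_2 \le (M+1)|\Omega|^{1/2}$, use Fourier analysis on the periodic lattice to get uniform-in-$h$ $H^2$ control of the inverse discrete Laplacian, then apply the 3-D Sobolev embedding $H^2\hookrightarrow L^\infty$. The only difference is in execution: rather than invoking discrete $W_h^{2,p}$ regularity and a discrete Sobolev embedding as black boxes, the paper constructs the trigonometric (Fourier) interpolant $\mathsf{v}\in C^\infty_{\rm per}(\Omega)$ of the grid solution $v=\mathcal{L}^{-1}(\phi_1-\phi_2)$, compares the symbols $\mu_\ell=-4h^{-2}\sin^2(\ell\pi h/L)$ and $\nu_\ell=-4\ell^2\pi^2/L^2$ directly (giving $\tfrac{4}{\pi^2}\nrm{\Delta\mathsf{v}}_{L^2}\le\nrm{\Delta_h v}_2$), and then applies \emph{continuous} elliptic regularity and the \emph{continuous} embedding $H^2\hookrightarrow L^\infty$ to $\mathsf{v}$, finishing with $\nrm{v}_\infty\le\nrm{\mathsf{v}}_{L^\infty}$. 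This sidesteps the need to cite or prove separate discrete regularity and embedding lemmas, and in particular makes the Calder\'on--Zygmund machinery you mention unnecessary (since you ultimately take $p=2$ anyway, Parseval suffices).
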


	\begin{proof}
Define $\psi :=  \phi_1 - \phi_2\in\mathring{\mathcal{C}}_{\rm per}$.  Thus $\nrm{\psi}_{\infty} < M+1$. This fact implies that
	\begin{equation}
\nrm{\psi}_2 = \nrm{\phi_1-\phi_2}_2 \le (M+1) | \Omega |^{1/2} .
	\label{CH_LOG-Lem-1}
	\end{equation}
Meanwhile, we denote $v = \mathcal{L}^{-1}(\psi)\in\mathring{\mathcal{C}}_{\rm per}$, so that $\mathcal{L}(v) = \psi$ with $\overline{v}=0$. Suppose that $N$ is odd, for simplicity, and $N=2K+1$. (The even case is handled in a very simliar manner.)  Since $v\in \mathcal{C}_{\rm per}$ it has the discrete Fourier representation of the form
	\begin{equation}
v_{i,j,k} = \sum^{K}_{\ell,m,n=-K}
\hat{v}^N_{\ell,m,n} {\rm e}^{2 \pi i ( \ell p_{i} + m p_{j} + n p_{k} )/ L } ,
	\label{def:Fourier-1}
	\end{equation}
where $p_{i} = (i-\hf)\cdot h$ and $\hat{v}^N_{\ell,m,n}$ are the discrete Fourier coefficients given by the discrete Fourier transform (DFT):
	\[
\hat{v}^N_{i,j,k} := \frac{h^3}{L^3} \sum^{K}_{\ell,m,n=-K}
v_{\ell,m,n} {\rm e}^{-2 \pi i ( \ell p_{i} + m p_{j} + n p_{k} )/ L } .
	\]
Since $v\in\mathring{\mathcal{C}}_{\rm per}$, $\hat{v}_{0,0,0}^N = 0$. We define the  Fourier interpolant of the grid function $v$ as
	\begin{equation}
	\label{def:extension}
\mathsf{v}(x,y,z) := \sum^{K}_{\ell,m,n=-K} \hat{v}^N_{\ell,m,n} {\rm e}^{2 \pi i ( \ell x + m y + n z )/ L } , \quad x,y,z\in\mathbb{R} ,
	\end{equation}
and observe that $\mathsf{v}\in C_{\rm per}^\infty(\Omega)$. Parseval's identity (at both the discrete and continuous levels) implies that
	\begin{equation}
\| v \|_2^2 = L^3 \sum^{K}_{\ell,m,n=-K}
|\hat{v}^N_{\ell,m,n}|^2 = \nrm{\mathsf{v}}_{L^2(\Omega)}^2 .
	\end{equation}
For the comparison between the discrete and continuous Laplacians, we start with the following Fourier expansions:
	\begin{eqnarray}
\Delta_h^x v_{i,j,k} &:=& \frac{v_{i+1,j,k}-2 v_{i,j,k}+v_{i-1,j,k}}{h^2}
	\nonumber
	\\
&=& \sum^{K}_{\ell,m,n=-K} \mu_{\ell} \hat{v}^N_{\ell,m,n}  {\rm e}^{2 \pi i  ( \ell x_{i} + m y_{j} + n z_{k} )/ L }  ,
	\\
  \partial_x^2 \mathsf{v} (x,y,z) &=& \sum^{K}_{\ell,m,n=-K}
  \nu_{\ell} \hat{v}^N_{\ell,m,n} {\rm e}^{2 \pi i
  ( \ell x + m y + n z )/ L } ,
	\end{eqnarray}
with
	\begin{equation}
\mu_{\ell} = -\frac{4\sin^2 {\frac{\ell\pi h}{L}}}{h^2}, \quad
\nu_{\ell} = -\frac{4 \ell^2 \pi^2}{L^2}.
\end{equation}
In turn, an application of Parseval's identity yields
\begin{eqnarray}
\nrm{\Delta_h^x v}^2_2 = L^3\sum^{K}_{\ell,m,n=-K}|\mu_{\ell}|^2|\hat{v}^N_{\ell,m,n}|^2, \\
\nrm{\partial_x^2 \mathsf{v}}^2_{L^2} =
 L^3\sum^{K}_{\ell,m,n=-K}|\nu_{\ell}|^2|
  \hat{v}^N_{\ell,m,n}|^2.
\end{eqnarray}
The comparison of Fourier eigenvalues shows that
\begin{equation}
\frac{4}{\pi^2} |\nu_{\ell}| \le |\mu_{\ell}| \le |\nu_{\ell}|,
\quad \mbox{for}  \quad -K \le \ell \le K .
\end{equation}
This indicates that
\begin{equation}
\frac{4}{\pi^2} \nrm{\partial_x^2 \mathsf{v}}_{L^2} \le \nrm{\Delta_h^x v}_2 \le \nrm{\partial_x^2 \mathsf{v}}_{L^2}.
\end{equation}
Similar estimates can be derived to reveal that
	\begin{equation}
  \frac{4}{\pi^2}\|\Delta \mathsf{v}\|_{L^2} \le \|\Delta_{h} v \|_2 = \| \psi  \|_2 \le \|\Delta \mathsf{v}\|_{L^2} ,
  	\label{CH_LOG-Lem-2}
	\end{equation}
which in turn yields that $\|\Delta \mathsf{v}\|_{L^2} \le \frac{(M+1) \pi^2 | \Omega |^{1/2}}{4}$.

Meanwhile, the following identity is obvious:
	\begin{equation}
\int_\Omega \mathsf{v} \, d {\bf x} = 0 ,  \quad \mbox{since} \quad \hat{v}_{0,0,0}^N = 0.
  \label{CH_LOG-Lem-3}
	\end{equation}
Subsequently, an application of elliptic regularity implies that
	\begin{equation}
\| \mathsf{v} \|_{H^2} \le C \left( \left| \int_\Omega \mathsf{v} \, d {\bf x} \right| +  \|\Delta \mathsf{v}\|_{L^2} \right)  \le C_0 (M+1) | \Omega |^{1/2} ,
	\label{CH_LOG-Lem-4}
	\end{equation}
for some constant $C_0>0$ that only depends upon $\Omega$. Since the grid function $v$ is the projection of the smooth function $\mathsf{v}$ into the cell-centered grid, the following discrete $\ell^\infty$ bound is clear:
	\begin{equation}
  \| v \|_\infty \le \| \mathsf{v} \|_{L^\infty} \le C \| \mathsf{v} \|_{H^2}  \le C_0 (M+1) | \Omega |^{1/2} ,
	\label{CH_LOG-Lem-5}
	\end{equation}
in which the 3-D Sobolev embedding has been used in the second step.  The proof of Lemma~\ref{CH-positivity-Lem-0} is completed by taking $C_1 := C_0 (M+1) | \Omega |^{1/2}$.
	\end{proof}

Now we proceed into the proof of Theorem~\ref{CH-positivity}.

	\begin{proof}
The numerical solution of \eqref{scheme-CH_LOG-1} is a minimizer of the following discrete energy functional:
	\begin{eqnarray}
\mathcal{J}^n (\phi) &:=& \frac{1}{2 \dt} \nrm{ \phi - \phi^n }_{-1,h}^2 + \ciptwo{ 1+ \phi }{ \ln (1+\phi) } + \ciptwo{ 1-\phi }{  \ln (1-\phi) }
	\nonumber
	\\
&&  + \frac{\varepsilon^2}{2} \| \nabla_h \phi \|_2^2  - \theta_0 \ciptwo{ \phi }{ \phi^n },
	\label{CH_LOG-positive-1}
	\end{eqnarray}
over the admissible set
	\[
A_h := \left\{ \phi \in \mathcal{C}_{\rm per} \ \middle| \  \nrm{\phi}_\infty \le 1,  \quad  \ciptwo{\phi-\overline{\phi}_0}{1}=0 \right\} \subset \mathbb{R}^{N^3}.
	\]
Observe that $\mathcal{J}^n$ is a strictly convex function over this domain.

To facilitate the analysis below, we transform the minimization problem into an equivalent one. Consider the functional
	\begin{eqnarray}
\mathcal{F}^n (\varphi) &:=& \mathcal{J}^n (\varphi + \overline{\phi}_0)
	\\
&=&  \frac{1}{2 \dt} \nrm{ \varphi + \overline{\phi}_0 - \phi^n}_{-1,h}^2 + \ciptwo{ 1+ \varphi + \overline{\phi}_0 }{ \ln ( 1 + \varphi + \overline{\phi}_0 )}
	\nonumber
	\\
&& + \ciptwo{1- \varphi - \overline{\phi}_0 }{  \ln (1-\varphi - \overline{\phi}_0)  } + \frac{\varepsilon^2}{2} \nrm{ \nabla_h \varphi }_2^2  - \theta_0 \ciptwo{ \varphi + \overline{\phi}_0 }{ \phi^n } ,
	\label{CH_LOG-positive-1-b}
	\end{eqnarray}
defined on the set
	\[
\mathring{A}_h := \left\{ \varphi \in \mathring{\mathcal{C}}_{\rm per} \ \middle| \  -1-\overline{\phi}_0 \le \varphi \le 1-\overline{\phi}_0  \right\} \subset \mathbb{R}^{N^3}.
	\]
If $\varphi\in \mathring{A}_h$ minimizes $\mathcal{F}^n$, then $\phi := \varphi + \overline{\phi}_0\in A_h$ minimizes $\mathcal{J}^n$, and \emph{vice versa}. Next, we prove that there exists a minimizer of $\mathcal{F}^n$ over the domain $\mathring{A}_h$. Similar to our previous arguments, we consider the following closed domain: for $\delta\in (0,\hf)$,
	\begin{equation}
\mathring{A}_{h,\delta} := \left\{ \varphi \in \mathring{\mathcal{C}}_{\rm per} \ \middle| \  \delta -1-\overline{\phi}_0 \le \varphi \le 1-\delta-\overline{\phi}_0  \right\} \subset \mathbb{R}^{N^3}.
	\label{CH_LOG-positive-2}
	\end{equation}
Since $\mathring{A}_{h,\delta}$ is a bounded, compact, and convex set in the subspace $\mathring{\mathcal{C}}_{\rm per}$, there exists a (not necessarily  unique) minimizer of $\mathcal{F}^n$ over $\mathring{A}_{h, \delta}$. The key point of the positivity analysis is that such a minimizer could not occur on the boundary of $\mathring{A}_{h,\delta}$, if $\delta$ is sufficiently small. To be more explicit, by the boundary of $\mathring{A}_{h, \delta}$, we mean the locus of points $\psi\in\mathring{A}_{h, \delta}$ such that $\nrm{\psi+\overline{\phi}_0}_\infty = 1-\delta$, precisely.

To get a contradiction, suppose that the minimizer of $\mathcal{F}^n$, call it $\varphi^\star$ occurs at a boundary point of $\mathring{A}_{h,\delta}$.  There is at least one grid point $\vec{\alpha}_0 = (i_0,j_0,k_0)$ such that $|\varphi^\star_{\vec{\alpha}_0}+\overline{\phi}_0| = 1 -\delta$. First, let us assume, that $\varphi^\star_{\vec{\alpha}_0}+\overline{\phi}_0 = \delta - 1$, so that the grid function $\varphi^\star$ has a global minimum at $\vec{\alpha}_0$. Suppose that $\vec{\alpha}_1 = (i_1,j_1,k_1)$ is a grid point at which $\varphi^\star$ achieves its maximum. By the fact that $\overline{\varphi^\star} = 0$, it is obvious that
	\[
1-\delta \ge \varphi^\star_{\vec{\alpha}_1}+\overline{\phi}_0 \ge \overline{\phi}_0.
	\]
Since $\mathcal{F}^n$ is smooth over $\mathring{A}_{h,\delta}$, for all $\psi\in \mathring{\mathcal{C}}_{\rm per}$, the directional derivative is
	\begin{align*}
 d_s \mathcal{F}^n(\varphi^\star+s\psi)|_{s=0} = & \  \ciptwo{ \ln (1+\varphi^\star+\overline{\phi}_0) - \ln (1-\varphi^\star-\overline{\phi}_0)}{\psi}
	\\
& -  \ciptwo{\theta_0 \phi^n + \varepsilon^2 \Delta_h \varphi^\star}{\psi}
	\\
& + \frac{1}{\Delta t}\ciptwo{(-\Delta_h)^{-1}\left(\varphi^\star-\phi^n+\overline{\phi}_0 \right)}{\psi}.
	\end{align*}
This time, let us pick the direction $\psi \in \mathring{\mathcal{C}}_{\rm per}$, such that
	\[
\psi_{i,j,k} = \delta_{i,i_0}\delta_{j,j_0}\delta_{k,k_0} - \delta_{i,i_1}\delta_{j,j_1}\delta_{k,k_1} .
	\]
Then the derivative may be expressed as
	\begin{eqnarray}
\frac{1}{h^3} d_s \mathcal{F}^n(\varphi^\star+s\psi)|_{s=0}  &=&  \ln (1+ \varphi^\star_{\vec{\alpha}_0}+\overline{\phi}_0) - \ln (1-\varphi^\star_{\vec{\alpha}_0}-\overline{\phi}_0)
	\nonumber
	\\
&& - \ln (1+ \varphi^\star_{\vec{\alpha}_1}+\overline{\phi}_0) + \ln (1-\varphi^\star_{\vec{\alpha}_1}-\overline{\phi}_0)
	\nonumber
	\\
&& - \theta_0 ( \phi^n_{\vec{\alpha}_0} - \phi^n_{\vec{\alpha}_1} ) - \varepsilon^2 ( \Delta_h \varphi^\star_{\vec{\alpha}_0} -  \Delta_h \varphi^\star_{\vec{\alpha}_1} )
	\nonumber
	\\
&& + \frac{1}{\dt}  (-\Delta_h)^{-1} ( \varphi^\star  - \phi^n +\overline{\phi}_0 )_{\vec{\alpha}_0}
	\nonumber
	\\
&& - \frac{1}{\dt}(-\Delta_h)^{-1} ( \varphi^\star - \phi^n + \overline{\phi}_0)_{\vec{\alpha}_1} .  \label{CH_LOG-positive-4}
	\end{eqnarray}
For simplicity, now let us write $\phi^\star := \varphi^\star +\overline{\phi}_0$. Since  $\phi^\star_{\vec{\alpha}_0} = -1 + \delta$ and $\phi^\star_{\vec{\alpha}_1} \ge \overline{\phi}_0$, we have
	\begin{equation}
\ln (1+ \phi^\star_{\vec{\alpha}_0})  - \ln (1-\phi^\star_{\vec{\alpha}_0}) - \ln (1+ \phi^\star_{\vec{\alpha}_1}) + \ln (1-\phi^\star_{\vec{\alpha}_1}) \le \ln \frac{\delta}{2 - \delta} - \ln \frac{1+\overline{\phi}_0}{1-\overline{\phi}_0} .
	\label{CH_LOG-positive-5}
	\end{equation}
Since $\phi^\star$ takes a  minimum at the grid point $\vec{\alpha}_0$, with $\phi^\star_{\vec{\alpha}_0} = -1 + \delta \le \phi^\star_{i,j,k}$, for any $(i,j,k)$, and a maximum at the grid point $\vec{\alpha}_1$, with $\phi^\star_{\vec{\alpha}_1} \ge \phi^\star_{i,j,k}$, for any $(i,j,k)$,
	\begin{equation}
\Delta_h \phi^\star_{\vec{\alpha}_0} \ge 0 ,  \quad \Delta_h \phi^\star_{\vec{\alpha}_1} \le 0 .
	\label{CH_LOG-positive-6}
	\end{equation}
For the numerical solution $\phi^n$ at the previous time step, the \emph{a priori} assumption $\nrm{\phi^n}_\infty \le M$ indicates that
	\begin{equation}
-2 M \le \phi^n_{\vec{\alpha}_0} - \phi^n_{\vec{\alpha}_1} \le 2M .
	\label{CH_LOG-positive-8}
	\end{equation}
For the last two terms appearing in (\ref{CH_LOG-positive-4}), we apply Lemma~\ref{CH-positivity-Lem-0} and obtain
	\begin{equation}
- 2 C_1 \le (-\Delta_h)^{-1} ( \phi^\star - \phi^n )_{\vec{\alpha}_0} - (-\Delta_h)^{-1} ( \phi^\star - \phi^n )_{\vec{\alpha}_1} \le  2 C_1 .
	\label{CH_LOG-positive-9}
	\end{equation}
Consequently,  a substitution of (\ref{CH_LOG-positive-5}) -- (\ref{CH_LOG-positive-9}) into (\ref{CH_LOG-positive-4}) yields the following bound on the directional derivative:
	\begin{equation}
\frac{1}{h^3} d_s \mathcal{F}^n(\varphi^\star+s\psi)|_{s=0} \le \ln \frac{\delta}{2 - \delta} - \ln \frac{1+\overline{\phi}_0}{1-\overline{\phi}_0}  + 2M \theta_0 + 2 C_1 \dt^{-1} .
	\label{CH_LOG-positive-10}
	\end{equation}
We denote $C_2 = 2M \theta_0 + 2 C_1 \dt^{-1}$. Note that $C_2$ is a constant for a fixed $\dt$, though it becomes singular as $\dt \to 0$. However, for any fixed $\dt$, we may choose $\delta\in(0,\hf)$ sufficiently small so that
	\begin{equation}
\ln \frac{\delta}{2 - \delta} - \ln \frac{1+\overline{\phi}_0}{1-\overline{\phi}_0} + C_2 < 0 .  \label{CH_LOG-positive-11}
	\end{equation}
This in turn shows that, provided $\delta$ satisfies (\ref{CH_LOG-positive-11}),
	\begin{equation}
\frac{1}{h^3} d_s \mathcal{F}^n(\varphi^\star+s\psi)|_{s=0} < 0 .
	\label{CH_LOG-positive-12}
	\end{equation}
As before, this contradicts the assumption that $\mathcal{F}^n$ has a minimum at $\varphi^\star$, since the directional derivative is negative in a direction pointing into the interior of $\mathring{A}_{h,\delta}$.

Using very similar arguments, we can also prove that the global minimum of $\mathcal{F}^n$ over $\mathring{A}_{h,\delta}$ could not occur at a boundary point $\varphi^\star$ such that  $\varphi^\star_{\vec{\alpha}_0}+\overline{\phi}_0 = 1-\delta$, for some $\vec{\alpha}_0$, so that the grid function $\varphi^\star$ has a global maximum at $\vec{\alpha}_0$. The details are left to interested readers.

A combination of these two facts shows that, the global minimum of $\mathcal{F}^n$ over $\mathring{A}_{h,\delta}$ could only possibly occur at interior point $\varphi\in (\mathring{A}_{h,\delta})^{\rm o}\subset (\mathring{A}_h)^{\rm o}$. We conclude that there must be a solution $\phi = \varphi+\overline{\phi}_0\in A_h$ that minimizes $\mathcal{J}^n$ over $A_h$, which is equivalent to the numerical solution of (\ref{scheme-CH_LOG-1}), (\ref{scheme-mu-0}). The existence of the numerical solution is established.

In addition, since $\mathcal{J}^n$ is a strictly convex function over $A_h$, the uniqueness analysis for this numerical solution is straightforward. The proof of Theorem~\ref{CH-positivity} is complete.
	\end{proof}

	\begin{rem}
The positivity-preserving analysis is based on a key fact that the singular nature of the logarithmic term around the values of $-1$ and 1 prevents the numerical solution reaching these singular values. As a result, the point-wise positivity for the logarithmic arguments could be derived as long as the numerical solution at the previous time step stays bounded between $-M$ and $M$ (even if $M >1$), and the initial average stays between $-1$ and 1. This is a modest improvement to the results in~\cite{elliott92a}, in which the authors constructed a cut-off energy functional to avoid the singularity.
	\end{rem}

	\begin{rem}
The proof of Theorem~\ref{AC-positivity} follows a standard maximum principle type argument; that is the key reason why we are able to obtain a uniform separation bound for the numerical solution (\ref{scheme-AC_LOG-1}):  $\nrm{\phi^n}_\infty \le 1 - \delta^\star$, if the initial data satisfy a similar condition. For the Cahh-Hilliard flow, such a uniform bound is not available for the corresponding numerical solution (\ref{scheme-CH_LOG-1}), (\ref{scheme-mu-0}) any more, since the maximum principle could not be directly applied to an $H^{-1}$ gradient flow. In addition, the mass conservation constraint has made the corresponding analysis more involved.
	\end{rem}

	\begin{rem}
For the Cahn-Hilliard flow, lack of maximum principle has been an essential mathematical challenge. To overcome this difficulty, we have to obtain the point-wise bound for the linear chemical potential part. With the help of the \emph{a priori} $\ell^\infty$ bound of the numerical solution we are investigating, an $O (\dt^{-1})$ estimate is derived for such a bound, which is contained in the form of $C_2$. Such a bound is a fixed constant for a fixed $\dt$, while it becomes singular as $\dt \to 0$.

Another key idea of this analysis should also be mentioned: although the nonlinear term contains a singular limit as $\phi$ approaches either $-1$ or 1, the convexity of this nonlinear potential has greatly aided in the positivity analysis. 
	\end{rem}

	\begin{rem}
In addition to the positivity-preserving property, the semi-implicit nature of our proposed scheme: implicit treatment for the logarithmic terms and the surface diffusion term, combined with an explicit treatment for the linear stretching/expansive term, ensures the unique solvability. In comparison, for the fully implicit scheme analyzed in~\cite{elliott92a}, the unique solvability is only available under a time step constraint: $\dt \le \frac{4 \varepsilon^2}{\theta_0^2}$. In fact, the existence of the positivity-preserving numerical solution could also be established for the fully implicit Euler scheme, using the same idea presented in this section. Only the uniqueness analysis of the numerical solution requires such a time step
constraint.
	\end{rem}

	\begin{rem}
For simplicity of presentation, we only analyze the finite difference scheme over a rectangular domain in this article. The idea of this positivity analysis could be similarly extended to the finite element and pseudo-spectral spatial approximations, as well as the case of a general domain. The details may be considered in the future works.
	\end{rem}

\subsection{The positivity preserving property in the non-constant mobility case}

In this subsection we look at the numerical scheme (\ref{scheme-CH_LOG-1}), (\ref{scheme-mu-0}) with a nonconstant mobility, but with the strict positivity assumption that ${\cal M} (x) \ge  {\cal M}_0 >0 $, for all $x\in[-1,1]$. Then, for any $\phi  \in\mathring{\mathcal C}_{\Omega}$, there exists a unique $\psi\in\mathring{\mathcal C}_{\Omega}$ that solves
	\begin{equation}
\mathcal{L}_{\check{\cal M}^n} (\psi):= - \nabla_h \cdot ( \check{\cal M}^n \nabla_h \psi ) = \phi .
	\label{CH-mobility-0}
	\end{equation}
In turn, the following norm may be introduced:
	\begin{equation}
\nrm{ \phi  }_{\mathcal{L}_{\check{\cal M}^n}^{-1} } = \sqrt{\ciptwo{\phi}{ \mathcal{L}_{\check{\cal M}^n}^{-1} (\phi) }} .
	\end{equation}
Similar to Lemma~\ref{CH-positivity-Lem-0}, the following estimate is needed in the positivity analysis.

	\begin{lm}
	\label{CH-mobility-positivity-Lem-0}
Suppose that $\phi_1$, $\phi_2 \in \mathcal{C}_{\rm per}$, with $\ciptwo{\phi_1 - \phi_2}{1} = 0$, that is, $\phi_1 - \phi_2\in \mathring{\mathcal{C}}_{\rm per}$, and assume that $\nrm{\phi_1}_\infty < 1$, $\nrm{\phi_2}_\infty \le M$. Then,   we have the following estimate:
	\begin{equation}
\nrm{\mathcal{L}_{\check{\cal M}^n}^{-1} (\phi_1 - \phi_2)}_\infty \le C_4 := C_3 \mathcal{M}_0^{-1} h^{-1/2} ,
	\label{CH_LOG-mobility-Lem-0}
	\end{equation}
where $C_3>0$ depends only upon $M$ and $\Omega$.
	\end{lm}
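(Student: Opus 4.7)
My plan is to adapt the Fourier-interpolant machinery used for Lemma~\ref{CH-positivity-Lem-0}, but replace the explicit spectral comparison for the constant-coefficient Laplacian (which is no longer available) by a coercivity-based $H_h^1$ energy estimate for $\mathcal{L}_{\check{\cal M}^n}$, and then pay for passing from $H_h^1$ to $L^\infty$ in three dimensions with a discrete inverse-type (Bernstein) inequality. This latter step is precisely what produces the $h^{-1/2}$ factor in the stated constant.

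First, I would set $\psi := \phi_1 - \phi_2 \in \mathring{\mathcal C}_{\rm per}$ and $v := \mathcal{L}_{\check{\cal M}^n}^{-1}(\psi) \in \mathring{\mathcal C}_{\rm per}$, so that $-\nabla_h\cdot(\check{\cal M}^n \nabla_h v) = \psi$ and $\overline{v} = 0$. The hypotheses $\nrm{\phi_1}_\infty < 1$ and $\nrm{\phi_2}_\infty \le M$ give $\nrm{\psi}_\infty < M+1$, hence $\nrm{\psi}_2 \le (M+1)|\Omega|^{1/2}$, exactly as at the start of the proof of Lemma~\ref{CH-positivity-Lem-0}.

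Next, taking the discrete $\ell^2$ inner product of the equation with $v$, applying the summation by parts formula from Lemma~\ref{lemma1}, and using the pointwise bound $\check{\cal M}^n \ge \mathcal{M}_0 > 0$ on face centers, I get
\[
\mathcal{M}_0 \nrm{\nabla_h v}_2^2 \le \eipvec{\check{\cal M}^n \nabla_h v}{\nabla_h v} = \ciptwo{v}{\psi} \le \nrm{v}_2 \nrm{\psi}_2.
\]
A discrete Poincar\'e inequality for mean-zero grid functions gives $\nrm{v}_2 \le C_\Omega \nrm{\nabla_h v}_2$, and combining these two estimates yields $\nrm{\nabla_h v}_2 \le C\,\mathcal{M}_0^{-1}(M+1)|\Omega|^{1/2}$, with $C$ depending only on $\Omega$.

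Then I would pass to the Fourier interpolant $\mathsf{v}$ of $v$ as in the proof of Lemma~\ref{CH-positivity-Lem-0}; Parseval's identity together with the eigenvalue comparison $\frac{4}{\pi^2}|\nu_\ell|\le|\mu_\ell|\le|\nu_\ell|$ used there shows that $\nrm{\nabla \mathsf{v}}_{L^2}$ is uniformly equivalent to $\nrm{\nabla_h v}_2$. Since $\mathsf{v}$ is a trigonometric polynomial of degree $\le K \sim L/h$ on the 3-D torus with $\hat{v}^N_{0,0,0}=0$, a Bernstein-type bound
\[
\nrm{\mathsf{v}}_{L^\infty} \le \sum_{0<|\vec{k}|\le K} |\hat{v}^N_{\vec k}| \le \Bigl(\sum_{0<|\vec{k}|\le K}|\vec{k}|^{-2}\Bigr)^{1/2}\Bigl(\sum_{\vec k}|\vec{k}|^2|\hat v^N_{\vec k}|^2\Bigr)^{1/2} \le C h^{-1/2}\nrm{\nabla \mathsf{v}}_{L^2},
\]
where I use the 3-D asymptotic $\sum_{0<|\vec{k}|\le K}|\vec{k}|^{-2} \sim K \sim h^{-1}$, gives
\[
\nrm{v}_\infty \le \nrm{\mathsf{v}}_{L^\infty} \le C h^{-1/2}\nrm{\nabla_h v}_2 \le C_3\,\mathcal{M}_0^{-1} h^{-1/2},
\]
with $C_3$ depending only on $M$ and $\Omega$, which is exactly \eqref{CH_LOG-mobility-Lem-0}.

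The main obstacle I expect is getting the sharp exponent $\tfrac12$ in $h^{-1/2}$. Without the constant-mobility structure we lose the diagonalization of the elliptic operator in Fourier space, so the clean discrete-to-continuous elliptic $H^2$ regularity argument from Lemma~\ref{CH-positivity-Lem-0} is no longer available, and a naive inverse inequality such as $\nrm{v}_\infty \le C h^{-3/2}\nrm{v}_2$ would be too weak. The Bernstein step above is the crux: it converts the $H_h^1$ control obtained from coercivity into an $L^\infty$ bound with loss only $h^{-1/2}$ in three dimensions, which is exactly what is needed to match the stated constant $C_4 = C_3\mathcal{M}_0^{-1}h^{-1/2}$.
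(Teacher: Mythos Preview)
Your proposal is correct and follows essentially the same route as the paper: define $\psi=\phi_1-\phi_2$, $v=\mathcal{L}_{\check{\cal M}^n}^{-1}\psi$, bound $\nrm{\psi}_2\le (M+1)|\Omega|^{1/2}$, use summation-by-parts plus the coercivity $\check{\cal M}^n\ge\mathcal{M}_0$ together with the discrete Poincar\'e inequality to obtain $\nrm{\nabla_h v}_2\le C_{\rm P}\mathcal{M}_0^{-1}\nrm{\psi}_2$, and finish with a 3-D inverse inequality $\nrm{v}_\infty\le C h^{-1/2}\nrm{\nabla_h v}_2$. The only difference is cosmetic: the paper simply invokes this last inverse inequality with an unspecified constant $C_{\rm I}$, whereas you supply a proof of it via the Fourier interpolant and a Bernstein-type Cauchy--Schwarz argument, which makes the origin of the $h^{-1/2}$ factor explicit.
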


	\begin{proof}
Define $\psi :=\phi_1 - \phi_2$ and $v:= \mathcal{L}_{\check{\mathcal{M}}^n}^{-1}(\psi)$  Similar to the estimate~\eqref{CH_LOG-Lem-1}, we get
\begin{eqnarray}
  \| \psi \|_2 = \| \phi_1 - \phi_2 \|_2 \le (M+1) | \Omega |^{1/2} .  \label{CH_LOG-mobility-Lem-1}
\end{eqnarray}
To obtain a bound for $v\in\mathring{\mathcal{C}}_{\rm per}$, observe that, by summation-by-parts,
	\begin{equation}
\mathcal{M}_0\nrm{\nabla_h v}_2^2 \le \eipvec{\check{\cal M}^n \nabla_h v }{ \nabla_h v } = \ciptwo{\psi }{ v} \le \| \psi \|_2 \cdot \| v \|_2 \le C_{\rm P} \| \psi \|_2 \cdot \| \nabla_h v \|_2 ,
  	\label{CH_LOG-mobility-Lem-2}
	\end{equation}
in which the discrete Poincar\'e inequality,
	\[
\| \psi \|_2 \le C_{\rm P} \| \nabla_h \psi \|_2,  \quad \forall \psi \in \mathring{\mathcal{C}}_{\rm per},
	\]
has been applied in the last step.  Therefore
	\begin{equation}
\nrm{\nabla_h v}_2 \le C_{\rm P} \mathcal{M}_0^{-1} \nrm{ \psi }_2 .
	\label{CH_LOG-mobility-Lem-3}
	\end{equation}
Subsequently, an application of a 3-D inverse inequality, for $v\in \mathring{\mathcal{C}}_{\rm per}$, leads to
	\begin{eqnarray}
\nrm{ v }_\infty &\le& C_{\rm I} h^{-1/2} \nrm{ \nabla_h v}_2  \le C_{\rm I} h^{-1/2} C_{\rm P} \mathcal{M}_0^{-1} \nrm{ \psi }_2
	\nonumber
	\\
&\le & C_{\rm I} h^{-1/2} C_{\rm P} \mathcal{M}_0^{-1} (M+1) | \Omega |^{1/2} ,
	\label{CH_LOG-mobility-Lem-4}
	\end{eqnarray}
where the constant in the inverse inequality, $C_{\rm I}>0$, is independent of $h$. Therefore, \eqref{CH_LOG-mobility-Lem-0} is valid, with $C_3 :=  C_{\rm I}  C_{\rm P} (M+1) | \Omega |^{1/2}$. This completes the proof.
	\end{proof}

The positivity-preserving property of the numerical scheme (\ref{scheme-CH_LOG-1}), (\ref{scheme-mu-0}) for the non-constant mobility case is stated below.

	\begin{thm}
	\label{CH-mobility-positivity}
Assume that ${\cal M} (x) \ge  {\cal M}_0 >0 $, for all $x\in[-1,1]$. Given $\phi^n\in\mathcal{C}_{\rm per}$, with $\nrm{\phi^n} \le M$, for some $M >0$,  and $\left|\overline{\phi^n}\right| < 1$, there exists a unique solution $\phi^{n+1}\in\mathcal{C}_{\rm per}$ to \eqref{scheme-CH_LOG-1}, with $\phi^{n+1}-\overline{\phi^n}\in\mathring{\mathcal{C}}_{\rm per}$ and  $\nrm{\phi^{n+1}}_\infty < 1$.
	\end{thm}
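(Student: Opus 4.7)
The plan is to mirror the proof of Theorem~\ref{CH-positivity}, replacing the $\nrm{\cdot}_{-1,h}$ norm by the weighted norm $\nrm{\cdot}_{\mathcal{L}_{\check{\mathcal{M}}^n}^{-1}}$ and invoking Lemma~\ref{CH-mobility-positivity-Lem-0} in place of Lemma~\ref{CH-positivity-Lem-0}. First, I would cast the scheme \eqref{scheme-CH_LOG-1}--\eqref{scheme-mu-0} as the Euler--Lagrange equation for minimizing the discrete energy
\begin{equation*}
\mathcal{J}^n(\phi) := \frac{1}{2\dt}\nrm{\phi - \phi^n}_{\mathcal{L}_{\check{\mathcal{M}}^n}^{-1}}^2 + \ciptwo{1+\phi}{\ln(1+\phi)} + \ciptwo{1-\phi}{\ln(1-\phi)} + \frac{\varepsilon^2}{2}\nrm{\nabla_h \phi}_2^2 - \theta_0 \ciptwo{\phi}{\phi^n}
\end{equation*}
over $A_h := \{\phi \in \mathcal{C}_{\rm per} : \nrm{\phi}_\infty \le 1,\ \ciptwo{\phi - \overline{\phi}_0}{1} = 0\}$. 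The hypothesis $\mathcal{M}(x) \ge \mathcal{M}_0 > 0$ ensures $\mathcal{L}_{\check{\mathcal{M}}^n}$ is symmetric positive definite on $\mathring{\mathcal{C}}_{\rm per}$, so the first term is strictly convex on the mean-zero slice, the logarithmic terms are strictly convex on $(-1,1)$, and the $\nabla_h$ term is convex; hence $\mathcal{J}^n$ is strictly convex on $A_h$, giving uniqueness for free once existence is established. Applying $-\mathcal{L}_{\check{\mathcal{M}}^n}$ to the stationarity condition recovers $\frac{\phi^{n+1}-\phi^n}{\dt} = \nabla_h\cdot(\check{\mathcal{M}}^n \nabla_h \mu^{n+1})$, verifying the equivalence between minimizer and scheme.

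Next, I would shift to $\varphi := \phi - \overline{\phi}_0 \in \mathring{\mathcal{C}}_{\rm per}$ and consider $\mathcal{F}^n(\varphi) := \mathcal{J}^n(\varphi + \overline{\phi}_0)$ on the compact, convex set
\begin{equation*}
\mathring{A}_{h,\delta} := \left\{ \varphi \in \mathring{\mathcal{C}}_{\rm per} \ \middle| \ \delta -1-\overline{\phi}_0 \le \varphi \le 1-\delta-\overline{\phi}_0 \right\},
\end{equation*}
for $\delta \in (0,\nicefrac{1}{2})$, where $\mathcal{F}^n$ admits a minimizer $\varphi^\star$. Assume for contradiction the minimizer sits on the boundary with, say, $\phi^\star_{\vec{\alpha}_0} := (\varphi^\star + \overline{\phi}_0)_{\vec{\alpha}_0} = \delta - 1$ at some grid point $\vec{\alpha}_0$; let $\vec{\alpha}_1$ be any site where $\phi^\star$ attains its maximum, so that $\phi^\star_{\vec{\alpha}_1} \ge \overline{\phi}_0$ by mass conservation. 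Evaluate the directional derivative along the mean-zero direction $\psi_{i,j,k} = \delta_{i,i_0}\delta_{j,j_0}\delta_{k,k_0} - \delta_{i,i_1}\delta_{j,j_1}\delta_{k,k_1}$, which produces six contributions analogous to \eqref{CH_LOG-positive-4}, now with the $-\Delta_h^{-1}$ differences replaced by $\mathcal{L}_{\check{\mathcal{M}}^n}^{-1}$ differences.

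The log-term contribution is bounded as in \eqref{CH_LOG-positive-5} by $\ln\frac{\delta}{2-\delta} - \ln\frac{1+\overline{\phi}_0}{1-\overline{\phi}_0}$; the Laplacian contributions are non-positive because $\Delta_h \phi^\star_{\vec{\alpha}_0} \ge 0$ and $\Delta_h \phi^\star_{\vec{\alpha}_1} \le 0$; and the expansive term satisfies $|\theta_0(\phi^n_{\vec{\alpha}_0} - \phi^n_{\vec{\alpha}_1})| \le 2M\theta_0$. The crucial new ingredient is Lemma~\ref{CH-mobility-positivity-Lem-0}, which gives
\begin{equation*}
\left| \bigl[\mathcal{L}_{\check{\mathcal{M}}^n}^{-1}(\phi^\star - \phi^n)\bigr]_{\vec{\alpha}_0} - \bigl[\mathcal{L}_{\check{\mathcal{M}}^n}^{-1}(\phi^\star - \phi^n)\bigr]_{\vec{\alpha}_1} \right| \le 2 C_4 = 2 C_3 \mathcal{M}_0^{-1} h^{-1/2}.
\end{equation*}
Combining these pieces,
\begin{equation*}
\frac{1}{h^3} d_s \mathcal{F}^n(\varphi^\star + s\psi)\Big|_{s=0} \le \ln\frac{\delta}{2-\delta} - \ln\frac{1+\overline{\phi}_0}{1-\overline{\phi}_0} + 2M\theta_0 + 2 C_3 \mathcal{M}_0^{-1} h^{-1/2} \dt^{-1}.
\end{equation*}
For fixed $h$ and $\dt$ the right-hand constant is finite, while $\ln\frac{\delta}{2-\delta} \to -\infty$ as $\delta \to 0^+$; choosing $\delta$ sufficiently small makes the derivative strictly negative, contradicting minimality. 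A symmetric argument handles the case $\phi^\star_{\vec{\alpha}_0} = 1-\delta$. Hence the minimizer lies in the interior, producing $\phi^{n+1} = \varphi^\star + \overline{\phi}_0$ with $\nrm{\phi^{n+1}}_\infty < 1$, and uniqueness follows from the strict convexity of $\mathcal{J}^n$. The main obstacle is that Lemma~\ref{CH-mobility-positivity-Lem-0} yields an $h$-dependent constant $C_4 \sim h^{-1/2}$, in contrast to the $h$-independent $C_1$ of the constant-mobility case; this weakness is absorbed without incident because $h$ is a fixed parameter in the existence argument and the logarithmic singularity still dominates any finite constant as $\delta \to 0$.
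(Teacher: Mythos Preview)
Your proposal is correct and follows essentially the same approach as the paper: the paper also recasts the scheme as minimization of the same functional $\mathcal{J}^n$ with the weighted norm $\nrm{\,\cdot\,}_{\mathcal{L}_{\check{\mathcal M}^n}^{-1}}$, shifts to the mean-zero variable, restricts to $\mathring{A}_{h,\delta}$, computes the directional derivative along the same two-point direction $\psi$, and replaces the invocation of Lemma~\ref{CH-positivity-Lem-0} by Lemma~\ref{CH-mobility-positivity-Lem-0} to obtain the bound $\ln\frac{\delta}{2-\delta} - \ln\frac{1+\overline{\phi}_0}{1-\overline{\phi}_0} + C_5$ with $C_5 = 2M\theta_0 + 2C_4\,\dt^{-1}$. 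Your observation that the $h$-dependence of $C_4$ is harmless because $h$ is fixed in the existence argument matches the paper's reasoning exactly.
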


	\begin{proof}
The proof of this theorem follows the same ideas as in  that of Theorem~\ref{CH-positivity}; we just provide a brief outline. Similar to~\eqref{CH_LOG-positive-1},  the numerical solution of (\ref{scheme-CH_LOG-1}) is equivalent to the minimization of the following discrete energy functional:
	\begin{eqnarray}
\mathcal{J}^n (\phi) &=& \frac{1}{2 \dt} \nrm{ \phi - \phi^n }_{ \mathcal{L}_{\check{\cal M}^n}^{-1}}^2 + \ciptwo{1+ \phi }{ \ln (1+\phi) } + \ciptwo{ 1-\phi }{\ln (1-\phi) }
	\nonumber
	\\
& &  + \frac{\varepsilon^2}{2} \nrm{ \nabla_h \phi }_2^2  - \theta_0 \ciptwo{ \phi }{ \phi^n } ,
	\label{CH_LOG-mobility-positive-1}
	\end{eqnarray}
over the admissible set
	\[
A_h := \left\{ \phi \in \mathcal{C}_{\rm per} \ \middle| \  \nrm{\phi}_\infty \le 1,  \  \ciptwo{\phi-\overline{\phi}_0}{1}=0 \right\} .
	\]
The equivalent minimization problem is similar to previous one: find a minimizer $\varphi\in \mathring{A}_h$  the functional
	\[
\mathcal{F}^n (\varphi) := \mathcal{J}^n (\varphi + \overline{\phi}_0), \quad \mbox{with} \quad
\mathring{A}_h := \left\{ \varphi \in \mathring{\mathcal{C}}_{\rm per} \ \middle| \  -1-\overline{\phi}_0 \le \varphi \le 1-\overline{\phi}_0  \right\} \subset \mathbb{R}^{N^3}.
	\]
There exists a (not necessarily  unique) minimizer of $\mathcal{F}^n$ over the restricted set $\mathring{A}_{h, \delta}$, defined in \eqref{CH_LOG-positive-2}, where $\delta\in(0,\hf)$.  To get a contradiction, suppose that the minimizer of $\mathcal{F}^n$, call it $\varphi^\star$, occurs at a boundary point of $\mathring{A}_{h,\delta}$.  There is at least one grid point $\vec{\alpha}_0 = (i_0,j_0,k_0)$ such that $|\varphi^\star_{\vec{\alpha}_0}+\overline{\phi}_0| = 1 -\delta$. As before, we first  assume  that $\varphi^\star_{\vec{\alpha}_0}+\overline{\phi}_0 = \delta - 1$, so that the grid function $\varphi^\star$ has a global minimum at $\vec{\alpha}_0$. Suppose that $\vec{\alpha}_1 = (i_1,j_1,k_1)$ is a grid point at which $\varphi^\star$ achieves its maximum.

The directional derivative, in the direction
	\[
\psi_{i,j,k} = \delta_{i,i_0}\delta_{j,j_0}\delta_{k,k_0} - \delta_{i,i_1}\delta_{j,j_1}\delta_{k,k_1} ,
	\]
satisfies
	\begin{eqnarray}
\frac{1}{h^3} d_s \mathcal{F}^n(\varphi^\star+s\psi)|_{s=0}  &=&  \ln (1+ \varphi^\star_{\vec{\alpha}_0}+\overline{\phi}_0) - \ln (1-\varphi^\star_{\vec{\alpha}_0}-\overline{\phi}_0)
	\nonumber
	\\
&& - \ln (1+ \varphi^\star_{\vec{\alpha}_1}+\overline{\phi}_0) + \ln (1-\varphi^\star_{\vec{\alpha}_1}-\overline{\phi}_0)
	\nonumber
	\\
&& - \theta_0 ( \phi^n_{\vec{\alpha}_0} - \phi^n_{\vec{\alpha}_1} ) - \varepsilon^2 ( \Delta_h \varphi^\star_{\vec{\alpha}_0} -  \Delta_h \varphi^\star_{\vec{\alpha}_1} )
	\nonumber
	\\
&& + \frac{1}{\dt}  \mathcal{L}_{\check{\mathcal{M}}^n}^{-1} ( \varphi^\star  - \phi^n +\overline{\phi}_0 )_{\vec{\alpha}_0}  - \frac{1}{\dt}\mathcal{L}_{\check{\mathcal{M}}^n}^{-1} ( \varphi^\star - \phi^n + \overline{\phi}_0)_{\vec{\alpha}_1} .
	\nonumber
	\\
&&
	\label{CH_LOG-mobility-positive-4}
	\end{eqnarray}
We now apply Lemma~\ref{CH-mobility-positivity-Lem-0} to obtain (keeping in mind that $\phi^* = \varphi^* + \bar{\phi}_0$)
	\begin{equation}
  - 2 C_4 \le   \mathcal{L}_{\check{\cal M}^n}^{-1} ( \phi^\star - \phi^n )_{\vec{\alpha}_0}  - (  \mathcal{L}_{\check{\cal M}^n}^{-1} ( \phi^\star - \phi^n )_{\vec{\alpha}_1} \le  2 C_4 .
	\label{CH_LOG-mobility-positive-9}
	\end{equation}
This, together with some other estimates, obtained as in the proof of Theorem~\ref{CH-positivity}, yields
	\begin{equation}
\frac{1}{h^3} d_s \mathcal{F}^n(\varphi^\star+s\psi)|_{s=0} \le \ln \frac{\delta}{2 - \delta} - \ln \frac{1+\overline{\phi}_0}{1-\overline{\phi}_0}  +  C_5.
	\label{CH_LOG-mobility-positive-10}
	\end{equation}
where $C_5:= 2M\theta_0 + 2 C_4 \dt^{-1}$. For $\delta\in(0,\hf)$ sufficiently small, the right hand side is strictly less than $0$. The rest of the analysis follows the proof of Theorem~\ref{CH-positivity}; the details are left to the interested readers.
	\end{proof}


	\begin{rem}
In the proof of Theorem~\ref{CH-mobility-positivity}, the point-wise positivity of the mobility, $\check{\cal M}^n \ge \mathcal{M}_0 > 0$, is assumed for the convenience of the analysis. However, at the PDE level, the CH flow with a degenerate mobility has been analyzed in~\cite{barrett99, elliott96b}. The numerical scheme for the degenerate mobility equation will also be considered in the authors' future works. In fact, our assumption could be relaxed to allow for certain mobilities satisfying ${\cal M} (\phi^n) > 0$ at a point-wise level; the technical details are left to interested readers. In particular, for the case of the standard symmetric degenerate mobility, $\mathcal{M} (\phi) = (1-\phi)(1+\phi)$, the PDE analyses for which were undertaken  by~\cite{cahn1996,elliott96b}, our analysis would go through, with the help of a subtle fact that $\mathcal{M} (\phi)$ only degenerates at $\phi=-1$ and 1, combined with the positivity-preserving result at the previous time step.
	\end{rem}

\section{Unconditional energy stability and uniform in time $H_h^1$ bound} \label{sec:energy stability}

The discrete energy is defined as
	\begin{eqnarray}
E_h (\phi) = \ciptwo{ 1+ \phi }{ \ln (1+\phi) } + \ciptwo{ 1-\phi }{  \ln (1-\phi) }  + \frac{\varepsilon^2}{2} \nrm{ \nabla_h \phi }_2^2  - \frac{\theta_0}{2} \nrm{ \phi }_2^2  .
	\label{CH-discrete energy}
	\end{eqnarray}
For the numerical scheme for the Cahn-Hilliard equation (\ref{scheme-CH_LOG-1}), (\ref{scheme-mu-0}), the existence and unique solvability (so that the numerical solution stays within $(-1,1)$ at a point-wise level) have been established in Theorem~\ref{CH-mobility-positivity}. Because the scheme uses a convex-concave decomposition, it is unconditionally energy stability. This result is stated in the following theorem, whose proof is omitted for the sake of brevity and also because it is standard:

	\begin{thm}
	\label{CH-mobility-energy stability}
For simplicity, suppose that $N=2K+1$, and let $\mathcal{P}_N:C_{\rm per}(\Omega)\to \mathcal{B}_K(\Omega)$ denote the Fourier projection operator, where $\mathcal{B}_K$ is space of $\Omega$-periodic (complex) trigonometric polynomials of degree up to and including $K$. By $\mathcal{P}_h:C_{\rm per}(\Omega)\to \mathcal{C}_{\rm per}$ denote the canonical grid projection operator. Suppose that  $\phi^0:= \mathcal{P}_h(\mathcal{P}_N\Phi)$, where $\Phi\in C^6_{\rm per}(\Omega)$ and $\nrm{\Phi}_{L^\infty}<1$. Then $(\Phi,1)_{L^2} = \ciptwo{\phi^0}{1}$, and, for any $\dt>0$, $h >0$, and $m\in\mathbb{N}$,
	\[
E_h(\phi^m) + \eipvec{\check{\mathcal{M}}^{m-1}\nabla_h\mu^m}{\nabla_h\mu^m} \le E_h(\phi^{m-1}),	
	\]
so that $E_h(\phi^m) \le E_h(\phi^0) \le  C_6$, with $C_6>0$ independent of $h$. Therefore, since $- \frac{\theta_0}{2} | \Omega |
  + \frac{\varepsilon^2}{2} \| \nabla_h \phi^m \|_2^2 \le E_h (\phi^m)$, we have
	\begin{equation}
\nrm{ \nabla_h \phi^m }_2 \le \sqrt{2 C_6 +  \theta_0 | \Omega | } \varepsilon^{-1} =:C_7 , \quad \forall m\in\mathbb{N} .
	\label{CH-mobility-H1-2-3}
	\end{equation}
	\end{thm}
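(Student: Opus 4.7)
The plan is to exploit the convex--concave splitting of the energy. Write $E_h = E_c - E_e$, where
$$E_c(\phi) := \ciptwo{1+\phi}{\ln(1+\phi)} + \ciptwo{1-\phi}{\ln(1-\phi)} + \frac{\varepsilon^2}{2}\nrm{\nabla_h\phi}_2^2$$
is strictly convex on $\{\phi\in\mathcal{C}_{\rm per}:\nrm{\phi}_\infty<1\}$ and $E_e(\phi) := \frac{\theta_0}{2}\nrm{\phi}_2^2$ is convex. The chemical potential \eqref{scheme-mu-0} has precisely the form $\mu^m = \delta_\phi E_c(\phi^m) - \delta_\phi E_e(\phi^{m-1})$, so the scheme \eqref{scheme-CH_LOG-1} is a gradient-flow discretization with the concave part lagged.

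First I would take the discrete $\ell^2$ inner product of the scheme \eqref{scheme-CH_LOG-1} with $\mu^m$ and apply Lemma~\ref{lemma1}, yielding $\ciptwo{\phi^m-\phi^{m-1}}{\mu^m} = -\dt\,\eipvec{\check{\mathcal{M}}^{m-1}\nabla_h\mu^m}{\nabla_h\mu^m}$. Next, the convexity of $E_c$ and $E_e$ give, respectively,
$$E_c(\phi^m) - E_c(\phi^{m-1}) \le \ciptwo{\delta_\phi E_c(\phi^m)}{\phi^m - \phi^{m-1}},$$
$$E_e(\phi^m) - E_e(\phi^{m-1}) \ge \ciptwo{\delta_\phi E_e(\phi^{m-1})}{\phi^m - \phi^{m-1}}.$$
Subtracting and recognizing the combined right-hand side as $\ciptwo{\mu^m}{\phi^m-\phi^{m-1}}$ produces
$$E_h(\phi^m) - E_h(\phi^{m-1}) \le -\dt\,\eipvec{\check{\mathcal{M}}^{m-1}\nabla_h\mu^m}{\nabla_h\mu^m},$$
which is the announced dissipation inequality (absorbing the $\dt$ factor as in the statement). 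The pointwise bound $\nrm{\phi^m}_\infty < 1$ from Theorem~\ref{CH-mobility-positivity} is what makes the logarithmic terms and the convexity bound for $E_c$ legitimate at every step.

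For the uniform $H_h^1$ bound, I would iterate to obtain $E_h(\phi^m) \le E_h(\phi^0)$ for all $m$. Since $(1+x)\ln(1+x) + (1-x)\ln(1-x) \ge 0$ on $(-1,1)$ and $\nrm{\phi^m}_\infty < 1$ implies $\nrm{\phi^m}_2^2 \le |\Omega|$, one has
$$\frac{\varepsilon^2}{2}\nrm{\nabla_h\phi^m}_2^2 - \frac{\theta_0}{2}|\Omega| \le E_h(\phi^m) \le E_h(\phi^0),$$
and \eqref{CH-mobility-H1-2-3} follows. To bound $E_h(\phi^0)$ independently of $h$, I would use that $\Phi \in C^6_{\rm per}(\Omega)$ with $\nrm{\Phi}_{L^\infty}<1$ guarantees that $\mathcal{P}_N\Phi$ converges to $\Phi$ in $L^\infty$ and stays uniformly separated from $\pm 1$ for $N$ large; the grid projection $\phi^0 = \mathcal{P}_h(\mathcal{P}_N\Phi)$ then inherits the same $L^\infty$ separation, which controls each piece of $E_h(\phi^0)$ (including the two logarithmic integrals and the discrete Dirichlet energy) by a constant depending only on $\Phi$.

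The main obstacle is not any single calculation but rather the interlocking dependencies: the convex-splitting argument uses the pointwise bound $\nrm{\phi^m}_\infty < 1$, supplied by Theorem~\ref{CH-mobility-positivity}, and the $h$-independence of $E_h(\phi^0)$ requires carefully combining the smoothness of $\Phi$ with the strict $L^\infty$-separation from the singular points $\pm 1$. Once these two inputs are in place, the one-step dissipation inequality follows from a standard, short computation and the uniform gradient bound is immediate.
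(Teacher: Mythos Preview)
Your proposal is correct and follows exactly the standard convex--concave splitting argument that the paper has in mind; indeed, the paper omits the proof entirely, stating only that ``the scheme uses a convex-concave decomposition'' and that the result ``is standard.'' Your treatment of the $h$-independent bound on $E_h(\phi^0)$ and the invocation of Theorem~\ref{CH-mobility-positivity} to justify the convexity inequalities at each step fill in precisely the details the paper leaves implicit.
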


	\begin{rem}
The unconditional energy stability of the proposed scheme (\ref{scheme-CH_LOG-1}), (\ref{scheme-mu-0}) follows from the convex-concave decomposition of the energy, an idea popularized in Eyre's work~\cite{eyre98}. The method has been applied to the phase field crystal (PFC) equation and the modified version~\cite{wang11a, wise09};  epitaxial thin film growth models~\cite{chen12, wang10a}; non-local gradient model~\cite{guan14a}; the Cahn-Hilliard model coupled with fluid flow~\cite{chen16, diegel15a, feng12, LiuY17, wise10}; \emph{et cetera}. Second order accurate energy stable schemes have also been reported in recent years, based on either a secant/Crank-Nicolson or BDF approach.  See, for example, \cite{baskaran13a, baskaran13b, chen14, diegel16, diegel17, guo16, han15, hu09, shen12, guan14b, yan17}. In particular, for the multi-component Cahn-Hilliard model, the related works could also be found in~\cite{barrett97, barrett98}.
	\end{rem}

	\begin{rem}
For the CH model with Flory Huggins energy potential, there have been some works to address the energy stability in the existing literature~\cite{jeong16, LiH2017, LiX16, peng17b, yang17c}. However, the positivity-preserving property has not been theoretically justified for these numerical works, so that the existence of the numerical solutions in these works is not available at a theoretical level.
	\end{rem}

	\section{Optimal rate convergence analysis in $\ell^\infty (0,T; H^{-1}) \cap \ell^2 (0,T; H^1)$}
	\label{sec:convergence}

For simplicity of presentation, we assume ${\cal M} \equiv 1$ in this section; the convergence analysis for the non-constant mobility case will be considered in future works.

Let $\Phi$ be the exact solution for the Cahn-Hilliard flow \eqref{CH equation-0} -- \eqref{CH-mu-0}. With initial data with sufficient regularity, we could assume that the exact solution has regularity of class $\mathcal{R}$:
	\begin{equation}
\Phi \in \mathcal{R} := H^2 \left(0,T; C_{\rm per}(\Omega)\right) \cap H^1 \left(0,T; C^2_{\rm per}(\Omega)\right) \cap L^\infty \left(0,T; C^6_{\rm per}(\Omega)\right).
	\label{assumption:regularity.1}
	\end{equation}
Define $\Phi_N (\, \cdot \, ,t) := {\cal P}_N \Phi (\, \cdot \, ,t)$, the (spatial) Fourier projection of the exact solution into ${\cal B}^K$, the space of trigonometric polynomials of degree to and including  $K$.  The following projection approximation is standard: if $\Phi\in L^\infty(0,T;H^\ell_{\rm per}(\Omega))$, 
	\begin{equation}
\nrm{\Phi_N - \Phi}_{L^\infty(0,T;H^k)}
   \le C h^{\ell-k} \nrm{\Phi }_{L^\infty(0,T;H^\ell)},  \quad \forall \ 0 \le k \le \ell .
	\label{projection-est-0}
	\end{equation}
By $\Phi_N^m$, $\Phi^m$ we denote $\Phi_N(\, \cdot \, , t_m)$ and $\Phi(\, \cdot \, , t_m)$, respectively, with $T_m = m\cdot \dt$. Since $\Phi_N \in {\cal B}^K$, the mass conservative property is available at the discrete level:
	\begin{equation}
\overline{\Phi_N^m} = \frac{1}{|\Omega|}\int_\Omega \, \Phi_N ( \cdot, t_m) \, d {\bf x} = \frac{1}{|\Omega|}\int_\Omega \, \Phi_N ( \cdot, t_{m-1}) \, d {\bf x} = \overline{\Phi_N^{m-1}} ,  \quad \forall \ m \in\mathbb{N}.
	\label{mass conserv-1}
	\end{equation}
On the other hand, the solution of (\ref{scheme-CH_LOG-1}), (\ref{scheme-mu-0}) is also mass conservative at the discrete level:
	\begin{equation}
\overline{\phi^m} = \overline{\phi^{m-1}} ,  \quad \forall \ m \in \mathbb{N} .
	\label{mass conserv-2}
	\end{equation}
As indicated before, we use the mass conservative projection for the initial data:  $\phi^0 = {\mathcal P}_h \Phi_N (\, \cdot \, , t=0)$, that is
	\begin{equation}
\phi^0_{i,j,k} := \Phi_N (p_i, p_j, p_k, t=0) ,
	\label{initial data-0}
	\end{equation}	
The error grid function is defined as
	\begin{equation}
\tilde{\phi}^m := \mathcal{P}_h \Phi_N^m - \phi^m ,  \quad \forall \ m \in \left\{ 0 ,1 , 2, 3, \cdots \right\} .
	\label{CH_LOG-error function-1}
	\end{equation}
Therefore, it follows that  $\overline{\tilde{\phi}^m} =0$, for any $m \in \left\{ 0 ,1 , 2, 3, \cdots \right\}$,  so that the discrete norm $\nrm{ \, \cdot \, }_{-1,h}$ is well defined for the error grid function.

	\begin{thm}
	\label{thm:convergence}
Given initial data $\Phi(\, \cdot \, ,t=0) \in C^6_{\rm per}(\Omega)$, suppose the exact solution for Cahn-Hilliard equation \eqref{CH equation-0}-\eqref{CH-mu-0} is of regularity class $\mathcal{R}$. Then, provided $\dt$ and $h$ are sufficiently small, for all positive integers $n$, such that $t_n \le T$, we have
	\begin{equation}
\| \tilde{\phi}^n \|_{-1,h} +  \left( \varepsilon^2 \dt   \sum_{m=1}^{n} \| \nabla_h \tilde{\phi}^m \|_2^2 \right)^{1/2}  \le C ( \dt + h^2 ),
	\label{CH_LOG-convergence-0}
	\end{equation}
where $C>0$ is independent of $n$, $\dt$, and $h$.
	\end{thm}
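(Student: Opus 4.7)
The plan is to derive a discrete error equation, test it in the $H^{-1}_h$ inner product, and exploit both the monotonicity of the logarithmic nonlinearity and the dissipation of the surface diffusion term. First I would substitute $\mathcal{P}_h\Phi_N^m$ into the scheme \eqref{scheme-CH_LOG-1}, \eqref{scheme-mu-0}. A Taylor expansion in time, combined with the spectral accuracy of $\mathcal{P}_N$ (via \eqref{projection-est-0} with $\ell=6$) and the fact that $\Phi_N$ satisfies the PDE in $\mathcal{B}^K$ without aliasing, produces an error equation
$$\frac{\tilde{\phi}^{m+1}-\tilde{\phi}^m}{\dt} = \Delta_h \tilde{\mu}^{m+1} + \tau^{m+1}, \qquad \|\tau^{m+1}\|_{-1,h} \le C(\dt + h^2),$$
with
$$\tilde{\mu}^{m+1} = \bigl[\ln(1+\Phi_N^{m+1})-\ln(1-\Phi_N^{m+1}) - \ln(1+\phi^{m+1})+\ln(1-\phi^{m+1})\bigr] - \theta_0\,\tilde{\phi}^m - \varepsilon^2 \Delta_h \tilde{\phi}^{m+1}.$$
The explicit treatment of the expansive term produces $\theta_0\tilde{\phi}^m$ at the lagged step, which is the one structural feature that makes the analysis nontrivial. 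Mass conservation \eqref{mass conserv-1}, \eqref{mass conserv-2} and the projected initial data \eqref{initial data-0} imply $\overline{\tilde{\phi}^m}=0$ at every step, so $\|\cdot\|_{-1,h}$ is well defined on $\tilde{\phi}^m$.

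Next I would pair the error equation with $\tilde{\phi}^{m+1}$ in $\langle\cdot,\cdot\rangle_{-1,h}$. The left side telescopes as
$$\frac{1}{2\dt}\bigl(\|\tilde{\phi}^{m+1}\|_{-1,h}^2 - \|\tilde{\phi}^m\|_{-1,h}^2\bigr) + \frac{1}{2\dt}\|\tilde{\phi}^{m+1}-\tilde{\phi}^m\|_{-1,h}^2,$$
and the surface diffusion contribution yields the coercive term $\varepsilon^2\|\nabla_h\tilde{\phi}^{m+1}\|_2^2$. The key gain is that the logarithmic part satisfies
$$\bigl\langle \ln(1+\Phi_N^{m+1})-\ln(1-\Phi_N^{m+1}) - \ln(1+\phi^{m+1})+\ln(1-\phi^{m+1}),\,\tilde{\phi}^{m+1}\bigr\rangle_\Omega \ge 0,$$
because $x\mapsto\ln(1+x)-\ln(1-x)$ is monotone increasing on $(-1,1)$; the required pointwise positivity of $\phi^{m+1}$ comes from Theorem~\ref{CH-positivity}, and of $\Phi_N^{m+1}$ from the continuous separation property for $\Phi\in\mathcal{R}$ together with \eqref{projection-est-0} and Sobolev embedding, valid for $h$ sufficiently small. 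The consistency contribution is bounded by $\tfrac{1}{2}\|\tau^{m+1}\|_{-1,h}^2 + \tfrac{1}{2}\|\tilde{\phi}^{m+1}\|_{-1,h}^2$ via Cauchy--Schwarz and Young.

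The main obstacle is the explicit concave term, which yields $\theta_0\langle\tilde{\phi}^m,\tilde{\phi}^{m+1}\rangle_\Omega$, an $L^2$ pairing that does not naturally match the $H^{-1}_h/H^1_h$ duality already at hand. I would absorb it through the interpolation identity
$$\|\psi\|_2^2 = \bigl\langle \nabla_h(-\Delta_h)^{-1}\psi,\,\nabla_h\psi\bigr\rangle_\Omega \le \|\psi\|_{-1,h}\,\|\nabla_h\psi\|_2, \qquad \psi\in\mathring{\mathcal{C}}_{\rm per},$$
followed by Young's inequality applied at exponent tuned to the dissipation, giving
$$\theta_0\bigl|\langle\tilde{\phi}^m,\tilde{\phi}^{m+1}\rangle_\Omega\bigr| \le \frac{\varepsilon^2}{2}\|\nabla_h\tilde{\phi}^{m+1}\|_2^2 + C_\varepsilon\bigl(\|\tilde{\phi}^m\|_{-1,h}^2 + \|\tilde{\phi}^{m+1}\|_{-1,h}^2\bigr),$$
so that half of the coercive dissipation survives. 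Collecting everything, multiplying by $\dt$, summing over $m$ from $0$ to $n-1$, using $\tilde{\phi}^0=0$, and invoking the discrete Gronwall inequality (for $\dt$ small enough that the prefactor of $\|\tilde{\phi}^{n}\|_{-1,h}^2$ on the right can be absorbed on the left) yields the claimed bound \eqref{CH_LOG-convergence-0} with a constant independent of $n$, $\dt$, and $h$. The delicate ingredient is the uniform-in-$m$ pointwise separation of $\Phi_N^m$ from $\pm 1$ that legitimizes the monotonicity step; everything downstream is a standard energy-Gronwall argument.
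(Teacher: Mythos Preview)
Your proposal is correct and follows essentially the same route as the paper: test the error equation in the discrete $H^{-1}_h$ inner product, use monotonicity of the logarithm to discard the nonlinear term, extract $\varepsilon^2\|\nabla_h\tilde{\phi}^{m+1}\|_2^2$ from the surface diffusion, control the explicit concave term by the $H^{-1}/H^1$ duality plus Young, and close with a discrete Gronwall argument.

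The only substantive difference is in how you handle the concave and truncation contributions. The paper bounds the concave term directly via the duality estimate $\langle\tilde{\phi}^n,\tilde{\phi}^{n+1}\rangle_\Omega \le \|\tilde{\phi}^n\|_{-1,h}\,\|\nabla_h\tilde{\phi}^{n+1}\|_2$ and then Young, which produces only $\|\tilde{\phi}^n\|_{-1,h}^2$ on the right; the truncation term is handled the same way. Your version carries an additional $\|\tilde{\phi}^{m+1}\|_{-1,h}^2$ on the right (from the Cauchy--Schwarz bound on the truncation and from your stated form of the concave estimate), which forces a small-$\dt$ absorption step before Gronwall. Both are valid; the paper's treatment is slightly cleaner since it avoids the implicit term altogether and hence needs no extra smallness condition at that point.
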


	\begin{proof}
A careful consistency analysis indicates the following truncation error estimate:
\begin{equation}
   \frac{\Phi_N^{n+1} - \Phi_N^n}{\dt}  = \Delta_h
 \left( \ln (1+\Phi_N^{n+1}) - \ln (1-\Phi_N^{n+1}) - \theta_0 \Phi_N^n - \varepsilon^2 \Delta_h \Phi_N^{n+1}  \right) + \tau^n ,
	\label{CH_LOG-consistency-1}
	\end{equation}
with $\| \tau^n \|_{-1,h} \le C (\dt + h^2)$. Observe that in equation~\eqref{CH_LOG-consistency-1}, and from this point forward, we drop the operator $\mathcal{P}_h$, which should appear in front of $\Phi_N$, for simplicity.

Subtracting the numerical scheme (\ref{scheme-CH_LOG-1}) from (\ref{CH_LOG-consistency-1}) gives
\begin{eqnarray}
   \frac{\tilde{\phi}^{n+1} - \tilde{\phi}^n}{\dt}  &=& \Delta_h
 \Bigl( ( \ln (1+\Phi_N^{n+1}) - \ln (1+\phi^{n+1})) - ( \ln (1-\Phi_N^{n+1}) - \ln (1-\phi^{n+1}))  \nonumber
\\
  &&  \quad
   - \theta_0 \tilde{\phi}^{n+1} - \varepsilon^2 \Delta_h \tilde{\phi}^{n+1}  \Bigr)
    + \tau^n .
	\label{CH_LOG-consistency-2}
	\end{eqnarray}
	
Since the numerical error function has zero-mean, we see that $(-\Delta_h)^{-1} \tilde{\phi}^m$ is well-defined, for any $k \ge 0$. Taking a discrete inner product with (\ref{CH_LOG-consistency-2}) by $2 (-\Delta_h)^{-1} \tilde{\phi}^{n+1}$ yields
	\begin{eqnarray}
\| \tilde{\phi}^{n+1} \|_{-1,h}^2 &-& \| \tilde{\phi}^n \|_{-1,h}^2 + \| \tilde{\phi}^{n+1} - \tilde{\phi}^n \|_{-1,h}^2  - 2 \varepsilon^2 \dt \ciptwo{ \tilde{\phi}^{n+1} }{ \Delta_h \tilde{\phi}^{n+1} }
	\nonumber
	\\
&+&  2 \dt \ciptwo{ \ln (1+\Phi_N^{n+1}) - \ln (1+\phi^{n+1}) }{ \tilde{\phi}^{n+1} }
	\nonumber
	\\
&-& 2 \dt \ciptwo{ \ln (1-\Phi_N^{n+1}) - \ln (1-\phi^{n+1}) }{ \tilde{\phi}^{n+1} }
	\nonumber
	\\
& & \hspace{0.25in}=  2 \theta_0 \dt \ciptwo{ \tilde{\phi}^n }{ \tilde{\phi}^{n+1} }  + 2 \dt \ciptwo{ \tau^n }{ \tilde{\phi}^{n+1} }.
	\label{CH_LOG-convergence-1}
	\end{eqnarray}
The estimate for the term associated with the surface diffusion is straightforward:
\begin{eqnarray}
  - \langle \tilde{\phi}^{n+1} , \Delta_h \tilde{\phi}^{n+1} \rangle  = \| \nabla_h \tilde{\phi}^{n+1} \|_2^2 .  \label{CH_LOG-convergence-2}
\end{eqnarray}
For the nonlinear inner product, the fact that $-1 < \phi^{n+1} < 1$, $-1 < \Phi^{n+1} < 1$ (at a point-wise level) yields the following result:
	\begin{eqnarray}
\ciptwo{ \ln (1+\Phi_N^{n+1}) - \ln (1+\phi^{n+1}) }{ \tilde{\phi}^{n+1} } &\ge& 0 ,
	\label{CH_LOG-convergence-3-1}
	\\
- \ciptwo{ \ln (1-\Phi_N^{n+1}) - \ln (1-\phi^{n+1}) }{ \tilde{\phi}^{n+1} }  &\ge& 0,
	\label{CH_LOG-convergence-3-2}
	\end{eqnarray}
due to the fact that $\ln$ is an increasing function. In other words, the convexity of the nonlinear term plays an essential role in this analysis. For the inner product associated with the concave part, the following estimate is derived:
	\begin{eqnarray}
 2 \theta_0 \ciptwo{ \tilde{\phi}^n }{ \tilde{\phi}^{n+1} } &\le& 2 \theta_0 \| \tilde{\phi}^n \|_{-1,h}  \| \nabla_h \tilde{\phi}^{n+1} \|_2
	\nonumber
	\\
& \le & \theta_0^2 \varepsilon^{-2} \| \tilde{\phi}^n \|_{-1,h}^2  + \varepsilon^2 \| \nabla_h \tilde{\phi}^{n+1} \|_2 .
	\label{CH_LOG-convergence-4}
	\end{eqnarray}
The term associated with the truncation error can be controlled in a standard way:
	\begin{equation}
2 \ciptwo{\tau^n }{ \tilde{\phi}^{n+1} }  \le  2 \| \tau^n \|_{-1,h}  \| \nabla_h \tilde{\phi}^{n+1} \|_2  \le  2 \varepsilon^{-2} \| \tau^n \|_{-1,h}^2 + \frac{\varepsilon^2}{2}  \| \nabla_h \tilde{\phi}^{n+1} \|_2^2 .
	\label{CH_LOG-convergence-5}
	\end{equation}
Using estimates  (\ref{CH_LOG-convergence-2}) -- (\ref{CH_LOG-convergence-5}) in (\ref{CH_LOG-convergence-1}) yields
	\begin{eqnarray}
\| \tilde{\phi}^{n+1} \|_{-1,h}^2 - \| \tilde{\phi}^n \|_{-1,h}^2 & +& \frac{\varepsilon^2}{2} \dt \| \nabla_h \tilde{\phi}^{n+1} \|_2^2
	\nonumber
	\\
&\le& \theta_0^2 \varepsilon^{-2} \dt \| \tilde{\phi}^n \|_{-1,h}^2 + 2 \varepsilon^{-2} \dt \| \tau^n \|_{-1,h}^2  .
	\label{CH_LOG-convergence-6}
	\end{eqnarray}
Finally, an application of a discrete Gronwall inequality results in the desired convergence estimate:
	\begin{equation}
\| \tilde{\phi}^{n+1} \|_{-1,h} + \left( \varepsilon^2 \dt \sum_{k=0}^{n+1} \| \nabla_h \tilde{\phi}^m \|_2^2 \right)^{1/2}  \le C ( \dt + h^2) ,
	\label{CH_LOG-convergence-7}
	\end{equation}
where  $C>0$ is independent of $\dt$, $h$, and $n$. This completes the proof of the theorem.
	\end{proof}
	
\begin{rem}
For the Cahn-Hilliard equation with logarithmic potential, there have been some existing works of error estimate~\cite{barrett95, barrett96, barrett01} in the framework of finite element analysis, with implicit Euler method in the temporal discretization. Again, the time step constraint $\dt \le  \frac{4 \varepsilon^2}{\theta_0^2}$ has to be imposed to ensure the positivity-preserving property of the numerical scheme, while no constraint is needed in the convergence analysis of our proposed scheme.
\end{rem}

\section{The second order numerical scheme} \label{sec:BDF2}

We propose the following second order scheme for the CH equation~\eqref{CH equation-0}-\eqref{CH-mu-0}: given $\phi^n, \phi^{n-1} \in \mathcal{C}_{\rm per}$, find $\phi^{n+1},\mu^{n+1}\in  \mathcal{C}_{\rm per}$, such that
	\begin{equation}
\frac{\frac32 \phi^{n+1} - 2 \phi^n + \frac12 \phi^{n-1}}{\dt}  =  \nabla_h \cdot ( \widehat{\cal M}^{n+1} \nabla_h \mu^{n+1} )  ,
	\label{BDF2-CH_LOG-1}
	\end{equation}
where
	\begin{align} \label{scheme-mu-BDF2-1} 	
\mu^{n+1} = & \ln (1+\phi^{n+1}) - \ln (1-\phi^{n+1}) - \theta_0 \check{\phi}^{n+1} - A \dt \Delta_h ( \phi^{n+1} - \phi^n ) - \varepsilon^2 \Delta_h \phi^{n+1} ,
\\
  \check{\phi}^{n+1} = & 2 \phi^n - \phi^{n-1} ,   \nonumber
	\end{align}
and the discrete mobility function is defined at the face center points in a similar way as in~\eqref{scheme-CH_LOG-mobility-1}: $\widehat{\cal M}_{i+\hf,j,k}^{n+1} = {\cal M}(A_x \check{\phi}^{n+1}_{i+\hf,j,k})$, $\widehat{\cal M}_{i,j+\hf,k}^{n+1} = {\cal M}(A_y \check{\phi}^{n+1}_{i,j+\hf,k})$, $\widehat{\cal M}_{i,j,k+\hf}^{n+1} = {\cal M}(A_z \check{\phi}^{n+1}_{i,j,k+\hf})$.


In the case of constant mobility ${\cal M} (\phi) \equiv 1$, the positivity-preserving property is established in the following theorem.

	\begin{thm}
	\label{CH-BDF2-positivity}
Assume that ${\cal M} (\phi) \equiv 1$. Given $\phi^k \in\mathcal{C}_{\rm per}$, with $\nrm{\phi^k}_\infty \le M$, $k=n, n-1$, for some $M >0$,  and $\left|\overline{\phi^k}\right| = \left|\overline{\phi^{n-1}} \right| < 1$, there exists a unique solution $\phi^{n+1}\in\mathcal{C}_{\rm per}$ to \eqref{BDF2-CH_LOG-1}, with $\phi^{n+1}-\overline{\phi^n}\in\mathring{\mathcal{C}}_{\rm per}$ and  $\nrm{\phi^{n+1}}_\infty < 1$.
	\end{thm}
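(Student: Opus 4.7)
The plan is to mirror the variational argument of Theorem~\ref{CH-positivity}, identifying the BDF2 scheme with the Euler-Lagrange equation of a strictly convex discrete functional and ruling out boundary minimizers by exploiting the singular behavior of the logarithm. I would set $f^n := \frac{1}{3}(4\phi^n - \phi^{n-1})$, so that the BDF2 time-difference becomes $\frac{3}{2\dt}(\phi^{n+1}-f^n)$, and note that $\overline{f^n} = \overline{\phi^n}$ by the hypothesis $\overline{\phi^n}=\overline{\phi^{n-1}}$. I then propose the functional
\begin{align*}
\mathcal{J}^n(\phi) := & \ \tfrac{3}{4\dt}\nrm{\phi - f^n}_{-1,h}^2 + \ciptwo{1+\phi}{\ln(1+\phi)} + \ciptwo{1-\phi}{\ln(1-\phi)}
\\
& + \tfrac{A\dt}{2}\nrm{\nabh(\phi - \phi^n)}_2^2 + \tfrac{\varepsilon^2}{2}\nrm{\nabh \phi}_2^2 - \theta_0 \ciptwo{\check{\phi}^{n+1}}{\phi},
\end{align*}
restricted to $A_h := \{\phi \in \mathcal{C}_{\rm per} : \nrm{\phi}_\infty \le 1,\ \ciptwo{\phi - \overline{\phi^n}}{1}=0\}$. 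A summation-by-parts computation based on Lemma~\ref{lemma1} shows that its Euler-Lagrange condition, modulo the Lagrange multiplier for the mass constraint (which fixes $\overline{\mu^{n+1}}$), is exactly \eqref{BDF2-CH_LOG-1}--\eqref{scheme-mu-BDF2-1}. Strict convexity on $A_h$ is inherited from the logarithmic terms, so uniqueness will follow once existence of an interior minimizer is established.

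Next, I would pass to the mean-zero variable $\varphi := \phi - \overline{\phi^n}$, set $\mathcal{F}^n(\varphi) := \mathcal{J}^n(\varphi+\overline{\phi^n})$, and restrict to the compact convex set $\mathring{A}_{h,\delta}$ of \eqref{CH_LOG-positive-2}, which admits a minimizer $\varphi^\star$. Suppose for contradiction that $\phi^\star := \varphi^\star+\overline{\phi^n}$ attains its global minimum $\delta-1$ at some $\vec{\alpha}_0$ and its global maximum at some $\vec{\alpha}_1$. Taking the directional derivative at $\psi = e_{\vec{\alpha}_0} - e_{\vec{\alpha}_1}$, the logarithmic contribution is bounded above by $\ln\tfrac{\delta}{2-\delta}-\ln\tfrac{1+\overline{\phi^n}}{1-\overline{\phi^n}}$ as in \eqref{CH_LOG-positive-5}; the concave expansive term contributes at most $6M\theta_0$ since $\nrm{\check{\phi}^{n+1}}_\infty \le 3M$; the surface-diffusion piece $-\varepsilon^2(\Delta_h\phi^\star_{\vec{\alpha}_0}-\Delta_h\phi^\star_{\vec{\alpha}_1})$ is $\le 0$ by extremality; and the BDF2 time-stepping piece $\tfrac{3}{2\dt}\bigl[(-\Delta_h)^{-1}(\phi^\star-f^n)_{\vec{\alpha}_0} - (-\Delta_h)^{-1}(\phi^\star-f^n)_{\vec{\alpha}_1}\bigr]$ is controlled by an $h$-independent bound $3C_1'/\dt$ from Lemma~\ref{CH-positivity-Lem-0}, applied to $\phi^\star - f^n \in \mathring{\mathcal{C}}_{\rm per}$ with the enlarged bound $\nrm{f^n}_\infty \le \tfrac{5M}{3}$.

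The genuinely new ingredient, which is the main obstacle, is the Douglas-Dupont regularization $-A\dt\Delta_h(\phi^{n+1}-\phi^n)$. Its contribution to the directional derivative splits as $-A\dt[(\Delta_h\phi^\star)_{\vec{\alpha}_0}-(\Delta_h\phi^\star)_{\vec{\alpha}_1}] + A\dt[(\Delta_h\phi^n)_{\vec{\alpha}_0}-(\Delta_h\phi^n)_{\vec{\alpha}_1}]$. The first bracket is nonpositive thanks to the extremality of $\phi^\star$, so it only helps; the second bracket, however, has no sign or extremal structure and can only be controlled by the crude pointwise estimate $|\Delta_h\phi^n|\le 12M/h^2$ coming from $\nrm{\phi^n}_\infty\le M$, yielding a bound $24 A M\dt/h^2$. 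Collecting all five contributions,
\[
\tfrac{1}{h^3}\,d_s\mathcal{F}^n(\varphi^\star+s\psi)\bigr|_{s=0} \le \ln\tfrac{\delta}{2-\delta} - \ln\tfrac{1+\overline{\phi^n}}{1-\overline{\phi^n}} + C^\star,
\]
where $C^\star := 6M\theta_0 + 24AM\dt/h^2 + 3C_1'/\dt$ is finite for fixed $\dt, h$. For $\delta$ sufficiently small the right side becomes strictly negative, contradicting the minimality of $\varphi^\star$ on the boundary. The symmetric case where $\phi^\star$ saturates the upper value $1-\delta$ is treated by the same device.

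Consequently, any minimizer of $\mathcal{F}^n$ over $\mathring{A}_{h,\delta}$ lies in $(\mathring{A}_{h,\delta})^{\rm o}$, hence is a critical point of $\mathcal{F}^n$ on all of $\mathring{\mathcal{C}}_{\rm per}$; this furnishes a solution of \eqref{BDF2-CH_LOG-1}--\eqref{scheme-mu-BDF2-1} with $\nrm{\phi^{n+1}}_\infty<1$, and strict convexity of $\mathcal{J}^n$ on $A_h$ forces uniqueness. The salient feature (and limitation) is that, just as in Theorem~\ref{CH-positivity}, the threshold on $\delta$ depends on $M$, $\theta_0$, $\dt$, $h$, and $A$, through the Douglas-Dupont contribution $24AM\dt/h^2$; it is a fixed finite constant for fixed mesh data and does not obstruct solvability, but cannot be made mesh-independent without additional ideas.
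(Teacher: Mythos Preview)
Your proposal is correct and follows essentially the same approach as the paper's own proof: the same variational functional (up to an additive constant, since your $\tfrac{A\dt}{2}\nrm{\nabh(\phi-\phi^n)}_2^2$ equals the paper's $\tfrac{A\dt}{2}\nrm{\nabh\phi}_2^2 + A\dt\ciptwo{\phi}{\Delta_h\phi^n}$ plus a $\phi$-independent term), the same boundary-point contradiction argument, and the same crude bound $|\Delta_h\phi^n|\le 12M/h^2$ to control the Douglas--Dupont contribution. Your constants differ only cosmetically from the paper's ($24AM\dt/h^2$ versus the paper's $12M\dt h^{-2}$, etc.), and your remark about the mesh-dependent threshold on $\delta$ matches the paper's Remark following the theorem.
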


	\begin{proof}
We follow the notations in the proof of Theorem~\ref{CH-positivity}. The numerical solution of \eqref{BDF2-CH_LOG-1} is a minimizer of the following discrete energy functional over the admissible set $A_h$:
	\begin{eqnarray}
\mathcal{J}^{n, (2)} (\phi) &:=& \frac{1}{3 \dt} \nrm{ \frac32 \phi - 2 \phi^n + \frac12 \phi^{n-1} }_{-1,h}^2  \nonumber
\\
  &&
 + \ciptwo{ 1+ \phi }{ \ln (1+\phi) } + \ciptwo{ 1-\phi }{  \ln (1-\phi) }  \nonumber
\\
  &&
 + \frac{\varepsilon^2 + A \dt}{2} \| \nabla_h \phi \|_2^2  + \ciptwo{ \phi }{ A\dt \Delta_h \phi^n - \theta_0\check{\phi}^{n+1} } .
	\label{CH_LOG-BDF2-positive-1}
	\end{eqnarray}
Of course, $\mathcal{J}^{n, (2)}$ is strictly convex over $A_h$. Again, such a minimization problem is equivalent to the following transformed functional over $\mathring{A}_h$:
	\begin{eqnarray}
\mathcal{F}^{n, (2)} (\varphi) &:=& \mathcal{J}^{n, (2)} (\varphi + \overline{\phi}_0)
\nonumber
	\\
&=&  \frac{1}{3 \dt} \nrm{ \frac32 (\varphi + \overline{\phi}_0) - 2 \phi^n + \frac12 \phi^{n-1} }_{-1,h}^2  \nonumber
\\
  &&
  + \ciptwo{ 1+ \varphi + \overline{\phi}_0 }{ \ln ( 1 + \varphi + \overline{\phi}_0 )}
    + \ciptwo{1- \varphi - \overline{\phi}_0 }{  \ln (1-\varphi - \overline{\phi}_0)  } \nonumber
\\
  && + \frac{\varepsilon^2 + A \dt}{2} \nrm{ \nabla_h \varphi }_2^2
  + \ciptwo{ \varphi + \overline{\phi}_0 }{ A\dt \Delta_h \phi^n - \theta_0\check{\phi}^{n+1} }  .
	\label{CH_LOG-BDF2-positive-1-b}
	\end{eqnarray}
To obtain the existence of a minimizer for $\mathcal{F}^{n, (2)}$ over $\mathring{A}_h$, we consider the closed domain $\mathring{A}_{h,\delta}$ for $0 < \delta < \frac12$, as defined by~\eqref{CH_LOG-positive-2}.
There exists a (not necessarily  unique) minimizer of $\mathcal{F}^{n, (2)}$ over $\mathring{A}_{h, \delta}$, and we have to prove such a minimizer could not occur on the boundary of $\mathring{A}_{h,\delta}$, if $\delta$ is sufficiently small. 
To get a contradiction, suppose that the minimizer of $\mathcal{F}^{n, (2)}$, call it $\varphi^\star$ occurs at a boundary point of $\mathring{A}_{h,\delta}$.  There is at least one grid point $\vec{\alpha}_0 = (i_0,j_0,k_0)$ such that $|\varphi^\star_{\vec{\alpha}_0}+\overline{\phi}_0| = 1 -\delta$. Similarly, we assume that $\varphi^\star_{\vec{\alpha}_0}+\overline{\phi}_0 = \delta - 1$, so that the grid function $\varphi^\star$ has a global minimum at $\vec{\alpha}_0$, and $\vec{\alpha}_1 = (i_1,j_1,k_1)$ is a grid point at which $\varphi^\star$ achieves its maximum. 
Meanwhile, for all $\psi\in \mathring{\mathcal{C}}_{\rm per}$, the directional derivative becomes
	\begin{align*}
 d_s \mathcal{F}^{n, (2)} (\varphi^\star+s\psi)|_{s=0} = & \  \ciptwo{ \ln (1+\varphi^\star+\overline{\phi}_0) - \ln (1-\varphi^\star-\overline{\phi}_0)}{\psi}
	\\
& +  \ciptwo{ A \dt \Delta_h \phi^n - \theta_0 \check{\phi}^{n+1} }{\psi}
-  (\varepsilon^2 + A \dt )  \ciptwo{ \Delta_h \varphi^\star}{\psi}
	\\
& + \frac{1}{\Delta t}\ciptwo{(-\Delta_h)^{-1}\left( \frac32 ( \varphi^\star +\overline{\phi}_0 ) - 2 \phi^n + \frac12 \phi^{n-1} \right)}{\psi}.
	\end{align*}
In more details, this derivative may be expressed as
	\begin{eqnarray}
\frac{1}{h^3} d_s \mathcal{F}^{n, (2)} (\varphi^\star+s\psi)|_{s=0}  &=&  \ln (1+ \varphi^\star_{\vec{\alpha}_0}+\overline{\phi}_0) - \ln (1-\varphi^\star_{\vec{\alpha}_0}-\overline{\phi}_0)
	\nonumber
	\\
&& - \ln (1+ \varphi^\star_{\vec{\alpha}_1}+\overline{\phi}_0) + \ln (1-\varphi^\star_{\vec{\alpha}_1}-\overline{\phi}_0)
	\nonumber
	\\
&& - \theta_0 ( \check{\phi}^{n+1}_{\vec{\alpha}_0} - \check{\phi}^{n+1}_{\vec{\alpha}_1} ) + A \dt ( \Delta_h \phi^n_{\vec{\alpha}_0} -  \Delta_h \phi^n_{\vec{\alpha}_1} ) \nonumber
\\
&&  - ( \varepsilon^2 + A \dt ) ( \Delta_h \varphi^\star_{\vec{\alpha}_0} -  \Delta_h \varphi^\star_{\vec{\alpha}_1} )
	\nonumber
	\\
&& + \frac{1}{\dt}  (-\Delta_h)^{-1} ( \frac32 ( \varphi^\star +\overline{\phi}_0 ) - 2 \phi^n + \frac12 \phi^{n-1} )_{\vec{\alpha}_0}
	\nonumber
	\\
&& - \frac{1}{\dt}(-\Delta_h)^{-1} ( \frac32 ( \varphi^\star +\overline{\phi}_0 ) - 2 \phi^n + \frac12 \phi^{n-1} )_{\vec{\alpha}_1} .
	\label{CH_LOG-BDF2-positive-4}
	\end{eqnarray}
Furthermore, 
the following estimates are derived
	\begin{eqnarray}
\Delta_h \phi^\star_{\vec{\alpha}_0} &\ge& 0 ,  \quad \Delta_h \phi^\star_{\vec{\alpha}_1} \le 0  , \label{CH_LOG-BDF2-positive-6-1}
\\
-6 M &\le& \check{\phi}^{n+1}_{\vec{\alpha}_0} - \check{\phi}^{n+1}_{\vec{\alpha}_1} \le 6M ,  \label{CH_LOG-BDF2-positive-6-2}
\\
  \Delta_h \phi^n_{\vec{\alpha}_0} &\le& \frac{12 M}{h^2} ,  \quad
 \Delta_h \phi^n_{\vec{\alpha}_1} \ge - \frac{12 M}{h^2} ,
 \label{CH_LOG-BDF2-positive-6-3}
\\
- 5 C_1 &\le& (-\Delta_h)^{-1} ( \frac32 ( \varphi^\star +\overline{\phi}_0 ) - 2 \phi^n + \frac12 \phi^{n-1} )_{\vec{\alpha}_0}   \nonumber
\\
  &&
  - \frac{1}{\dt}(-\Delta_h)^{-1} ( \frac32 ( \varphi^\star +\overline{\phi}_0 ) - 2 \phi^n + \frac12 \phi^{n-1} )_{\vec{\alpha}_1} \le  5 C_1 ,
	\label{CH_LOG-BDF2-positive-6-4}
	\end{eqnarray}
in which we have repeatedly made use of the fact that $\| \phi^k \|_\infty \le M$, $k=n, n-1$, as well as the application of Lemma~\ref{CH-positivity-Lem-0}. Subsequently,  a substitution of (\ref{CH_LOG-BDF2-positive-6-1}) -- (\ref{CH_LOG-BDF2-positive-6-4}) and \eqref{CH_LOG-positive-5} into (\ref{CH_LOG-BDF2-positive-4}) yields the following bound:
	\begin{equation}
\frac{1}{h^3} d_s \mathcal{F}^{n, (2)} (\varphi^\star+s\psi)|_{s=0} \le \ln \frac{\delta}{2 - \delta} - \ln \frac{1+\overline{\phi}_0}{1-\overline{\phi}_0}  + 6 M \theta_0 + 12 M \dt h^{-2} + 10 C_1 \dt^{-1} .
	\label{CH_LOG-BDF2-positive-10}
	\end{equation}
The rest analysis follows the same arguments as in the proof of Theorem~\ref{CH-positivity}; the details are left to interested readers.
\end{proof}

	\begin{rem}
Again, for the second order scheme, a careful calculation implies that  $C_8 = O (\dt^{-1} + \dt h^{-2})$, which becomes singular as $\dt, h\to 0$. Even so, since the values of $h$ and $\dt$ are fixed, a $\delta\in(0,\hf)$ exists so that the size of $C_8$ is not an issue.
	\end{rem}

The non-constant mobility case could be analyzed in the same fashion; we state the result below, and the technical details are left to interested readers.

	\begin{thm}
	\label{CH-mobility-BDF2-positivity}
Assume that ${\cal M} (x) \ge  {\cal M}_0 >0 $, for all $x\in[-1,1]$. Given $\phi^k \in\mathcal{C}_{\rm per}$, with $\nrm{\phi^k} \le M$, $k=n, n-1$, for some $M >0$,  and $\left|\overline{\phi^n}\right| = \left|\overline{\phi^{n-1}}\right| < 1$, there exists a unique solution $\phi^{n+1}\in\mathcal{C}_{\rm per}$ to \eqref{BDF2-CH_LOG-1}, with $\phi^{n+1}-\overline{\phi^n}\in\mathring{\mathcal{C}}_{\rm per}$ and  $\nrm{\phi^{n+1}}_\infty < 1$.
	\end{thm}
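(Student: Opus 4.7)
The plan is to combine the convex-minimization strategy used for Theorem~\ref{CH-BDF2-positivity} with the mobility-weighted inverse operator framework from Theorem~\ref{CH-mobility-positivity}. Since $\check{\phi}^{n+1} = 2\phi^n - \phi^{n-1}$ is known from previous time steps, the face-centered mobility $\widehat{\mathcal{M}}^{n+1}$ is explicit and satisfies $\widehat{\mathcal{M}}^{n+1} \ge \mathcal{M}_0 > 0$ at every face, so $\mathcal{L}_{\widehat{\mathcal{M}}^{n+1}}$ is a symmetric positive definite operator on $\mathring{\mathcal{C}}_{\rm per}$ with a well-defined inverse and norm $\nrm{\,\cdot\,}_{\mathcal{L}_{\widehat{\mathcal{M}}^{n+1}}^{-1}}$.

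First I would observe that the scheme \eqref{BDF2-CH_LOG-1} is the Euler--Lagrange equation for the strictly convex functional
\begin{equation*}
\mathcal{J}^{n,(2)}(\phi) := \frac{1}{3\dt}\nrm{\tfrac{3}{2}\phi - 2\phi^n + \tfrac{1}{2}\phi^{n-1}}_{\mathcal{L}_{\widehat{\mathcal{M}}^{n+1}}^{-1}}^{2} + \ciptwo{1+\phi}{\ln(1+\phi)} + \ciptwo{1-\phi}{\ln(1-\phi)} + \frac{\varepsilon^2 + A\dt}{2}\nrm{\nabla_h\phi}_2^2 + \ciptwo{\phi}{A\dt\Delta_h\phi^n - \theta_0\check{\phi}^{n+1}}
\end{equation*}
on the admissible set $A_h = \{\phi\in\mathcal{C}_{\rm per} : \nrm{\phi}_\infty \le 1,\ \ciptwo{\phi - \overline{\phi}_0}{1} = 0\}$. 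As in the earlier proofs, I would shift by the mean, set $\varphi = \phi - \overline{\phi}_0 \in \mathring{\mathcal{C}}_{\rm per}$, and work with $\mathcal{F}^{n,(2)}(\varphi) := \mathcal{J}^{n,(2)}(\varphi+\overline{\phi}_0)$ on the restricted compact, convex set $\mathring{A}_{h,\delta}$ defined in \eqref{CH_LOG-positive-2}, on which a minimizer $\varphi^\star$ exists for each $\delta\in(0,\hf)$.

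The key step is to rule out boundary minimizers. Suppose $\varphi^\star_{\vec{\alpha}_0} + \overline{\phi}_0 = \delta - 1$ is a global minimum and $\vec{\alpha}_1$ is a global maximum. Taking the direction $\psi_{i,j,k} = \delta_{\vec{\alpha}_0} - \delta_{\vec{\alpha}_1}$, the directional derivative of $\mathcal{F}^{n,(2)}$ at $\varphi^\star$ produces the same logarithmic and $\varepsilon^2 \Delta_h$ contributions as in \eqref{CH_LOG-BDF2-positive-4}, together with the Douglas--Dupont and expansive terms bounded exactly as in \eqref{CH_LOG-BDF2-positive-6-1}--\eqref{CH_LOG-BDF2-positive-6-3} using $\nrm{\phi^k}_\infty \le M$, plus the nonlocal term
\begin{equation*}
\frac{1}{\dt}\Bigl[\mathcal{L}_{\widehat{\mathcal{M}}^{n+1}}^{-1}\bigl(\tfrac{3}{2}(\varphi^\star+\overline{\phi}_0) - 2\phi^n + \tfrac{1}{2}\phi^{n-1}\bigr)_{\vec{\alpha}_0} - \mathcal{L}_{\widehat{\mathcal{M}}^{n+1}}^{-1}(\cdots)_{\vec{\alpha}_1}\Bigr].
\end{equation*}
Since $\varphi^\star + \overline{\phi}_0 \in A_h$ satisfies $\nrm{\varphi^\star+\overline{\phi}_0}_\infty \le 1$ and each $\phi^k$ is bounded by $M$, I would invoke Lemma~\ref{CH-mobility-positivity-Lem-0} on the mean-zero combinations $(\varphi^\star + \overline{\phi}_0) - \phi^n$ and $\phi^n - \phi^{n-1}$ to obtain a uniform bound $|{\cdot}| \le C_4 = C_3\mathcal{M}_0^{-1}h^{-1/2}$ for the $\mathcal{L}_{\widehat{\mathcal{M}}^{n+1}}^{-1}$ image at each grid node. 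Collecting everything gives
\begin{equation*}
\frac{1}{h^3} d_s \mathcal{F}^{n,(2)}(\varphi^\star + s\psi)\Big|_{s=0} \le \ln\frac{\delta}{2-\delta} - \ln\frac{1+\overline{\phi}_0}{1-\overline{\phi}_0} + C_9,
\end{equation*}
where $C_9$ is a finite constant depending on $M$, $\theta_0$, $A$, $\dt$, $h$, $\mathcal{M}_0$, $\Omega$, but crucially independent of $\delta$.

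The logarithmic singularity $\ln\delta \to -\infty$ as $\delta \to 0^+$ then forces the right-hand side to be strictly negative for $\delta$ sufficiently small, contradicting the assumption that $\varphi^\star$ is an interior-pointing minimizer on the $\delta$-face. A symmetric argument (swapping the roles of $\vec{\alpha}_0$ and $\vec{\alpha}_1$) handles the case $\varphi^\star_{\vec{\alpha}_0} + \overline{\phi}_0 = 1-\delta$. Hence the minimizer of $\mathcal{F}^{n,(2)}$ lies in the interior of $\mathring{A}_{h,\delta}$, which means it is a critical point of $\mathcal{F}^{n,(2)}$ on all of $\mathring{A}_h$ and corresponds to a solution of \eqref{BDF2-CH_LOG-1} with $\nrm{\phi^{n+1}}_\infty < 1$. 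Uniqueness follows from strict convexity of $\mathcal{J}^{n,(2)}$ on $A_h$. The main obstacle is the $h^{-1/2}$ blow-up in $C_4$ from Lemma~\ref{CH-mobility-positivity-Lem-0}, together with the $\dt h^{-2}$ scaling from the Douglas--Dupont term; however, for any fixed $h$ and $\dt$ these constants are finite, so the singular nature of $\ln\delta$ dominates and the argument closes.
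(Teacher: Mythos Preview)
Your proposal is correct and follows precisely the approach the paper indicates: the paper itself does not prove this theorem in detail, stating only that ``the non-constant mobility case could be analyzed in the same fashion; \ldots\ the technical details are left to interested readers.'' Your sketch faithfully carries out that combination, replacing the $\nrm{\,\cdot\,}_{-1,h}$ norm in the BDF2 energy functional of Theorem~\ref{CH-BDF2-positivity} by $\nrm{\,\cdot\,}_{\mathcal{L}_{\widehat{\mathcal{M}}^{n+1}}^{-1}}$ and invoking Lemma~\ref{CH-mobility-positivity-Lem-0} in place of Lemma~\ref{CH-positivity-Lem-0} for the nonlocal term, which is exactly what is required.
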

	
In the case of constant mobility ${\cal M} (\phi) \equiv 1$, a modified energy stability is available for the second order BDF scheme~\eqref{BDF2-CH_LOG-1}, provided that $A \ge \frac{1}{16}$. 	

	\begin{thm}
	\label{CH-BDF2-energy stability}
Suppose ${\cal M} (\phi) \equiv 1$. With the same assumptions as in Theorem~\ref{CH-mobility-energy stability}, we have the stability analysis of the following modified energy functional for the proposed numerical scheme \eqref{BDF2-CH_LOG-1}:
\begin{eqnarray}
  &&
   \tilde{E}_h (\phi^{n+1}, \phi^n)  \le  \tilde{E}_h (\phi^n, \phi^{n-1}) ,  \quad
   \mbox{with}  \label{CH-BDF2-stability-0}
\\
  &&
    \tilde{E}_h (\phi^{n+1}, \phi^n) = E_h (\phi^{n+1})
  + \frac{1}{4 \dt} \| \phi^{n+1} - \phi^n \|_{-1,h}^2
  + \frac12 \| \phi^{n+1} - \phi^n \|_2^2  ,   \label{mod energy-BDF2-1}
\end{eqnarray}
for any $\dt, h >0$, provided that $A \ge \frac{1}{16}$.
	\end{thm}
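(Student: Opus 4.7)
The plan is to test the scheme \eqref{BDF2-CH_LOG-1} with $\mu^{n+1}$ in the discrete $L^2$ inner product; Lemma~\ref{lemma1} yields the dissipation identity
\begin{equation*}
\ciptwo{\tfrac{3}{2}\phi^{n+1} - 2\phi^n + \tfrac{1}{2}\phi^{n-1}}{\mu^{n+1}} = -\dt\|\nabla_h \mu^{n+1}\|_2^2 \le 0,
\end{equation*}
so it suffices to show that the left-hand side is bounded below by $\tilde E_h(\phi^{n+1},\phi^n) - \tilde E_h(\phi^n,\phi^{n-1})$. The central algebraic tool is the elementary BDF2 quadratic identity (provable by direct expansion)
\begin{equation*}
\ciptwo{\tfrac{3}{2}a - 2b + \tfrac{1}{2}c}{a} = \tfrac14\bigl(\|a\|_2^2 - \|b\|_2^2\bigr) + \tfrac14\bigl(\|2a-b\|_2^2 - \|2b-c\|_2^2\bigr) + \tfrac14\|a-2b+c\|_2^2,
\end{equation*}
coupled with the splitting $\tfrac{3}{2}\phi^{n+1}-2\phi^n+\tfrac12\phi^{n-1} = \tfrac32(\phi^{n+1}-\phi^n) - \tfrac12(\phi^n-\phi^{n-1})$.

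I would then substitute the definition of $\mu^{n+1}$ and analyze its four contributions in turn. For the surface diffusion $-\varepsilon^2\Delta_h\phi^{n+1}$, summation by parts followed by the BDF2 identity applied to $\nabla_h\phi^{n+1}$ produces a partial telescope of $\|\nabla_h\phi\|_2^2$ plus nonnegative auxiliary quantities. For the extrapolated concave term $-\theta_0\check\phi^{n+1}$, rewriting $\check\phi^{n+1} = \phi^{n+1} - (\phi^{n+1}-2\phi^n+\phi^{n-1})$ and applying the BDF2 identity to the implicit piece yields a partial telescope of $-\tfrac{\theta_0}{2}\|\phi\|_2^2$ plus a quadratic remainder in the second difference. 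For the logarithmic nonlinearity, pointwise convexity of $\Psi(\phi) = (1+\phi)\ln(1+\phi) + (1-\phi)\ln(1-\phi)$ gives $\ciptwo{\phi^{n+1}-\phi^n}{\Psi'(\phi^{n+1})} \ge \ciptwo{\Psi(\phi^{n+1})-\Psi(\phi^n)}{1}$, providing the leading telescope of the logarithmic energy; the backward-difference cross term against $\phi^n - \phi^{n-1}$ needs additional handling via a Young-type absorption. The Douglas-Dupont term $-A\dt\Delta_h(\phi^{n+1}-\phi^n)$ contributes, after summation by parts, a positive bilinear form in $\nabla_h(\phi^{n+1}-\phi^n)$ and $\nabla_h(\phi^n-\phi^{n-1})$. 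The role of the modified-energy corrections $\tfrac{1}{4\dt}\|\phi^{n+1}-\phi^n\|_{-1,h}^2$ and $\tfrac12\|\phi^{n+1}-\phi^n\|_2^2$ is precisely to upgrade the $\tfrac14$ coefficients produced by the BDF2 identity to the full $\tfrac12$ coefficients appearing in $E_h$ and to cancel the auxiliary $\|2a-b\|^2 - \|2b-c\|^2$ pieces; the $-1,h$ contribution has its natural origin in the $H_h^{-1}$ analogue of the BDF2 identity.

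The main obstacle will be the careful bookkeeping of the residual cross terms: the quadratic pieces in $\phi^{n+1}-2\phi^n+\phi^{n-1}$ (contributed by the concave extrapolation error and by the convexity correction for the logarithmic term) and the mixed inner products such as $\ciptwo{\phi^{n+1}-\phi^n}{\phi^n-\phi^{n-1}}$. These must be absorbed into the positive regularization contribution $A\dt\|\nabla_h(\phi^{n+1}-\phi^n)\|_2^2$ through Young's inequality, with a discrete Poincar\'e inequality bridging $L^2$ control of mean-zero differences and $H_h^1$-norm control. The sharp threshold $A\ge\tfrac{1}{16}$ emerges from an optimal choice of Young's constants in these absorption steps; heuristically, the factor $\tfrac{1}{16}=\tfrac14\cdot\tfrac14$ reflects the product of the BDF2 identity coefficient with the coefficient arising from expanding the splitting $\tfrac32(\phi^{n+1}-\phi^n)-\tfrac12(\phi^n-\phi^{n-1})$ against itself via Cauchy-Schwarz, so that the regularization exactly dominates the $\theta_0$-independent part of the extrapolation-induced quadratic error.
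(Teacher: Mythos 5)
Your starting point --- testing the scheme with $\mu^{n+1}$ in the discrete $L^2$ inner product --- is not what the paper does, and it creates a gap that your outline flags but does not close. Pairing the stencil $\frac32\phi^{n+1}-2\phi^n+\frac12\phi^{n-1}=\frac32(\phi^{n+1}-\phi^n)-\frac12(\phi^n-\phi^{n-1})$ against $\mu^{n+1}$ produces the cross term $-\frac12\ciptwo{\phi^n-\phi^{n-1}}{\ln(1+\phi^{n+1})-\ln(1-\phi^{n+1})}$. You propose to handle this ``via a Young-type absorption,'' but any such absorption requires an a priori bound on the singular logarithm in some norm; the solution is only known to satisfy $\nrm{\phi^{n+1}}_\infty<1$, so $\ln(1\pm\phi^{n+1})$ is not uniformly controlled, and neither the modified energy nor the Douglas--Dupont term supplies such a bound. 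This obstruction is precisely what dictates the paper's different choice of test function: the paper takes the inner product of \eqref{BDF2-CH_LOG-1} with $(-\Delta_h)^{-1}(\phi^{n+1}-\phi^n)$, so that every term of $\mu^{n+1}$ --- in particular the logarithm --- is paired only with the single backward difference $\phi^{n+1}-\phi^n$, and pointwise convexity of $(1\pm\phi)\ln(1\pm\phi)$ then yields the full telescope of the logarithmic energy with no leftover cross term. The BDF2 stencil is handled entirely inside the $H_h^{-1}$ inner product, where the elementary estimate $\frac{3}{2\dt}\nrm{\phi^{n+1}-\phi^n}_{-1,h}^2-\frac{1}{2\dt}\cipgen{\phi^{n+1}-\phi^n}{\phi^n-\phi^{n-1}}{-1,h}\ge\frac{1}{\dt}\bigl(\frac54\nrm{\phi^{n+1}-\phi^n}_{-1,h}^2-\frac14\nrm{\phi^n-\phi^{n-1}}_{-1,h}^2\bigr)$ is exactly what generates the $\frac{1}{4\dt}\nrm{\phi^{n+1}-\phi^n}_{-1,h}^2$ correction in $\tilde E_h$; in your $L^2$ framework that term has no natural origin, which is an internal sign that the test function is wrong.

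Your mechanism for the threshold $A\ge\frac1{16}$ is also not the one at work. In the paper it comes from a single Cauchy inequality, $\frac{1}{\dt}\nrm{\phi^{n+1}-\phi^n}_{-1,h}^2+A\dt\nrm{\nabla_h(\phi^{n+1}-\phi^n)}_2^2\ge 2A^{1/2}\nrm{\phi^{n+1}-\phi^n}_2^2$ (using $\nrm{\psi}_2^2\le\nrm{\psi}_{-1,h}\,\nrm{\nabla_h\psi}_2$): one unit of the $H_h^{-1}$ time-difference term plus the entire regularization term is traded for $2A^{1/2}\nrm{\phi^{n+1}-\phi^n}_2^2$, which must supply the $\frac12\nrm{\phi^{n+1}-\phi^n}_2^2$ needed to telescope against the $-\frac12\nrm{\phi^n-\phi^{n-1}}_2^2$ lost in estimating the extrapolated concave term; the condition $2A^{1/2}\ge\frac12$ is exactly $A\ge\frac1{16}$. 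No discrete Poincar\'e inequality is used anywhere. If you redo the argument with the paper's test function, the remaining steps of your outline (convexity for the logarithm, telescoping for the gradient, the sign of the Douglas--Dupont contribution) go through essentially as you describe.
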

	
\begin{proof}
By taking an inner product with~\eqref{BDF2-CH_LOG-1} by $(-\Delta_h)^{-1} (\phi^{n+1} - \phi^n)$, we could derive the following inequalities:
\begin{eqnarray}
  &&
  \left\langle  \frac{\frac32 \phi^{n+1} - 2 \phi^n + \frac12 \phi^{n-1}}{\dt} ,
  (-\Delta_h)^{-1} (\phi^{n+1} - \phi^n)  \right\rangle_\Omega
  \nonumber
\\
  &&  \qquad
  = \frac{3}{2 \dt} \| \phi^{n+1} - \phi^n \| _{-1,h}^2
  - \frac12 \langle \phi^{n+1} - \phi^n , \phi^n - \phi^{n-1}  \rangle_{-1, h} \nonumber
\\
  && \qquad
  \ge \frac{1}{\dt}  \left(
    \frac54 \| \phi^{n+1} - \phi^n \|_{-1, h}^2
  - \frac14 \| \phi^n - \phi^{n-1} \|_{-1, h}^2  \right) ,
    \label{CH-BDF2-stability-1}
\\
  &&
   \left\langle  - \Delta_h ( \ln (1+\phi^{n+1}) )  ,
  (-\Delta_h)^{-1} (\phi^{n+1} - \phi^n)  \right\rangle_\Omega
  = \left\langle  \ln (1+\phi^{n+1})   ,  \phi^{n+1} - \phi^n  \right\rangle_\Omega  \nonumber
\\
  &&  \qquad
  \ge \ciptwo{ 1+ \phi^{n+1} }{ \ln (1+\phi^{n+1}) }
  - \ciptwo{ 1+ \phi^n }{ \ln (1+\phi^n) }   ,
    \label{CH-BDF-stability-2-1}
\\
  &&
   \left\langle    \Delta_h ( \ln (1-\phi^{n+1}) )  ,
  (-\Delta_h)^{-1} (\phi^{n+1} - \phi^n)  \right\rangle_\Omega
  = - \left\langle  \ln (1-\phi^{n+1})   ,  \phi^{n+1} - \phi^n  \right\rangle_\Omega  \nonumber
\\
  &&  \qquad
  \ge - \ciptwo{ 1- \phi^{n+1} }{ \ln (1-\phi^{n+1}) }
  + \ciptwo{ 1- \phi^n }{ \ln (1-\phi^n) }   ,
    \label{CH-BDF-stability-2-2}
\\
  &&
  \left\langle  \Delta_h^2 \phi^{n+1}  ,
  (-\Delta_h)^{-1} (\phi^{n+1} - \phi^n)  \right\rangle_\Omega
  =  \left\langle  \nabla_h \phi^{n+1}  ,
   \nabla_h (\phi^{n+1} - \phi^n)  \right\rangle_\Omega  \nonumber
\\
  &&  \qquad
   =  \frac12 \left(  \| \nabla_h \phi^{n+1} \|_2^2
  - \| \nabla_h \phi^n \|_2^2
  + \| \nabla_h ( \phi^{n+1} - \phi^n ) \|_2^2 \right)  ,
    \label{CH-BDF2-stability-3}
\\
  &&
  \dt \left\langle  \Delta_h^2 ( \phi^{n+1} - \phi^n ) ,
  (-\Delta_h)^{-1} (\phi^{n+1} - \phi^n)  \right\rangle_\Omega
  =  \dt \| \nabla_h ( \phi^{n+1} - \phi^n ) \|_2^2  ,
    \label{CH-BDF2-stability-4}
\\
  &&
  \left\langle  \Delta_h  ( 2 \phi^n - \phi^{n-1})  ,
  (-\Delta_h)^{-1} (\phi^{n+1} - \phi^n)  \right\rangle_\Omega
  = - \left\langle  2 \phi^n - \phi^{n-1}  ,
  \phi^{n+1} - \phi^n)  \right\rangle_\Omega   \nonumber
\\
  &&  \qquad
   \ge - \frac12 \left(  \| \phi^{n+1} \|_2^2 - \| \phi^n \|_2^2   \right)
  - \frac12 \| \phi^n - \phi^{n-1} \|_2^2 ,
    \label{CH-BDF2-stability-5}
\end{eqnarray}
in which~\eqref{CH-BDF-stability-2-1}, \eqref{CH-BDF-stability-2-2} are based on the convexity of $(1 + \phi) \ln (1+ \phi)$, $(1 - \phi) \ln (1- \phi)$, respectively. Meanwhile, an application of Cauchy inequality indicates the following estimate:
\begin{equation}
   \frac{1}{\dt} \| \phi^{n+1} - \phi^n \|_{-1,h}^2
   + A \dt \| \nabla_h ( \phi^{n+1} - \phi^n ) \|_2^2
   \ge 2 A^{1/2}  \| \phi^{n+1} - \phi^n \|_2^2 .
  \label{CH-BDF2-stability-6}
\end{equation}
Therefore, a combination of~\eqref{CH-BDF2-stability-1}-\eqref{CH-BDF2-stability-5} and~\eqref{CH-BDF2-stability-6} yields
\begin{eqnarray}
  &&
  E_h (\phi^{n+1}) - E_h (\phi^n)
  + \frac{1}{4 \dt}  \left(  \| \phi^{n+1} - \phi^n \|_{-1, h}^2
  - \| \phi^n - \phi^{n-1} \|_{-1, h}^2  \right)  \nonumber
\\
  &&
  + \frac12 \left( \| \phi^{n+1} - \phi^n \|_2^2 - \| \phi^n - \phi^{n-1} \|_2^2  \right)
  \le ( - 2 A^{1/2} + \frac12 ) \| \phi^{n+1} - \phi^n \|_2^2  \le 0 ,
  \label{scheme-BDF-stability-7}
\end{eqnarray}
provided that $A \ge \frac{1}{16}$. Therefore, by denoting a modified energy as given by~\eqref{mod energy-BDF2-1}, we get the energy estimate~\eqref{CH-BDF2-stability-0}. This completes the proof of Theorem~\ref{CH-BDF2-energy stability}.
\end{proof}

With the same assumption that ${\cal M} (\phi) \equiv 1$, the convergence result is stated in the following theorem.

	\begin{thm}
	\label{thm:BDF2-convergence}
Given initial data $\Phi(\, \cdot \, ,t=0) \in C^6_{\rm per}(\Omega)$, suppose the exact solution for Cahn-Hilliard equation \eqref{CH equation-0}-\eqref{CH-mu-0} is of regularity class $\mathcal{R}_2 := H^3 \left(0,T; C_{\rm per}(\Omega)\right) \cap H^3 \left(0,T; C^2_{\rm per}(\Omega)\right) \cap L^\infty \left(0,T; C^6_{\rm per}(\Omega)\right)$. Then, provided $\dt$ and $h$ are sufficiently small, for all positive integers $n$, such that $t_n \le T$, we have the following convergence estimate for the numerical solution~\eqref{BDF2-CH_LOG-1}
	\begin{equation}
\| \tilde{\phi}^n \|_{-1,h} +  \left( \varepsilon^2 \dt   \sum_{m=1}^{n} \| \nabla_h \tilde{\phi}^m \|_2^2 \right)^{1/2}  \le C ( \dt^2 + h^2 ),
	\label{CH_LOG-BDF2-convergence-0}
	\end{equation}
where $C>0$ is independent of $n$, $\dt$, and $h$.
	\end{thm}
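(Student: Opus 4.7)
The proof mirrors that of Theorem~\ref{thm:convergence}, upgraded to second order in time. First, substituting the grid-projected exact solution into the scheme~\eqref{BDF2-CH_LOG-1}--\eqref{scheme-mu-BDF2-1} and Taylor-expanding around $t_{n+1}$ produces the truncation identity
\begin{equation*}
\frac{\frac{3}{2} \Phi_N^{n+1} - 2 \Phi_N^n + \frac{1}{2} \Phi_N^{n-1}}{\dt} = \Delta_h \tilde{\mu}_N^{n+1} + \tau^n, \qquad \nrm{\tau^n}_{-1,h} \le C (\dt^2 + h^2),
\end{equation*}
where $\tilde{\mu}_N^{n+1}$ is the discrete chemical potential evaluated at $\Phi_N$. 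The three sources of temporal truncation are each $O(\dt^2)$ under the $\mathcal{R}_2$ regularity: the BDF2 stencil reproduces $\partial_t \Phi_N^{n+1}$ with error $O(\dt^2)$; the explicit extrapolation satisfies $2 \Phi_N^n - \Phi_N^{n-1} = \Phi_N^{n+1} + O(\dt^2)$; and the Douglas--Dupont term $A \dt \Delta_h (\Phi_N^{n+1} - \Phi_N^n)$ is itself $O(\dt^2)$. Subtracting the scheme yields an error equation for $\tilde{\phi}^k := \mathcal{P}_h \Phi_N^k - \phi^k \in \mathring{\mathcal{C}}_{\rm per}$, to which $(-\Delta_h)^{-1}$ may be applied.

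Second, I test the error equation with $2 \dt (-\Delta_h)^{-1} \tilde{\phi}^{n+1}$. The BDF2 identity in the discrete $\nrm{\cdot}_{-1,h}$ inner product,
\begin{equation*}
2 \left\langle \tfrac{3}{2} a - 2 b + \tfrac{1}{2} c, a \right\rangle_{-1,h} = \nrm{a}_{-1,h}^2 + \nrm{2 a - b}_{-1,h}^2 - \nrm{b}_{-1,h}^2 - \nrm{2 b - c}_{-1,h}^2 + \nrm{a - 2 b + c}_{-1,h}^2,
\end{equation*}
delivers on the left the telescoping quantity $\mathcal{E}^{n+1} - \mathcal{E}^n$, where $\mathcal{E}^{n+1} := \nrm{\tilde{\phi}^{n+1}}_{-1,h}^2 + \nrm{2 \tilde{\phi}^{n+1} - \tilde{\phi}^n}_{-1,h}^2$, together with the non-negative residual $\nrm{\tilde{\phi}^{n+1} - 2 \tilde{\phi}^n + \tilde{\phi}^{n-1}}_{-1,h}^2$. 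On the right, the essential observation from the first-order analysis carries through verbatim: the two logarithmic error contributions are non-positive by monotonicity of $\ln$, as in~\eqref{CH_LOG-convergence-3-1}--\eqref{CH_LOG-convergence-3-2}, and may be discarded when passing to an inequality. The surface diffusion contributes $2 \varepsilon^2 \dt \nrm{\nabla_h \tilde{\phi}^{n+1}}_2^2$ on the left; the explicit concave piece $2 \theta_0 \dt \ciptwo{2 \tilde{\phi}^n - \tilde{\phi}^{n-1}}{\tilde{\phi}^{n+1}}$ is controlled via duality and Young's inequality by $C \dt \varepsilon^{-2} ( \nrm{\tilde{\phi}^n}_{-1,h}^2 + \nrm{\tilde{\phi}^{n-1}}_{-1,h}^2 ) + \tfrac{\varepsilon^2}{4} \dt \nrm{\nabla_h \tilde{\phi}^{n+1}}_2^2$; and the Douglas--Dupont term $-2 A \dt^2 \eipvec{\nabla_h (\tilde{\phi}^{n+1} - \tilde{\phi}^n)}{\nabla_h \tilde{\phi}^{n+1}}$ expands as $-A \dt^2 (\nrm{\nabla_h \tilde{\phi}^{n+1}}_2^2 - \nrm{\nabla_h \tilde{\phi}^n}_2^2 + \nrm{\nabla_h (\tilde{\phi}^{n+1} - \tilde{\phi}^n)}_2^2)$, whose first two pieces telescope upon summation and whose third is non-positive and may be dropped. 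The truncation term is handled exactly as in~\eqref{CH_LOG-convergence-5}.

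Summing from $n = 1$ to $N$ telescopes the BDF2 quantity $\mathcal{E}^{n+1} - \mathcal{E}^n$ and the Douglas--Dupont $\nrm{\nabla_h \cdot}_2^2$ contribution, and a discrete Gronwall inequality applied to $\mathcal{E}^{n+1}$ yields
\begin{equation*}
\mathcal{E}^{n+1} + \varepsilon^2 \dt \sum_{m=1}^{n+1} \nrm{\nabla_h \tilde{\phi}^m}_2^2 \le C (\dt^2 + h^2)^2,
\end{equation*}
which implies~\eqref{CH_LOG-BDF2-convergence-0}. The main technical obstacle is the initialization: the BDF2 recursion requires both $\tilde{\phi}^0$ and $\tilde{\phi}^1$ to already satisfy the target bound. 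The projection~\eqref{initial data-0} gives $\tilde{\phi}^0 \equiv 0$, but $\tilde{\phi}^1$ must be generated by a dedicated second-order starting procedure (for example, a Crank--Nicolson half-step, or a predictor built from the first-order scheme~\eqref{scheme-CH_LOG-1} run on refined substeps of size $O(\dt^2)$). Re-verifying the positivity-preserving and energy-stability analyses of Sections~\ref{sec: proof}--\ref{sec:energy stability} for that auxiliary step and certifying $\nrm{\tilde{\phi}^1}_{-1,h} + (\varepsilon^2 \dt)^{1/2} \nrm{\nabla_h \tilde{\phi}^1}_2 \le C (\dt^2 + h^2)$ is the most delicate point; once it is in hand, the BDF2 induction described above closes.
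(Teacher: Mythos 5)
Your proposal follows essentially the same route as the paper's proof: the $O(\dt^2+h^2)$ consistency estimate measured in $\nrm{\cdot}_{-1,h}$, testing the error equation with $2\dt(-\Delta_h)^{-1}\tilde{\phi}^{n+1}$, the telescoping BDF2 identity in the $\langle\cdot,\cdot\rangle_{-1,h}$ inner product, discarding the logarithmic error terms by monotonicity (convexity), Young's inequality for the extrapolated concave term, expansion of the Douglas--Dupont term, and a discrete Gronwall argument (note your stated BDF2 identity should carry a factor $\tfrac12$ on its right-hand side, a harmless slip). Your closing remark about producing $\tilde{\phi}^1$ via a second-order-accurate starting step addresses a point the paper leaves implicit, and a single step of the first-order scheme, whose one-step error is $O(\dt(\dt+h^2))$ in $\nrm{\cdot}_{-1,h}$, already suffices for that purpose.
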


	\begin{proof}
A careful consistency analysis indicates the following truncation error estimate:
\begin{eqnarray}
   \frac{\frac32 \Phi_N^{n+1} - 2 \Phi_N^n + \frac12 \Phi_N^{n-1}}{\dt}  &=& \Delta_h
 \Bigl( \ln (1+\Phi_N^{n+1}) - \ln (1-\Phi_N^{n+1}) - \theta_0 \check{\Phi}_N^{n+1} - \varepsilon^2 \Delta_h \Phi_N^{n+1}  \nonumber
\\
  &&
  - A \dt \Delta_h ( \Phi_N^{n+1} - \Phi_N^n ) \Bigr) + \tau^n ,
	\label{CH_LOG-BDF2-consistency-1}
	\end{eqnarray}
with $\check{\Phi}_N^n = 2 \Phi_N^n - \Phi_N^{n-1}$, $\| \tau^n \|_{-1,h} \le C (\dt^2 + h^2)$. 
In turn, subtracting the numerical scheme~\eqref{BDF2-CH_LOG-1} from~\eqref{CH_LOG-consistency-1} gives
\begin{eqnarray}
   \frac{\frac32 \tilde{\phi}^{n+1} - 2 \tilde{\phi}^n + \frac12 \tilde{\phi}^{n-1}}{\dt}  &=&
   \Delta_h
 \Bigl( ( \ln (1+\Phi_N^{n+1}) - \ln (1+\phi^{n+1}))  \nonumber
\\
  && \quad
  - ( \ln (1-\Phi_N^{n+1}) - \ln (1-\phi^{n+1}))
  - \theta_0 \tilde{\check{\phi}}^{n+1} \nonumber
\\
  &&  \quad
   - \varepsilon^2 \Delta_h \tilde{\phi}^{n+1}
   - A \dt \Delta_h ( \tilde{\phi}^{n+1} - \tilde{\phi}^n ) \Bigr)
    + \tau^n ,
	\label{CH_LOG-BDF2-consistency-2}
	\end{eqnarray}
with $\tilde{\check{\phi}}^{n+1} = 2 \tilde{\phi}^n - \tilde{\phi}^{n-1}$.
Taking a discrete inner product with~\eqref{CH_LOG-BDF2-consistency-2} by $2 (-\Delta_h)^{-1} \tilde{\phi}^{n+1}$ yields
	\begin{eqnarray}
	&&
\left\langle 3 \tilde{\phi}^{n+1} - 4 \tilde{\phi}^n + \tilde{\phi}^{n-1} ,
  \tilde{\phi}^{n+1}  \right\rangle_{-1, h}
  - 2 \varepsilon^2 \dt \ciptwo{ \tilde{\phi}^{n+1} }{ \Delta_h \tilde{\phi}^{n+1} }
	\nonumber
	\\
&+&  2 \dt \ciptwo{ \ln (1+\Phi_N^{n+1}) - \ln (1+\phi^{n+1}) }{ \tilde{\phi}^{n+1} }
\nonumber
\\
  &-&
   2 \dt \ciptwo{ \ln (1-\Phi_N^{n+1}) - \ln (1-\phi^{n+1}) }{ \tilde{\phi}^{n+1} }
	\nonumber
	\\
&-&
 2 A \dt \langle \Delta_h ( \tilde{\phi}^{n+1} - \tilde{\phi}^n ) , \tilde{\phi}^{n+1} \rangle_\Omega
 =  2 \theta_0 \dt \ciptwo{ \tilde{\phi}^n }{ \tilde{\phi}^{n+1} }  + 2 \dt \ciptwo{ \tau^n }{ \tilde{\phi}^{n+1} }.
	\label{CH_LOG-BDF2-convergence-1}
	\end{eqnarray}
For the temporal derivative stencil, the following identity is valid:
      \begin{eqnarray}
      \left\langle 3 \tilde{\phi}^{n+1} - 4 \tilde{\phi}^n + \tilde{\phi}^{n-1} ,
  \tilde{\phi}^{n+1}  \right\rangle_{-1, h}
	&=& \frac12 \Bigl( \| \tilde{\phi}^{n+1} \|_{-1, h}^2 - \| \tilde{\phi}^n \|_{-1, h}^2
	\nonumber
\\
  &&
	+ \| 2 \tilde{\phi}^{n+1} - \tilde{\phi}^n \|_{-1, h}^2
	- \| 2 \tilde{\phi}^n - \tilde{\phi}^{n-1} \|_{-1, h}^2  \nonumber
\\
  &&
  + \| \tilde{\phi}^{n+1}  - 2 \tilde{\phi}^n + \tilde{\phi}^{n-1} \|_{-1, h}^2 \Bigr) .
  \label{CH_LOG-BDF2-convergence-2}
	\end{eqnarray}
The estimates for the terms associated with the surface diffusion, the nonlinear product and the truncation error follow exactly the same way as in~\eqref{CH_LOG-convergence-2}, \eqref{CH_LOG-convergence-3-1}, \eqref{CH_LOG-convergence-3-2}, \eqref{CH_LOG-convergence-5}, respectively. For the concave expansive error term, a similar inequality is available:
	\begin{eqnarray}
 2 \theta_0 \ciptwo{ \tilde{\check{\phi}}^{n+1} }{ \tilde{\phi}^{n+1} } &\le& 2 \theta_0 \| \tilde{\check{\phi}}^{n+1} \|_{-1,h}  \| \nabla_h \tilde{\phi}^{n+1} \|_2
  \le \theta_0^2 \varepsilon^{-2} \| \tilde{\check{\phi}}^{n+1} \|_{-1,h}^2  + \varepsilon^2 \| \nabla_h \tilde{\phi}^{n+1} \|_2
	\nonumber
	\\
& \le & \theta_0^2 \varepsilon^{-2} ( 8 \| \tilde{\phi}^n \|_{-1,h}^2
 + 2  \| \tilde{\phi}^{n-1} \|_{-1,h}^2) + \varepsilon^2 \| \nabla_h \tilde{\phi}^{n+1} \|_2 .
	\label{CH_LOG-BDF2-convergence-4}
	\end{eqnarray}
In addition, the following identity could be derived for the artificial diffusion term:
\begin{eqnarray}
  &&
  - 2 \langle \Delta_h ( \tilde{\phi}^{n+1} - \tilde{\phi}^n ) , \tilde{\phi}^{n+1} \rangle_\Omega
  = 2 \langle \nabla_h ( \tilde{\phi}^{n+1} - \tilde{\phi}^n ) , \nabla_h \tilde{\phi}^{n+1} \rangle_\Omega  \nonumber
\\
  &=&
  \| \nabla_h \tilde{\phi}^{n+1} \|_2^2 - \| \nabla_h \tilde{\phi}^n \|_2^2
  +  \| \nabla_h ( \tilde{\phi}^{n+1} - \| \nabla_h \tilde{\phi}^n ) \|_2^2 .
    \label{CH_LOG-BDF2-convergence-5}
	\end{eqnarray}  	
Subsequently, a substitution of (\ref{CH_LOG-BDF2-convergence-2}) -- (\ref{CH_LOG-BDF2-convergence-5}), \eqref{CH_LOG-convergence-2}, \eqref{CH_LOG-convergence-3-1}, \eqref{CH_LOG-convergence-3-2} and \eqref{CH_LOG-convergence-5} into (\ref{CH_LOG-BDF2-convergence-1}) yields
	\begin{eqnarray}
	&&
\| \tilde{\phi}^{n+1} \|_{-1,h}^2 - \| \tilde{\phi}^n \|_{-1,h}^2
+ \| 2 \tilde{\phi}^{n+1} - \tilde{\phi}^n \|_{-1, h}^2
	- \| 2 \tilde{\phi}^n - \tilde{\phi}^{n-1} \|_{-1, h}^2  \nonumber
\\
&&
+ A \dt ( \| \nabla_h \tilde{\phi}^{n+1} \|_2^2 - \| \nabla_h \tilde{\phi}^n \|_2^2 )
+ \frac{\varepsilon^2}{2} \dt \| \nabla_h \tilde{\phi}^{n+1} \|_2^2
	\nonumber
	\\
&\le& 4 \theta_0^2 \varepsilon^{-2} ( 4 \| \tilde{\phi}^n \|_{-1,h}^2
 +  \| \tilde{\phi}^{n-1} \|_{-1,h}^2) + 4 \varepsilon^{-2} \dt \| \tau^n \|_{-1,h}^2  .
	\label{CH_LOG-BDF2-convergence-6}
	\end{eqnarray}
Finally, an application of a discrete Gronwall inequality results in the desired convergence estimate:
	\begin{equation}
\| \tilde{\phi}^{n+1} \|_{-1,h} + \Bigl( \varepsilon^2 \dt \sum_{k=0}^{n+1} \| \nabla_h \tilde{\phi}^m \|_2^2 \Bigr)^{1/2}  \le C ( \dt^2 + h^2) ,
	\label{CH_LOG-BDF2-convergence-7}
	\end{equation}
where  $C>0$ is independent of $\dt$, $h$, and $n$. This completes the proof of the Theorem~\ref{thm:BDF2-convergence}.
	\end{proof}

	\section{Numerical results}
	\label{sec:numerical results}

In this section we describe a simple multigrid solver for the proposed schemes, and we provided some tests that show the efficiency of the solver and the accuracy of the scheme. We demonstrate, in particular, the positivity of the solutions to the proposed Cahn-Hilliard scheme.

For the discussion of the numerical computations, we use a slightly different formulation of the Cahn-Hilliard equation, one that allows for a comparison with the so-called obstacle potential. Specifically, we will use the standard Ginzburg-Landau free energy $E[\phi] = \int_\Omega \left\{f(\phi) +\frac{\varepsilon^2}{2}|\nabla\phi|^2 \right\} d\mathbf{x}$, where $f(\phi) = f_c(\phi) - f_e(\phi)$ and
	\[
f_c(\phi) = \frac{1}{2\theta_0}\left[(1-\phi)\ln(1-\phi) +(1+\phi)\ln(1+\phi)\right] , \quad f_e(\phi) = \frac{1}{2}(\phi-1)(\phi+1) .
	\]
Importantly, as $\theta_0 \to \infty$, $f$ tends to the obstacle potential
	\[
f_{\rm obs}(\theta) = \left\{
	\begin{array}{ccc}
\frac{1}{2}(\phi-1)(\phi+1) & \mbox{if} & -1 < \phi < 1
	\\
\infty & \mbox{if} & |\phi|\ge 1
	\end{array}
\right.	 ,
	\]
which has been investigated elsewhere~\cite{blowey91,blowey92}. While we are only interested in the case of finite values of $\theta_0$, it is interesting to explore the effects of increasing $\theta_0$. For finite $\theta_0$, clearly $f_e'(\phi) = \phi$ and
	\[
f'_c(\phi) = \frac{1}{2\theta_0} \left[ \ln(1+\phi) - \ln(1-\phi)\right].
	\]
The Cahn-Hillard equation still takes the form \eqref{CH equation-0}, but with the chemical potential expressed as
	\[
 \mu = f_c'(\phi)-f_e'(\phi) - \varepsilon^2\Delta\phi.
	\]
As before, we assume that the mobility satisfies  ${\cal M} (x) \ge \mathcal{M}_0 >0$, for all $x\in[-1,1]$, for some $\mathcal{M}_0$, though as we have remarked, this can be relaxed.
	
	\subsection{Multgrid solver}
	\label{subsec:solver}
	
In this subsection, we describe a nonlinear full approximation storage (FAS) multigrid solver for the convex-concave decomposition scheme for the Cahn-Hilliard equation with logarithmic potential. The solver for the Allen-Cahn equation is simpler, and we omit its description. Our solver is similar in style to the one presented in~\cite{jeong16}, and it can be extended to the case of multi-component systems as in the article. For an alternative approach to the one taken here and in~\cite{jeong16}, see, for example, \cite{graser15}.

For our solver implementation, we  will need to regularize $f'_c$. This is due to the fact that our multigrid solver is not designed to guarantee the boundedness of the solution for arbitrary multigrid iterations, as we discuss below. Our solver will, however, converge to the correct bounded solution, provided the regularization is sufficiently small. We show this in our tests.

To effect the desired regularization, we modify the logarithm as follows: for a given $\delta\in (0,\nicefrac{1}{4})$ we define
	\[
\ln_\delta (\phi) = \left\{
	\begin{array}{ccc}
\ln(\phi) & \mbox{if} & \delta < \phi
	\\
\ln(\delta) + \frac{\phi-\delta}{\delta} & \mbox{if} & \phi \le \delta
	\end{array}
\right.	 .
	\]
The regularized logarithm, $\ln_\delta$ is defined for all values of $\phi$. Using this function, we define
	\[
f'_{c,\delta}(\phi) = \frac{1}{2\theta_0} \left[ \ln_\delta(1+\phi) - \ln_\delta(1-\phi)\right].
	\]
We then observe that $f'_c(\phi) = f'_{c,\delta}(\phi)$, for all $-1+\delta \le \phi \le 1-\delta$. Consequently, we can always take the value of $\delta$ to be small enough such that the theoretical solution to our scheme lies in this range of equivalence.

The first-order convex-concave decomposition (CS1) scheme \eqref{scheme-CH_LOG-1} in 2-D is equivalent to the following: find $\phi, \mu\in\mathcal{C}_{\rm per}$ whose components satisfy
	\begin{eqnarray}
\phi_{i,j} - \dt\, d_x\left({\cal M} \left(A_x\phi^m\right) D_x\mu \right)_{i,j} - \dt\, d_y\left( {\cal M} \left(A_y\phi^m\right) D_y\mu\right)_{i,j} &=& \phi_{i,j}^m  ,
	\label{disc-ch-1}
	\\
\mu_{i,j} - f_{c,\delta}'\left(\phi_{i,j}\right) +\epsilon^2\Delta_h\phi_{i,j} &=& -\phi_{i,j}^m  ,
	\label{disc-ch-2}
	\end{eqnarray}
where we have dropped the time superscripts $m+1$ on the unknowns.  The 3-D equations are similar, and they are omitted for simplicity. For the sake of comparison, the standard backward Euler scheme (BE) is
	\begin{eqnarray}
\phi_{i,j} - \dt\, d_x\left({\cal M} \left(A_x\phi \right) D_x\mu \right)_{i,j} - \dt\, d_y\left( {\cal M} \left(A_y\phi \right) D_y\mu\right)_{i,j} &=& \phi_{i,j}^m  ,
	\label{disc-ch-be-1}
	\\
\mu_{i,j} - f_{c,\delta}'\left(\phi_{i,j}\right) +\phi_{i,j}  +\epsilon^2\Delta_h\phi_{i,j} &=& 0 .
	\label{disc-ch-be-2}
	\end{eqnarray}
We note that, for solvability and stability considerations, the sign of the linear term ($\phi$) in the chemical potential equation \eqref{disc-ch-be-2} is problematical. However, this scheme is solvable with a mild time step restriction.

The energy stable BDF2 (BDF2\_ES) scheme \eqref{BDF2-CH_LOG-1} is expressed in 2D as
	\begin{eqnarray}
\phi_{i,j} - \frac{2\dt}{3}\, d_x\left({\cal M} \left(A_x\check\phi^{m+1}\right) D_x\mu \right)_{i,j} &
	\nonumber
	\\
- \frac{2\dt}{3}\, d_y\left( {\cal M} \left(A_y\check\phi^{m+1}\right) D_y\mu\right)_{i,j} &=& \frac{4}{3}\phi_{i,j}^m - \frac{1}{3}\phi_{i,j}^{m-1}  ,
	\label{disc-ch-bdf2-cs-1}
	\\
\mu_{i,j} - f_{c,\delta}'\left(\phi_{i,j}\right) +\epsilon^2\Delta_h\phi_{i,j}  +A \dt \Delta_h \phi_{i,j} &=& A \dt \Delta_h \phi_{i,j}^m -\check\phi_{i,j}^{m+1}  ,
	\label{disc-ch-bdf2-cs-2}
	\end{eqnarray}
where
	\[
\check{\phi}^{m+1}_{i,j} =  2 \phi^m_{i,j} - \phi^{m-1}_{i,j} .
	\]
The standard BDF2 scheme is
	\begin{eqnarray}
\phi_{i,j} - \frac{2\dt}{3}\, d_x\left({\cal M} \left(A_x \phi \right) D_x\mu \right)_{i,j} &
	\nonumber
	\\
- \frac{2\dt}{3}\, d_y\left( {\cal M} \left(A_y \phi \right) D_y\mu\right)_{i,j} &=& \frac{4}{3}\phi_{i,j}^m - \frac{1}{3}\phi_{i,j}^{m-1}  ,
	\label{disc-ch-bdf2-1}
	\\
\mu_{i,j} - f_{c,\delta}'\left(\phi_{i,j}\right) + \phi_{i,j} +\epsilon^2\Delta_h\phi_{i,j}  &=& 0 .
	\label{disc-ch-bdf2-2}
	\end{eqnarray}
As for the backward Euler scheme, solvability and stability are not unconditionally guaranteed for this scheme.

We use a nonlinear FAS multigrid method to solve all of the schemes efficiently. We give the details only for the (CS1) scheme, equations (\ref{disc-ch-1}) -- (\ref{disc-ch-2}). The details for the other methods are quite similar.  Our solver requires defining operator and source terms, which we do as follows.  Let $\bfphi = \left(\phi,\mu\right)^T$.  Define the nonlinear operator ${\bf N} = (N^{(1)},N^{(2)})^T$ as
	\begin{eqnarray}
N^{(1)}_{i,j}\left(\bfphi \right)  &=& \phi_{i,j} - \dt\, d_x\left(M\left(A_x\phi^m\right) D_x\mu \right)_{i,j} - \dt\, d_y\left(M\left(A_y\phi^m\right) D_y\mu \right)_{i,j} ,
	\label{op-disc-pfc-1}
	\\
N^{(2)}_{i,j}\left(\bfphi \right)  &=& \mu_{i,j} - f_{c,\delta}'\left(\phi_{i,j}\right) +\epsilon^2\Delta_h\phi_{i,j}  ,
	\label{op-disc-pfc-2}
	\end{eqnarray}
and the  source ${\bf S}= (S^{(1)},S^{(2)})^T$ as
	\begin{equation}
S^{(1)}_{i,j}\left(\bfphi \right) = \phi_{i,j} \ , \qquad S^{(2)}_{i,j}\left(\bfphi \right) = - \phi_{i,j}   .
	\label{source-disc-pfc}
	\end{equation}
Then, of course, Equations~(\ref{disc-ch-1}) -- (\ref{disc-ch-2}) are equivalent to ${\bf N}(\bfphi^{m+1}) = {\bf S}(\bfphi^m)$.  Notice that the operator ${\bf N}$ depends upon the time step $m$, because its definition involves the  solution $\phi^m$.

We mention that for the backward Euler (BE) scheme, the only difference in this decomposition is that
	\[
	N^{(2)}_{i,j}\left(\bfphi \right)  = \mu_{i,j} - f_{c,\delta}'\left(\phi_{i,j}\right) + \phi_{i,j} +\epsilon^2\Delta_h\phi_{i,j}  , \quad S^{(2)}_{i,j}\left(\bfphi \right) = 0.
	\]
The BDF2\_ES and BDF2 schemes are handled using similar considerations.

We will describe a somewhat standard nonlinear FAS multigrid scheme for solving the vector equation ${\bf N}(\bfphi^{m+1}) = {\bf S}(\bfphi^m)$.  Here we will sketch only the important points of the algorithm; the reader is referred to Trottenberg \emph{et al.}~\cite[Sec.~5.3]{trottenberg01}  and our paper~\cite{wise10} for complete details.  For this issue, we need to discuss a smoothing operator for generating \emph{smoothed} approximate solutions of  ${\bf N}(\bfphi) = {\bf S}$.  The action of this operator is represented as
	\begin{equation}
\widetilde\bfphi = \mbox{Smooth} \left(\lambda,\bfphi,{\bf N},{\bf S}
\right),
	\label{smooth-operator}
	\end{equation}
where $\bfphi$ is an approximate solution prior to smoothing, $\bar\bfphi$ is the smoothed approximation, and $\lambda$ is the number of smoothing sweeps. For smoothing we use a nonlinear Gauss-Seidel method with Red-Black ordering.  In what follows, to simplify the discussion, we give the details of the relaxation using the simpler lexicographic ordering.  Let $\ell$ be the index for the lexicographic Gauss-Seidel.  (Note that the smoothing index $\ell$ in the following should not be confused with the time step index $m$.)  Now we set
	\begin{eqnarray}
M^{\rm ew}_{i+\hf,j} := \mathcal{M}\left( A_x\phi^m_{i+\hf,j} \right) \ , \ \quad && M^{\rm ns}_{i,j+\hf} := \mathcal{M}\left( A_y\phi^m_{i,j+\hf}\right) \ .
	\nonumber
	\end{eqnarray}
The Gauss-Seidel smoothing is as follows: for every $(i,j)$,  stepping lexicographically from $(1,1)$ to $(N,N)$, find $\phi^{\ell+1}_{i,j}$, and $\mu^{\ell+1}_{i,j}$ that solve
	\begin{eqnarray}
&&\hspace{-.2in} \phi^{\ell+1}_{i,j}+\frac{\dt}{h^2}\left(M_{i+\hf,j}^{\rm ew} + M_{i-\hf,j}^{\rm ew} +M_{i,j+\hf}^{\rm ns} +M_{i,j-\hf}^{\rm ns}\right)\mu^{\ell+1}_{i,j}
	\nonumber
	\\
&=& S^{(1)}_{i,j}\left(\bfphi^m \right)
	\nonumber
	\\
& &+ \frac{\dt}{h^2}\Big(M_{i+\hf,j}^{\rm ew}\mu^{\ell}_{i+1,j} + M_{i-\hf,j}^{\rm ew}\mu^{\ell+1}_{i-1,j}  +M_{i,j+\hf}^{\rm ns}\mu^{\ell}_{i,j+1} +M_{i,j-\hf}^{\rm ns}\mu^{\ell+1}_{i,j-1}\Big) , \quad
	\label{smooth-1}
	\\
&&\hspace{-0.2in}\left(-f_{c,\delta}''\left(\phi^{\ell}_{i,j}\right)-\frac{4\epsilon^2}{h^2}\right)\phi^{\ell+1}_{i,j} + \mu^{\ell+1}_{i,j}
	\nonumber
	\\
&=& S^{(2)}_{i,j}\left(\bfphi^m \right)  + f_{c,\delta}'\left( \phi_{i,j}^\ell\right)- \phi_{i,j}^\ell f_{c,\delta}''\left( \phi_{i,j}^\ell\right)
	\nonumber
	\\
&&-\frac{\epsilon^2}{h^2}\left(\phi_{i+1,j}^{\ell}+\phi_{i-1,j}^{\ell+1}+\phi_{i,j+1}^{\ell}+\phi_{i,j-1}^{\ell+1} \right) .
	\label{smooth-2}
	\end{eqnarray}

Note that we have linearized the logarithmic term using a local Newton approximation, but otherwise this is a standard vector application of block Gauss-Seidel. The $2\times 2$ linear system defined by (\ref{smooth-1}) -- (\ref{smooth-2}) is unconditionally solvable (the determinant of the coefficient matrix is always positive in this case).  We use Cramer's Rule to obtain $\phi^{\ell+1}_{i,j}$ and $\mu^{\ell+1}_{i,j}$. However, we observe that it is not guaranteed that $-1<\phi_{i,j}^{\ell+1}<1$ for an arbitrary smoothing step.

The only difference for the backward Euler (BE) scheme is that second equation \eqref{smooth-2} in the block smoother is replaced by
	\begin{eqnarray*}
&&\hspace{-0.2in}\left(-f_{c,\delta}''\left(\phi^{\ell}_{i,j}\right)-\frac{4\epsilon^2}{h^2}\right)\phi^{\ell+1}_{i,j} + \mu^{\ell+1}_{i,j}
	\nonumber
	\\
&=& S^{(2)}_{i,j}\left(\bfphi^m \right)  + f_{c,\delta}'\left( \phi_{i,j}^\ell\right) - \phi_{i,j}^\ell - \phi_{i,j}^\ell f_{c,\delta}''\left( \phi_{i,j}^\ell\right)
	\nonumber
	\\
&&-\frac{\epsilon^2}{h^2}\left(\phi_{i+1,j}^{\ell}+\phi_{i-1,j}^{\ell+1}+\phi_{i,j+1}^{\ell}+\phi_{i,j-1}^{\ell+1} \right) .
	\end{eqnarray*}	

One full block Gauss-Seidel sweep has concluded when we have stepped lexicographically through all the grid points, from $(1,1)$ to $(N,N)$.  When $\lambda$ full smoothing sweeps has completed the vector result is labeled $\widetilde{\bfphi}$, as in Eq.~(\ref{smooth-operator}), and the action of the smoothing operator in \eqref{smooth-operator} is complete.

Multigrid works on a hierarchy of grids. We denote the grid level by the index $n$, where $n_{\min} \le n \le n_{\max}$, $n_{\max}$ is the index for the finest grid, and $n_{\min}$ is the index for the coarsest grid.  We need operators for communicating information from coarse levels to fine levels, and \emph{vice versa}.  By ${\bf I}_n^{n-1}$ we denote the restriction operator, which transfers fine grid functions, with grid index $n$, to the coarse grid, indexed by $n-1$.  By ${\bf I}_{n-1}^n$ we denote the prolongation operator, which transfers coarse grid functions (level $n-1$) to the fine grid (level $n$).   Here we work on cell-centered grids. The restriction operator is defined by cell-center averaging; for the prolongation operator we use piece-wise constant interpolation \cite[Sec.~2.8.4]{trottenberg01}.  The rest of the details of the nonlinear multigrid solver are similar to those given in~\cite{wise10}. The details are omitted for the sake of brevity.

	\subsection{Regularization of the logarithm in the multigrid solver}
	\label{subsec:delta-parameter}
	
We now give a very brief descussion on how to choose the regularization parameter $\delta$ for the logarithm. The regularization parameter must be chosen small enough so that the computed numerical solutions satisfy $-1+\delta < \phi_{i,j}^m < 1-\delta$, for any $i,j,m$. To understand the issue, consider the following simulation set-up: the common parameters are taken to be $\varepsilon = 5.0\times 10^{-3}$, ${\mathcal M}\equiv 1$, $T=1.0$, $L=1.0$, $h= 1/256$, $\dt = 1.0\times 10^{-3}$, $\tau = 1.0\times 10^{-9}$ (multigrid stopping tolerance). The initial conditions are $\phi_{i,j}^0 = 0.2+r_{i,j}$, where $r_{i,j} \in[-0.05,0.05]$ is a uniformly distributed random variable. We choose various values of the quench parameter $\theta_0$, the smallest being $\theta_0 = 2.0$ and the largest, $\theta_0 = 3.5$. As $\theta_0\to\infty$, the maxima and minima will tend to $1$ and $-1$, respectively, as the singular potential approaches the the obstacle potential.

We compute the maxima and minima of $\phi_{i,j}^m$ and report the values in Table~\ref{tab01}. Observe that for modest values of $\theta_0$, $\delta\in (0,1)$ can always be chosen so that $\nrm{\phi^m}_\infty < 1-\delta$. We point out, in particular, the case for which $\theta_0 = 3.0$. We have taken two different values of $\delta$, $1.0\times 10^{-3}$ and $1.0\times 10^{-5}$. The  computed solutions -- as well as the energies (not shown), which are decreasing at each time step -- for these two cases are the same up to round-off errors.

To be safe, in all of the computed solutions that follow, we use the smaller regularization parameter $\delta = 1.0\times 10^{-5}$. The same considerations are applied when picking the regularization parameter for 3-D simulations. In Figure~\ref{fig01}, we show  spinodal decomposition simulation using the parameters given in the caption. For $\delta$ sufficiently small, the computed solution stays well inside the interval $(-1+\delta, 1-\delta)$.
	
	\begin{figure}[h]
	\begin{center}
\includegraphics[width=\textwidth]{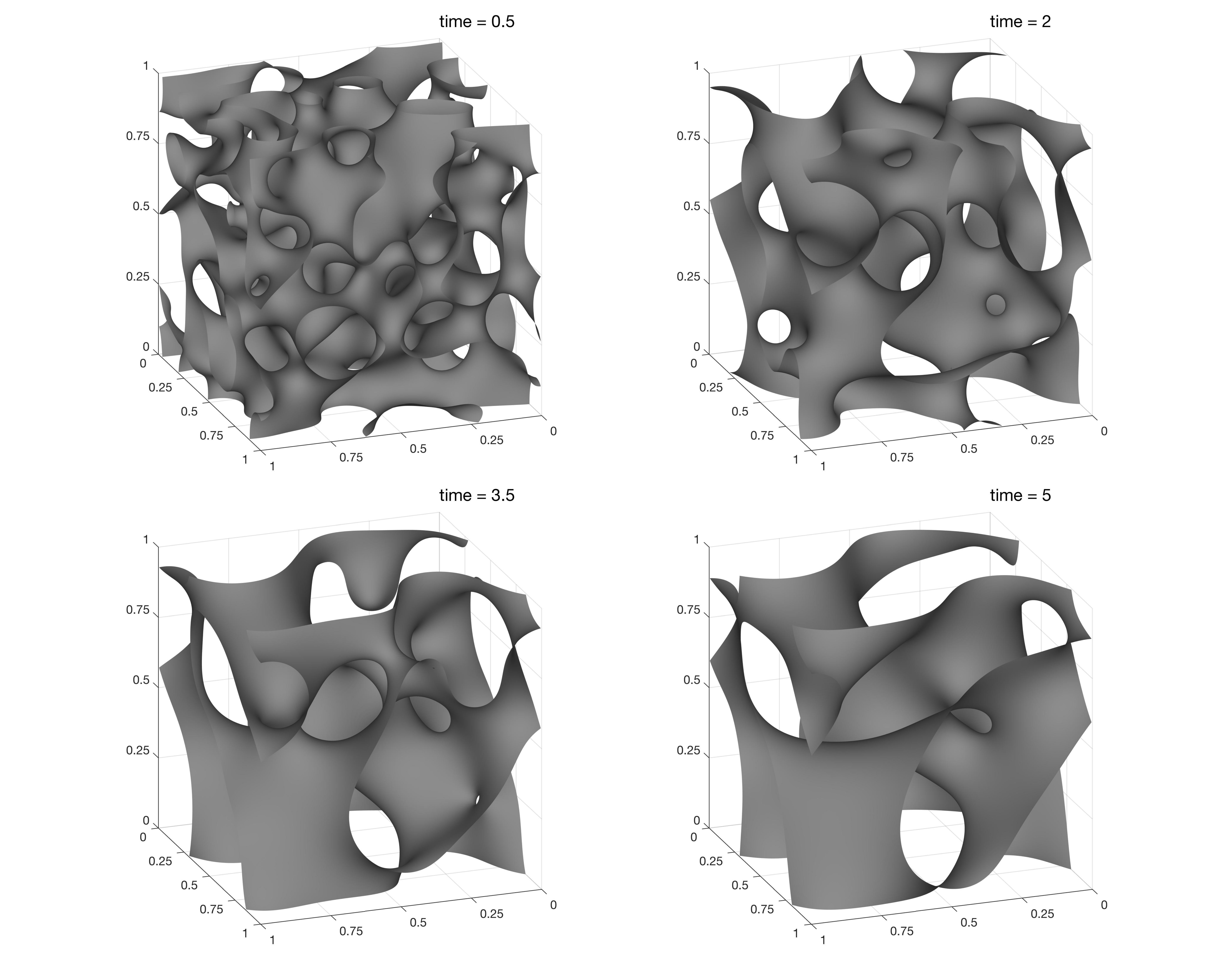}
\caption{Three-dimensional simulation. The parameters are  $L = L_x = L_y = L_z = 1.0$;  $\varepsilon = 5\times 10^{-3}$; $\theta_0 = 3.0$;  $\delta = 10^{-5}$;  $T = 5.0$;  $\tau = 10^{-8}$;  $\dt = 10^{-3}$, $h = \frac{1}{256}$. For initial data, $\phi^0_{i,j,k} = r_{i,j,k}$, where $r_{i,j,k}$ is a uniformly distributed random variable from the interval $[-0.05,0.05]$. The computed solution stays in the interval $[-0.996,0.996]$, well inside the interval $(-1+\delta, 1-\delta)$. This computation is done using the first-order convex-concave decomposition (CS1) scheme.}
	\label{fig01}
	\end{center}
	\end{figure}

	\begin{table}[!htb]
	\begin{center}
\caption{Maximum and minimum values of $\phi_{i,j}^k$ during spinodal decomposition, computed using the first-order convex-concave decomposition (CS1) scheme. The common parameters are $\varepsilon = 5.0\times 10^{-3}$, $T=1.0$, $L=1.0$, $h= 1/256$, $s = 1.0\times 10^{-3}$, $\tau = 1.0\times 10^{-9}$. The initial conditions are $\phi_{i,j}^0 = 0.2+r_{i,j}$, where $r_{i,j} \in[-0.05,0.05]$ is a uniformly distributed random variable. As $\theta_0$ becomes larger, the potential $f$ approaches the so-called obstacle potential, and the maxima and minima approach $+1$ and $-1$, respectively.  But, observe that the computed values stay well within the range $(-1+\delta, 1-\delta)$.}
	\label{tab01}
	\begin{tabular}{ccccc}
	\hline
$\theta_0$ & $\delta$ & $\lambda$ & ${\displaystyle \max_{i,j,k}\phi_{i,j}^k}$ & ${\displaystyle \min_{i,j,k}\phi_{i,j}^k}$
	\\
	\hline
$2.0$&$1.0\times 10^{-3}$&2  &$0.958159539817000$& $-0.969040263101000$
	\\
$2.5$&$1.0\times 10^{-3}$&2 & $0.986118743476000$ & $-0.990903230905000$
	\\
$3.0$ &$1.0\times 10^{-3}$& 2& $0.995203610902000$ & $-0.997255351479000$
	\\
$3.0$ & $1.0\times 10^{-5}$ & 2&$0.995203610606000$ & $-0.997255351459000$
	\\
$3.2$ & $1.0\times 10^{-5}$ &3&$0.996851091247000$ & $-0.998305411243000$
	\\
$3.5$ & $1.0\times 10^{-5}$ &3&$0.998298616883000$ & $-0.999144402772000$
	\\
	\hline
	\end{tabular}
	\end{center}
	\end{table}

	\subsection{Asymptotic ($\dt, h \to 0$) convergence test}
	\label{subsec-convergence}
Here we give a convergence test for the first-order convex-concave decomposition (CS1) scheme  method in 2-D. The initial condition for our convergence test is given by
	\begin{equation}
\phi(x,y,0) = 1.8\left(\frac{1-\cos\left(\frac{4x\pi}{3.2}\right)}{2}\right)  \left(\frac{1-\cos\left(\frac{2 y\pi}{3.2}\right)}{2}\right)-0.9 .
	\label{eqn:init}
	\end{equation}
The other parameters are as follows: (domain size) $L = L_x = L_y = 3.2$; (interfacial parameter) $\varepsilon = 0.2$; (mobility) ${\mathcal M}\equiv 1$; (quench parameter) $\theta_0 = 3.0$; ($\ln$ regularization parameter) $\delta = 1\times 10^{-5}$; (final time) $T = 0.4$; (solver stopping tolerance) $\tau = 10^{-9}$; (refinement path) $\dt = 0.4 h^2$. The test results are given in Table~\ref{tab02} and confirm the predicted accuracy: first order in time and second order in space. The other scheme are expected to exhibit optimal convergence rates, but the tests are not reported here for the sake of brevity.
	
	\begin{table}[!htb]
	\begin{center}
\caption{Errors and convergence rates.  The parameters are (domain size) $L = L_x = L_y = 3.2$; (interfacial parameter) $\varepsilon = 0.2$; (mobility) ${\mathcal M}\equiv 1$; (quench parameter) $\theta_0 = 3.0$; ($\ln$ regularization parameter) $\delta = 10^{-5}$; (final time) $T = 0.4$; (solver stopping tolerance) $\tau = 10^{-9}$; (refinement path) $\dt = 0.4 h^2$. The test results confirm the predicted accuracy: first order in time and second order in space.}
	\label{tab02}
	\begin{tabular}{cccc}
	\hline
$h_c$&$h_{f}$&$\nrm{\delta_\phi}_{2}$ & Rate
	\\
	\hline
$\frac{3.2}{16}$&$\frac{3.2}{32}$& $5.6689\times 10^{-2}$& --
	\\
$\frac{3.2}{32}$&$\frac{3.2}{64}$& $1.6071\times 10^{-2}$ & 1.819
	\\
$\frac{3.2}{64}$ &$\frac{3.2}{128}$& $4.1541\times 10^{-3}$ & 1.952
	\\
$\frac{3.2}{128}$ & $\frac{3.2}{256}$ &$1.0472\times 10^{-3}$ & 1.988
	\\
	\hline
	\end{tabular}
	\end{center}
	\end{table}

    \subsection{Algebraic convergence tests for the multigrid solver}
    \label{subsec-multigrid-convergence}

In this next test, we give some evidence that our multigrid solver for the first-order convex-concave decomposition (CS1) scheme has optimal or nearly optimal complexity. The solvers for the other schemes have similar, near-optimal performance.  We use the same test as in Section~\ref{subsec-convergence}.  The only difference is that for this test, we use a fixed time step size, $\dt=10^{-1}$ for all runs. We plot on a semi-log scale of the residual $\nrm{r^n}_{2}$ with respect to the multigrid iteration count $n$ at the 10th and final time step, \emph{i.e.}, $t=T=1.0$. The initial condition is defined in \eqref{eqn:init}, and the other parameters are as follows:  $L = L_x = L_y = 3.2$;  $\varepsilon = 0.2$; ${\mathcal M}\equiv 1$;   $\delta = 10^{-5}$. The quench parameter is varied, $\theta_0 = 3.5$, 3.0, and 2.0. The number of multigrid smoothing sweeps is held fixed at $\lambda = 2$. The multigrid stopping tolerance is taken to be $\tau = 10^{-9}$. The tests, reported in Figure~\ref{fig02}, indicate that the residual is reduced by nearly the same amount for each multigrid iteration. This is solid evidence for optimal or nearly optimal complexity. We do observe some minor degradation for larger values of $\theta_0$, which is expected, since the problem becomes increasingly stiff for larger values of $\theta_0$. In particular, the potential is approaching the super-singular obstacle potential in this limit.

    \begin{figure}[h]
	\begin{center}
\includegraphics[width=\textwidth]{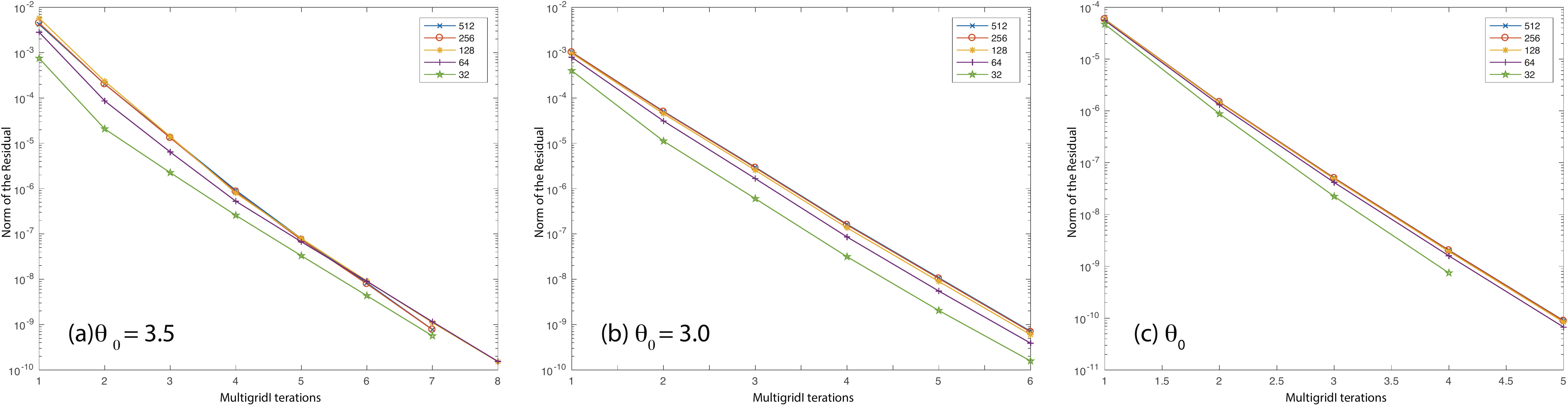}
\caption{Solver convergence (complexity) test for the problem defined in Section~\ref{subsec-convergence}.  We use a fixed time step size, $\dt=10^{-1}$ for all runs. We plot on a semi-log scale of the residual $\nrm{r^n}_{\ell^2}$ with respect to the multigrid iteration count $n$ at the 10th and final time step, \emph{i.e.}, $t=T=1.0$. The initial data is defined in \eqref{eqn:init}, and the other parameters are as follows:  $L = L_x = L_y = 3.2$;  $\varepsilon = 0.2$; ${\mathcal M}\equiv 1$;  $\delta = 10^{-5}$. The quench parameter is varied $\theta_0 = 3.5$, 3.0, and 2.0. The number of multigrid smoothing sweeps is held fixed at $\lambda = 2$. The multigrid stopping tolerance is taken to be $\tau = 10^{-9}$. We observe that the residual is decreasing by a nearly constant factor for each iteration. More iterations are required for larger values of $\theta_0$, as expected.}
	\label{fig02}
	\end{center}
	\end{figure}

	\begin{figure}[!htp]
	\begin{center}
\includegraphics[width=\textwidth]{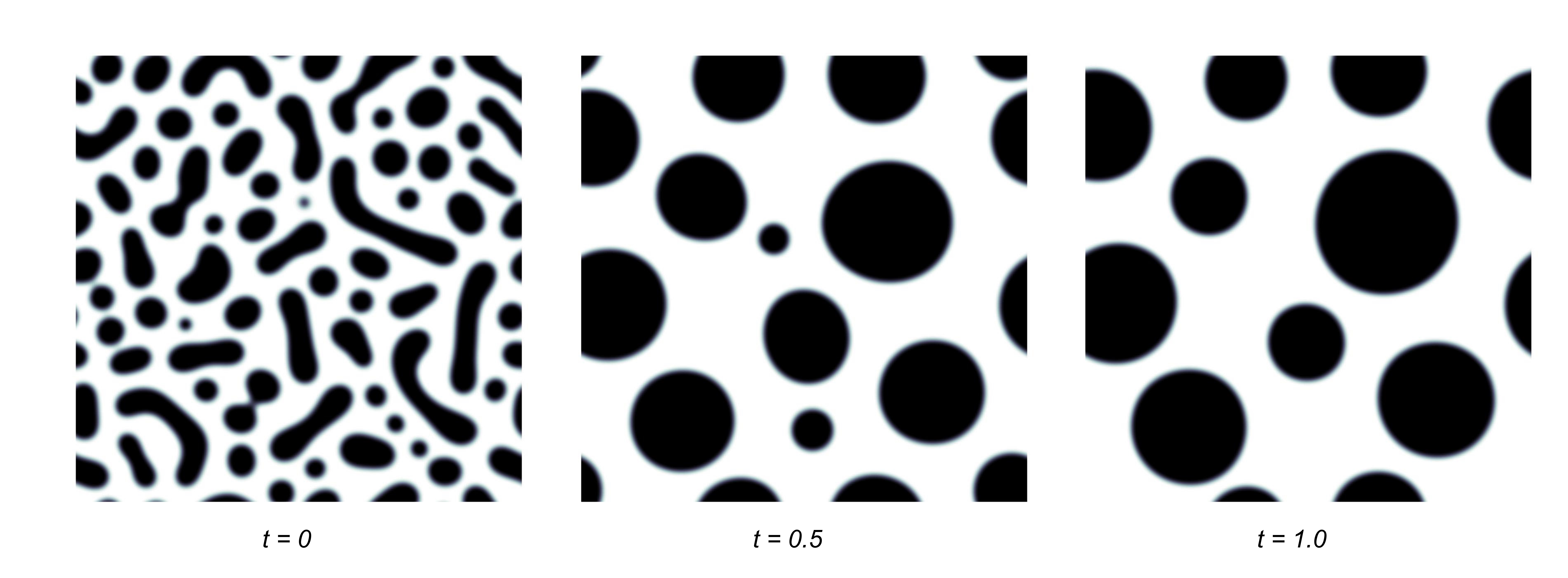}
	\caption{Initial data and high-resolution approximate solutions at $t=0.5$ and $t=1.0$. A high-resolution solution is computed using the BDF2 scheme~\eqref{disc-ch-bdf2-1} -- \eqref{disc-ch-bdf2-2} with the initial data shown in the figure ($t=0$). The parameters for the high-resolution approximation are $\dt = 1.0\times 10^{-5}$ and $h = 1.0/256$. The other parameters are $\Omega = (0,1.0)\times (0,1.0)$ and $\varepsilon = 5.0\times 10^{-3}$, $\theta_0 = 3.0$, $\delta = 1.0\times 10^{-5}$. Significant coarsening occurs between $t=0$ and $t=1.0$. In the simulation, we observe that, for the approximate solution, $0.99672 \ge \phi \ge -0.99821$.}
	\label{fig:compare}
	\end{center}
	\end{figure}

	\begin{table}[!htb]
	\begin{center}
\caption{The errors, average V-cycle iteration numbers for the FAS multigrid solvers, and the maximum values of $\phi$ for the  various schemes with fixed time and space step sizes $\dt = 1.0\times 10^{-4}$ and $h = 1.0/256$. The other parameters are $\Omega = (0,1.0)\times (0,1.0)$ and $\varepsilon = 5.0\times 10^{-3}$, $\theta = 3.0$, $\delta = 1.0\times 10^{-5}$. The ``errors,'' which are reported at times $t=0.1$, $t=0.5$ and $t=1.0$, are precisely the differences between the comparison approximations and the high-resolution target approximation computed using the BDF2 with the much smaller time step size $\dt = 5\times 10^{-6}$. See Figure~\ref{fig:compare}.}
	\label{tab:comparison-1}
	\begin{tabular}{|c|c|c|c|c|c|}
	\hline
Scheme & Error $t=0.1$& Error $t=0.5$ & Error $t=1$ & Ave.~Itr. & $ \displaystyle{\max_{i,j,k}\phi^k_{i,j}}$
	\\
	\hline
BDF2 & $2.2496{\rm e}-04$ & $9.3172{\rm e}-04$ & $5.2566{\rm e}-04$ & 4.6237 & 0.99671
	\\
	\hline
BDF2\_ES & $3.9485{\rm e}-02$ &$1.9105{\rm e}-01$ & $2.6703{\rm e}-01$ & 3.7373 & 0.99890
	\\
	\hline
\parbox{1.0cm}{BDF2\_ES \\ $A=0$} & $5.8446{\rm e}-03$ &$1.9113{\rm e}-02$ & $1.4204{\rm e}-02$ & 3.5390 & 0.99913
	\\
	\hline
BE & $2.7285{\rm e}-03$ & $8.4211{\rm e}-03$ & $5.7435{\rm e}-03$  & 6.5645 & 0.99668
	\\
	\hline
CS1 & $3.5965{\rm e}-01$ & $5.6166{\rm e}-01$ & $7.5356{\rm e}-01$ & 4.0003 & 0.99621
	\\
	\hline
	\end{tabular}
	\end{center}
	\end{table}
	
	\begin{table}[!htb]
	\begin{center}
\caption{The errors, average V-cycle iteration numbers for the FAS multigrid solvers, and the maximum values of $\phi$ for the various scheme with fixed time and space step sizes $\dt = 5.0\times 10^{-5}$ and $h = 1.0/256$. The other parameters are $\Omega = (0,1.0)\times (0,1.0)$ and $\varepsilon = 5.0\times 10^{-3}$, $\theta = 3.0$, $\delta = 1.0\times 10^{-5}$.  The ``errors,'' which are reported at times $t=0.1$, $t=0.5$ and $t=1.0$, are precisely the differences between the comparison approximations and the high-resolution target approximation computed using the BDF2 with the much smaller time step size $\dt = 5\times 10^{-6}$. See Figure~\ref{fig:compare}.}
	\label{tab:comparison-2}
	\begin{tabular}{|c|c|c|c|c|c|}
	\hline
Scheme & Error $t=0.1$& Error $t=0.5$ & Error $t=1$ & Ave.~Itr. & $ \displaystyle{\max_{i,j,k}\phi^k_{i,j}}$
	\\
	\hline
BDF2 & $5.7762{\rm e}-05$ & $2.3749{\rm e}-04$ & $1.3267{\rm e}-04$ & 3.49560 & 0.99671
	\\
	\hline
BDF2\_ES & $1.0079{\rm e}-02$ &$1.1464{\rm e}-02$ &  $7.6568{\rm e}-03$ & 2.70300 & 0.99668
	\\
	\hline
\parbox{1cm}{BDF2\_ES \\ $A=0$} & $1.6392{\rm e}-03$ &$5.1465{\rm e}-03$ & $4.0927{\rm e}-03$ & 2.6401 & 0.99671
	\\
	\hline
BE & $1.3510{\rm e}-03$ & $4.1560{\rm e}-03$ & $2.8182{\rm e}-03$  & 3.63475 & 0.99670
	\\
	\hline
CS1 & $1.4975{\rm e}-01$ & $2.7690{\rm e}-01$ & $3.5650{\rm e}-01$ & 2.78145 & 0.99628
	\\
	\hline
	\end{tabular}
	\end{center}
	\end{table}

	\subsubsection{Initial data and a high-resolution approximate solution at $t=1$}
A high-resolution solution is computed using the BDF2 scheme~\eqref{disc-ch-bdf2-1} -- \eqref{disc-ch-bdf2-2} with the initial data shown in Figure~\ref{fig:compare} ($t=0$). The parameters for the  approximation are $\dt = 5\times 10^{-6}$ and $h = 1.0/256$. The physical parameters are $\Omega = (0,1)^2$,  $\varepsilon = 5.0\times 10^{-3}$, ${\mathcal M}\equiv 1$; $\theta_0 = 3.0$, and $\delta = 1.0\times 10^{-5}$. Note that the time step size $\dt = 5\times 10^{-6}$ is 10 times smaller than what will be used in the comparison tests, and we will treat the approximation obtained here as the target solution. We point out that computing the target solution with the slightly larger time step of $\dt = 1\times 10^{-5}$ does not change the results presented in Tables~\ref{tab:comparison-1} and \ref{tab:comparison-2} in any significant way.

	\subsubsection{Comparison results}
For the comparison computations we use the same parameters as above -- $h = 1.0/256$, $\Omega = (0,1)^2$,  $\varepsilon = 5.0\times 10^{-3}$, ${\mathcal M}\equiv 1$; $\theta_0 = 3.0$, $\delta = 1.0\times 10^{-5}$ --  but we use larger time step sizes: $\dt = 1.0\times 10^{-4}$ (Table~\ref{tab:comparison-1}) and $\dt = 5.0\times 10^{-5}$ (Table~\ref{tab:comparison-2}).  To solve all of the schemes, we employ the FAS multigrid methods detailed above. The results of the tests are reported in Tables~\ref{tab:comparison-1} and \ref{tab:comparison-2}, and they paint a complicated picture. The BDF2 scheme shows excellent accuracy and efficiency. Based on our experience, this method is the most accurate of the four that have been test, which is why it is used to generate our target solution. Our new BDF2\_ES scheme is slightly more efficient, but not nearly as accurate. When the stabilization parameter is set to zero ($A = 0$), its accuracy increases significantly, but its provable stability is lost.

The first-order convex-concave decomposition scheme is the worst in the tests for accuracy, but the second best in efficiency per step. The worst in efficiency per time step is the backward Euler scheme; like the BDF2 scheme, it does not have a convex structure. But, like pure BDF2, the fully implicit backward Euler has very good accuracy, better than the energy stabilized BDF2 scheme with the stabilization parameter set to zero.

All of the schemes are positivity preserving, as long as they are solvable. Even though we did not prove this claim for the fully implicit schemes, such a fact can be established in our theory, though the details are significantly more complicated and are skipped in this presentation.

	\section{Conclusion remarks}
	\label{sec:conclusion}

In this paper we have presented and analyzed two positivity preserving, energy stable finite difference schemes for the Allen Cahn/Cahn-Hilliard model with a logarithmic Flory Huggins energy potential, including both the first and second order temporal accuracy. In particular, the singular nature of the logarithmic term around the values of $-1$ and 1 prevents the numerical solution from reaching these singular values, and this subtle fact indicates that the proposed numerical algorithm has a unique solution with preserved positivity for the logarithmic arguments.  In turn, the numerical scheme is always well-defined, as long as the numerical solution stays bounded at the previous time step, which is natural. And also, an unconditional energy stability has been theoretically justified; in particular, an artificial Douglas-Dupont regularization term is added in the second order BDF scheme to ensure the energy stability. In addition, an optimal rate convergence in the $\ell^\infty (0,T; H_h^{-1}) \cap \ell^2 (0,T; H_h^1)$ norm has been established for both the first and second order accurate schemes. An efficient multigrid solver is applied in the practical implementation, and some numerical results are presented, which demonstrate the robustness and efficiency of the numerical solver.

	\section*{Acknowledgment}
This work is supported in part by the grants NSFC 11671098, 11331004, 91630309, a 111 project B08018 (W.~Chen), NSF DMS-1418689 (C.~Wang), NSF DMS-1715504 and Fudan University start-up (X.~Wang) and  NSF DMS-1719854 (S.~Wise). C.~Wang also thanks the Key Laboratory of Mathematics for Nonlinear Sciences, Fudan University, and Shanghai Center for Mathematical Sciences, for support during his visit. During the finalization of the manuscript, S.~Wise was partially supported by the Techniche Universit\"{a}t, Dresden (TUD), as a senior Dresden Fellow and by Oak Ridge National Laboratory (ORNL) while this work was being completed. S.~Wise thanks TUD and Prof.~Axel Voigt for the generous support and hospitality and thank Cory Hauck (ORNL) for support and discussions on this and related topics.

	\bibliographystyle{plain}
	\bibliography{chlog_3}

\end{document}